\documentclass[pdflatex,sn-mathphys-num]{sn-jnl}

\usepackage{graphicx}%
\usepackage{multirow}%
\usepackage{amsmath,amssymb,amsfonts}%
\usepackage{amsthm}%
\usepackage[title]{appendix}%
\usepackage{xcolor}%
\usepackage{textcomp}%
\usepackage{manyfoot}%
\usepackage{booktabs}%
\usepackage{algorithm}%
\usepackage{algorithmicx}%
\usepackage{algpseudocode}%
\usepackage{listings}%

\theoremstyle{thmstyleone}%
\newtheorem{theorem}{Theorem}
\newtheorem{remark}[theorem]{Remark}%
\newtheorem{corollary}[theorem]{Corollary}
\newtheorem{lemma}[theorem]{Lemma}
\theoremstyle{thmstyletwo}%

\theoremstyle{thmstylethree}%
\begin{document}

\title[Article Title]{New perspectives on operator radius bounds in $A-$weighted frameworks}


\author[1]{\fnm{Bikram} \sur{Das}}\email{dasbikram642@gmail.com}

\author*[2]{\fnm{Chandal} \sur{ Nahak}}\email{cnahak@maths.iitkgp.ac.in}
\equalcont{The author contributed equally to this work.}

\affil[1,2]{\orgdiv{Department of Mathematics}, \orgname{Indian Institute of Technology Kharagpur}, \orgaddress{\city{Kharagpur}, \postcode{721302}, \state{West Bengal}, \country{India}}}


\abstract{By employing the Moore$-$Penrose inverse of a bounded linear operator, we derive several bounds for the numerical radius and operator norms of the sum of operators in semi$-$Hilbertian space that generalize and improve the classical bounds. We establish novel inequalities pertaining to the $\mathbb{A}$$-$Davis$-$Wielandt radius for $n \times n$ operator matrices and further explore their ramifications, particularly concerning $\mathbb{A}$$-$Davis$-$Wielandt radius bounds for $2 \times 2$ operator matrices, where diagonal operator matrix $\mathbb{A}$ contains positive bounded operator $A.$ Ultimately, we get an improved upper bound for the $A$$-$numerical radius inequalities relating to the commutators of operators. }

\keywords{A-numerical radius, A-Davis-Wielandt radius, Moore-Penrose inverse, A-spectral radius, inequality}


\pacs[MSC Classification]{47A05, 47A12, 47A30, 47A63}

\maketitle

\section{Introduction}\label{sec1}

A big part of modern functional analysis is the study of operator inequalities, especially when it comes to understanding the geometry of operators working on Hilbert and semi-Hilbertian spaces. Recently, a lot of work has gone into making better bounds for different types of operator radii, such as the numerical radius, the Davis–Wielandt radius, and their more generalized versions. These quantities not only encapsulate profound spectral information but also function as essential instruments in matrix analysis, perturbation theory, and quantum information science.

The Moore–Penrose inverse of bounded linear operators has created new opportunities for achieving tighter inequalities in the context of semi-Hilbertian spaces created by a positive bounded operator $A.$ Using this method, it is possible to extend the conclusions of classical Hilbert space to a more broad context, where the geometry is governed by the semi-inner product $\langle\cdot,\cdot\rangle_{A}.$

Let $\mathcal{H}$ be a complex Hilbert space endowed with an inner product denoted by $\langle \cdot , \cdot \rangle$ and the associated norm $\| \cdot \|$. An element $z \in \mathcal{H}$ such that $\|z\| = 1$ is a unit vector in the Hilbert space $\mathcal{H}$.  Let $\mathbb{B} (\mathcal{H})$ denote the set of all bounded linear operators on the Hilbert space $\mathcal{H}$. Let $\mathbb{B} (\mathcal{H})^+$ be the collection of all positive operators of $\mathbb{B} (\mathcal{H}),$  that is, 
$\mathbb{B} (\mathcal{H})^+=\{S\in {\mathbb{B} (\mathcal{H})}: \langle Sz,z \rangle \ge 0, \hspace{0.2cm} \forall z\in {\mathcal{H}}\}.$ 
For any $S\in{\mathbb{B} (\mathcal{H})}$ we denote by $\mathcal{R}(S),\mathcal{N}(S)$ and $S^*,$ the range of $S$, the null space of $S$ and the adjoint of $S,$ respectively.

For $S\in{\mathbb{B}(\mathcal{H}) } $, we denote the operator norm of $S$ by  $\|S\|$, that is,  $$\| S\|= \sup\limits_{\substack{z\in{\mathcal{H}} \\ \|z\|=1}}\|Sz\|=\sup\limits_{\substack{z,y\in{\mathcal{H}} \\\|z\|=\|y\|=1}}|\langle S z,y \rangle|.$$
The numerical range of $S\in{\mathbb{B} (\mathcal{H})}$, represented as $W(S)$, is defined by $W(S)=\big\{\langle Sz, z \rangle : z \in {\mathcal{H}}, \|z\|=1\big\}.$ 
The numerical radius of $S\in{\mathbb{B} (\mathcal{H})}$, which is represented by the symbol $\omega(S)$, is defined as $$\omega(S)=\sup\limits_{\substack{z\in{\mathcal{H}} \\\|z\|=1}}\big| \langle Sz,z \rangle\big|.$$

For any $A\in{\mathbb{B} (\mathcal{H})^+},$ it induces a positive semidefinite sesquilinear form : 
$$\langle.,.\rangle_{A}:\mathcal{H}\times\mathcal{H}\to \mathbb{C},\hspace{0.5cm}\langle z,y \rangle_{A}=\langle Az,y \rangle.$$
The semi-inner product $\langle .,. \rangle_{A}$ induces the seminorm $\|.\|_{A},$ that is, $\|z\|_{A}=\sqrt{\langle z,z \rangle _{A}}=\|A^{\frac{1}{2}}z\|$ for all $z \in{\mathcal{H}}.$ Here $A^{\frac{1}{2}}$ is the square root of $A.$ It is seen that $\|z\|_{A}=0$ if and only if $A$ is injective. Further, $(\mathcal{H}, \|.\|_{A})$
 is complete if and only if $\mathcal{R}(A)$ is closed in $\mathcal{H}.$ The $A-$ operator  seminorm of $S\in{\mathbb{B}(\mathcal{H})}$ is defined by $\|S\|_{A}=\sup\{\|Sz\|_{A}:\|z\|_{A}=1\}.$
It should be mentioned here that $\|S\|_{A}=+\infty$ for some  $S\in{\mathbb{B}(\mathcal{H})}.$ We define $\mathbb{B}^{A}(\mathcal{H})$ by $$\mathbb{B}^{A}(\mathcal{H})=\{S\in{\mathbb{B}(\mathcal{H})}:\|S\|_{A}<\infty\}.$$ It is well-known that $\mathbb{B}^{A}(\mathcal{H})$ is not a subalgebra of $\mathbb{B}(\mathcal{H})$ in general and $\|S\|_{A}=0$ if and only if $S^*AS=0.$
An operator $R\in{\mathbb{B}(\mathcal{H})}$ is said to be an $A-$adjoint operator of $S\in{\mathbb{B}(\mathcal{H})}$ if $\langle Sz,y \rangle_{A}=\langle z,Ry\rangle_{A}$ for every $z,y \in{\mathcal{H}},$ that is, $AR=T^*A.$

In general, the existence and uniqueness of an $A-$adjoint operator are not guaranteed. The set of all operators that admit $A-$adjoints is denoted by $\mathbb{B}_{A}(\mathcal{H}).$ 
Also, we denote by $\mathbb{B}_{A^{\frac{1}{2}}}(\mathcal{H}),$ the set of all operators that admit $A^{\frac{1}{2}}-$adjoints. 
The following inclusions hold $$\mathbb{B}_{A}(\mathcal{H})\subseteq \mathbb{B}_{A^{\frac{1}{2}}}(\mathcal{H})\subseteq\mathbb{B}^{A}(\mathcal{H})\subseteq\mathbb{B}(\mathcal{H})$$ with equality if and only if $A$ is one-to-one and has a closed range. We use the notation $\mathbb{ CR} (\mathcal{H})$
for all operators having closed ranges. For
$S\in{\mathbb{ CR} (\mathcal{H})},$ the Moore-Penrose inverse of $S$ is the unique linear mapping $S^\dag:\mathcal{R}(A)\bigoplus\mathcal{R}(A)^{\perp}\to\mathcal{H},$ which follows the four equations:

$(1) \hspace{0.1cm} SS^{\dag}S=S,$ $(2)\hspace{0.1cm} S^{\dag}SS^{\dag}=S^{\dag},$  $(3) \hspace{0.1cm}(SS^{\dag})^*=SS^{\dag},$  $(4) \hspace{0.1cm} (S^{\dag}S)^*=S^{\dag}S$. In general, $S^{\dag}\notin \mathbb{B}(\mathcal{H})$ and $S^{\dag} \in \mathbb{B}(\mathcal{H})$ if and only if $S \in \mathbb{CR}(\mathcal{H}).$ This yields that each matrix has the Moore-Penrose inverse because in case of finite dimensional Hilbert space $\mathcal{H},$ every operator in $\mathbb{B}(\mathcal{H})$ has closed range. One can easily see that:

$(a)\hspace{0.1cm}S=SS^{\dag}S=(SS^{\dag})^{*}S=(S^{\dag})^{*}|S|^2$ and 
$(b)\hspace{0.1cm}S=SS^{\dag}S=S(S^{\dag}S)^{*}=|S^*|^2(S^{\dag})^{*},$ where $|S|=(S^*S)^{\frac{1}{2}}$ and $|S^*|=(SS^*)^{\frac{1}{2}}$. For $S\in \mathbb{B}_{A}(\mathcal{H}),$ the solution of the equation $AY=S^{*}A$ is an $A-$ adjoint operator of $S,$ which is denoted by $S^{*_A}.$ 

Observe that, $S^{*_A}=A^{\dag}S^{*}A$ and the $A-$ adjoint operator $S^{*_A}$ satisfies $AS^{*_A}=S^{*_A}A,$ $\mathcal{R}(S^{*_A})\subseteq \overline{\mathcal{R}(A)}$ and $\mathcal{N}(S^{*_A})=\mathcal{N}(S^{*}A).$ If $S_1,S_2\in {\mathbb{B}_{A}(\mathcal{H})}$ then $\|S_{1}S_{2}\|_{A}\le\|S_{1}\|_{A}\|S_{2}\|_{A}$ and $(S_{1}S_{2})^{*_A}=S_{2}^{*_A}S_{1}^{*_A}$.

For $S\in {\mathbb{B}_{A}(\mathcal{H})},$ we define:
$$\|S\|_{A}=\sup\{|\langle Sz,y \rangle _{A}|: z,y \in {\mathcal{H}}, \|z\|_{A}=\|y\|_{A}=1\}.$$ An operator $S\in{\mathbb{B}(\mathcal{H})}$ is said to be $A-$positive if $AS\in{\mathbb{B}(\mathcal{H})^{+}}.$ Note that if $S$ is $A-$positive, then $\|S\|_{A}=\sup\{\langle Sz,z \rangle_{A}: z\in{\mathcal{H}}, \|z\|_{A}=1\}.$ An operator $S\in{\mathbb{B}(\mathcal{H})}$ is called $A-$selfadjoint if $AS$ is selfadjoint, that is, $AS=S^*A.$  

It is clear that if $S$ is $A-$selfadjoint, then $S\in{\mathbb{B}_{A}(\mathcal{H})}.$ In general, $S=S^{*_A}$ is not always true. Additionally, if $S\in \mathbb{B}_{A}(\mathcal{H}),$ then  $S=S^{*_A}$ if and only if $S$ is $A-$selfadjoint and $\mathcal{R}(S)\subseteq \overline{\mathcal{R}(A)}.$ It is obvious that $S^{*_A}S$ and $SS^{*_A}$ are $A-$
selfadjoint and $A-$positive for $S\in {\mathbb{B}_{A}(\mathcal{H})}$ and $\|S\|^2_{A}=\|S^{*_A}\|^2_{A}=\|SS^{*_A}\|_{A}=\|S^{*_A}S\|_{A}.$

The $A-$numerical radius and the $A-$crawford number of  $S\in{\mathbb{B}(\mathcal{H})}$ are defined by 

\begin{equation*}
    \omega_{A}(S)=\sup\{|\langle Sz,z \rangle_{A}|:z\in{\mathcal{H}},\|z\|_{A}=1\}\end{equation*} and 
\begin{equation*}
    c_{A}(S)=\inf\{|\langle Sz,z \rangle _{A}|:z\in{\mathcal{H}},\|z\|_{A}=1\}.\end{equation*}

The $A-$spectral radius of an operator $S\in {\mathbb{B}_{A}(\mathcal{H})}$ is defined by 
$r_{A}(S)=\lim\limits_{\substack{n\to\infty}}\|S^n\|^{\frac{1}{n}}_{A}.$
Observe that the seminorm $\omega_{A}(.)$ on $\mathbb{B}_{A^{\frac{1}{2}}}(\mathcal{H})$ is equivalent to $\|.\|_{A}$ by the following inequality which is given in [\citenum{baklouti2018joint}]:
\begin{equation}\label{(1)}
    \frac{1}{2}\|S\|_{A}\le\omega_{A}(S)\le\|S\|_{A},\tag{1}\end{equation} where $S\in{\mathbb{B}_{A^{\frac{1}{2}}}(\mathcal{H})}.$
    In 2020, Feki proved in [\citenum{feki2020spectral}, Theorem 3] that if $S\in{\mathbb{B}_{A^{\frac{1}{2}}}(\mathcal{H})},$ then
\begin{equation}\label{2}
  r_{A}(S)\le \omega_{A}(S).  
\tag{2}\end{equation}
Further, in [\citenum{feki2020spectral}, Theorem 7], it is proved that  
\begin{equation}\label{(3)}
 r_{A}(S)\le\omega_{A}(S)\le\frac{1}{2}\left(\|S\|_{A}+\sqrt{\|S^2\|_{A}}\right).  
\tag{3}\end{equation}

It is well-known that if $S\in{\mathbb{B}_{A^{\frac{1}{2}}}(\mathcal{H})}$ is $A-$selfadjoint, then $ r_{A}(S)=\omega_{A}(S)=\|S\|_{A}$ and if $AS^2=0,$ then 
$ \omega_{A}(S)=\frac{1}{2}\|S\|_{A}.$

In [\citenum{bhunia2020inequalities}, Corollary 2.7, \citenum{zamani2019numerical}, Theorems 2.10, 2.12], it is demonstrated that if $S\in{\mathbb{B}_{A}(\mathcal{H})},$ then
\begin{equation}\label{(4)}
\frac{1}{4}\|S^{*_A}S+SS^{*_A}\|_{A}\le\omega^2_{A}(S)\le \frac{1}{2}\|S^{*_A}S+SS^{*_A}\|_{A}  
\tag{4}\end{equation}
Since $\omega_{A}(S^{*_A})=\omega_{A}(S),$ so we can rewrite the inequality (\ref{(4)}) as 
\begin{equation*}\frac{1}{4}\|S^{*_A}(S^{*_A})^{*_A}+(S^{*_A})^{*_A}S^{*_A}\|_{A}\le\omega^2_{A}(S)\le\frac{1}{2}\|S^{*_A}(S^{*_A})^{*_A}+(S^{*_A})^{*_A}S^{*_A}\|_{A}.
\end{equation*}
Recently, Pintu [\citenum{bhunia2025improved}, Corollary 2.1] devloped some norm inequalities for the sum of two operators by using the concept of Moore-Penrose inverse of bounded linear operator with closed range:
\begin{equation}\label{(5)}
\| T_1+T_2\|\le \sqrt{\|T_1^{*}T_1+T_2^{\dag}T_2\|\hspace{0.07cm}\|T_2T_2^{*}+T_1T_1^{\dag}\|}    
\tag{5}\end{equation}
and
\begin{equation}\label{(6)}
\hspace{1.2cm}\| T_1+T_2\|\le \sqrt{\|T_2^{*}T_2+T_1^{\dag}T_1\|\hspace{0.07cm}\|T_1T_1^{*}+T_2T_2^{\dag}\|}    
\hspace{1cm} \forall T_1,T_2\in{\mathbb{CR}(\mathcal{H})}.\tag{6}\end{equation}
Further, in [\citenum{bhunia2025improved}, Corollary 2.2] it is established that
\begin{equation}\label{(7)}
\omega(T_1+T_2)\le \frac{1}{2}\hspace{0.08cm}\|T_1^{*}T_1+T_2^{\dag}T_2+T_2T_2^{*}+T_1T_1^{\dag}\|    
\tag{7}\end{equation}
and
\begin{equation}\label{(8)}
\hspace{1.2cm}\omega(T_1+T_2)\le \frac{1}{2}\hspace{0.07cm}\|T_2^{*}T_2+T_1^{\dag}T_1+T_1T_1^{*}+T_2T_2^{\dag}\|   
\hspace{1cm} \forall T_1,T_2\in{\mathbb{CR}(\mathcal{H})}.\tag{8}\end{equation}
In 2022, Feki [\citenum{feki2022some}, Theorem 2.2] extended a well-established result which is obtained by Kittaneh [\citenum{kittaneh2006spectral}, Theorem 1] :
\begin{equation}\begin{aligned}\label{(9)}\mbox{\footnotesize $r_{A}(A_1B_1+A_2B_2)
\le\frac{\sqrt{(\|B_1A_1\|_{A}-\|B_2A_2\|_{A})^2+4\|B_1A_2\|_{A}\|B_2A_1\|_{A}}+\|B_1A_1\|_{A}+\|B_2A_2\|_{A}}{2},$}\end{aligned}\tag{9}\end{equation}
$\forall A_1,A_2,B_1,B_2\in{\mathbb{B}_{A}(\mathcal{H})}.$
By using the basic result 
\begin{equation}\label{(10)}
|a+b|\le2\int_0^1|sa+(1-s)b|ds\le |a|+|b|\hspace{0.4cm} \forall a,b\in{\mathbb{C}},
\tag{10}\end{equation}
an improvement of the well-known Cauchy-Schwarz inequality is obtained in  [\citenum{sababheh2024operator}, Corollary 2.1] by :

\begin{equation}\label{(11)}
    |\langle z,y \rangle|\le \left(\int_{0}^{1}|se^{i\theta}+(1-s)e^{-i\theta}|ds\right)\hspace{0.05cm}\|z\|\hspace{0.05cm}\|y\|\le\|z\|\hspace{0.05cm}\|y\|,\tag{11}\end{equation}
where
the angle between the non-zero vectors $z$ and $y$ can be defined by $\theta=\angle_{\ll z,y\gg}=\cos^{-1}\left(\frac{|\langle z,y \rangle|}{\|z\|\hspace{0.05cm}\|y\|}\right).$ 
We consider $\lambda(\theta)=\int_{0}^{1}|se^{i\theta}+(1-s)e^{-i\theta}|ds.$

By the calculation, we get $\lambda(\theta)=\frac{1}{4}\left(2+\cos \theta\hspace{0.05cm}\cot \theta\hspace{0.05cm} \log\frac{1+\sin \theta}{1-\sin \theta}\right),$ $\theta \ne n\pi,\hspace{0.04cm} n=0,1,2,\cdots$
We observe that 
\begin{equation*}
    \lambda(\theta)=\begin{cases}
[\frac{1}{2},1] & \text{for } \theta\ge0 \\
1 & \text{for } \theta=0
\end{cases}\end{equation*} and $\lambda(\theta+\pi)=\lambda(\theta).$

In the current year, Bakherad [\citenum{bakherad2025generalized}, Lemma 2.1] extended the inequality (\ref{(11)}):
\begin{equation}\label{(12)}
\mu_{t}(\theta_{ z,y })\hspace{0.07cm}\|z\|_{A}\hspace{0.07cm}\|y\|_{A} \le |\langle z,y \rangle_{A}|\le \delta(\theta_{ z,y })\hspace{0.07cm}\|z\|_{A}\hspace{0.07cm}\|y\|_{A},  
\tag{12}\end{equation}
where

$\theta_{z,y }=\angle_{\ll A^{\frac{1}{2}}z,A^{\frac{1}{2}}y \gg},$ $t\in [0,1],$ $r_{t}=\min\{t,1-t\},$ $\mu_{t}(\phi)=1-\frac{1}{2r_{t}}\Big(1-\sqrt{\cos^2\phi+(2t-1)^2\sin^2\phi}\Big)$ and $\lambda(\phi)=\frac{1}{4}\left(2+\cos \phi\hspace{0.05cm}\cot \phi\hspace{0.05cm} \log\Big(\frac{1+\sin \phi}{1-\sin \phi}\Big)\right).$

Because of the significance of numerical range and numerical radius, many generalizations have been discussed in the literature. One of these interesting generalizations is Davis–Wielandt shell which is described as $$DW(S)=\{(\langle Sz,z \rangle, \langle Sz,Sz \rangle ): z \in {\mathcal{H}}, \|z\|=1\},$$ for $S\in{\mathbb{B}(\mathcal{H})}.$

This concept was firstly mentioned by Davis [\citenum{davis1968shell}] and Wielandt [\citenum{wielandt1955eigenvalues}]. The projection of the set $DW(S)$ onto the first coordinate is evidently the classical numerical range $W(S)$. So, it offers additional information regarding $S$ and $W(S)$.

One can define the Davis-Wielandt radius of $S\in{\mathbb{B}(\mathcal{H})}$
by $$d\omega(S)=\sup\limits_{\substack{z\in{\mathcal{H}} \\ \|z\|=1}}\left\{\sqrt{|\langle Sz,z \rangle|^2+\|Sz\|^4}\right\}.$$

The $A-$ Davis-Wielandt radius of $S\in{\mathbb{B}_{A^{\frac{1}{2}}}(\mathcal{H})}$ is defined by 
$$d\omega_{A}(S)=\sup\limits_{\substack{z\in{\mathcal{H}} \\ \|z\|_{A}=1}}\left\{\sqrt{|\langle Sz,z \rangle_{A}|^2+\|Sz\|_{A}^4}\right\}.$$

\subsection{Organization of the paper}
The structure of this paper is as follows:

We present generalized and improved inequalities for the numerical radius and operator norms of the sum of operators in semi-Hilbertian space in Section \ref{sec2}. Additionally, this section leads to certain numerical radius inequalities and offers improvements on well-known operator norm  and numerical radius inequalities (\ref{(1)}), ((\ref{(4)}) - (\ref{(8)})).

In Section \ref{sec3}, we build new inequalities of $A-$Davis-Wielandt radius inequalities for $n\times n$ operator matrices and go deeper into their implications when the study turns to  $A-$Davis-Wielandt radius  bounds, specifically for $2\times 2$ operator matrices. When dealing with continuous functions and positive operators, this section helps to provide generalized $A-$numerical radius inequalities.

In Section \ref{sec4}, in order to find the $A-$ spectral radius bound for single operator in terms of its Moore-Penrose inverse, we prove a new inequality for the sum of products of operators, which is crucial. Finally, we obtain  refined upper bound for the  $A-$numerical radius inequalities of commutators of operators.

\section{Extension and improvement of inequalities (\ref{(1)}), ((\ref{(4)}) - (\ref{(8)}))}\label{sec2}
We begin this section with the following lemma which shows an inner product inequality relating the Moore-Penrose inverse of $S\in{\mathbb{CR}(\mathcal{H})}.$

\begin{lemma} \label{Lemma 2.1}
[\citenum{sababheh2024numerical}, Theorem 2.1]
Let $S\in{\mathbb{CR}(\mathcal{H})}.$ Then for any $z,y\in{\mathcal{H}},$
\begin{equation*}|\langle Sz,y \rangle|\le \sqrt{\langle S^{*}Sz,z \rangle\hspace{0.07cm}\langle SS^{\dag}y,y \rangle}.\end{equation*}
\end{lemma}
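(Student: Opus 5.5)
The plan is to rewrite $Sz$ using the Moore--Penrose identities and then apply the Cauchy--Schwarz inequality once. Since $S\in\mathbb{CR}(\mathcal{H})$, the Moore--Penrose inverse $S^{\dag}$ is bounded. By the defining relations, $P:=SS^{\dag}$ is a bounded selfadjoint idempotent: $P^*=P$ by (3), and $P^2=SS^{\dag}SS^{\dag}=SS^{\dag}$ by (1). Hence $P$ is the orthogonal projection onto $\mathcal{R}(S)$; in particular $P=P^*P$ is positive.

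The key step is the factorization $S=SS^{\dag}S=PS$, which is identity (a) of the introduction. For any $z,y\in\mathcal{H}$ it gives
\begin{equation*}
\langle Sz,y\rangle=\langle PSz,y\rangle=\langle Sz,P^*y\rangle=\langle Sz,Py\rangle .
\end{equation*}
The Cauchy--Schwarz inequality in $\mathcal{H}$ then yields
\begin{equation*}
|\langle Sz,y\rangle|\le \|Sz\|\,\|Py\| .
\end{equation*}

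It remains to identify the two factors:
\begin{equation*}
\|Sz\|^2=\langle S^*Sz,z\rangle, \qquad \|Py\|^2=\langle P^*Py,y\rangle=\langle P^2y,y\rangle=\langle SS^{\dag}y,y\rangle .
\end{equation*}
Taking square roots gives the stated inequality.

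There is no genuine obstacle here. The only point needing care is to justify that $SS^{\dag}$ is an orthogonal projection, so that $\|SS^{\dag}y\|^2=\langle SS^{\dag}y,y\rangle$. This uses both the selfadjointness condition (3) and the idempotence coming from (1), together with the closed-range hypothesis, which guarantees that $S^{\dag}$ is bounded and everywhere defined. Without closed range, $SS^{\dag}$ would not be a bounded operator on all of $\mathcal{H}$.
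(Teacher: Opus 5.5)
Your argument is correct. You write $S=(SS^{\dag})S$, move the selfadjoint idempotent $SS^{\dag}$ across the inner product, and then apply Cauchy--Schwarz together with $\|SS^{\dag}y\|^2=\langle SS^{\dag}y,y\rangle$. The paper only cites this lemma, but its proof of Lemma~\ref{Lemma 2.2} (inequality (\ref{(13)})) follows the same route in the $A$-weighted setting, with the refined Cauchy--Schwarz inequality (\ref{(11)}) in place of the plain one, so your proof is essentially the $A=I$ case of that argument.
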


The following inequalities relate the $A-$inner product $\langle Sz,y \rangle_{A}$ according to the geometric angle formed by the appropriate $A-$weighted operator images of $z$ and $y.$ Sharper operator inequalities will be derived with the help of this geometric formulation. First, we prove the following lemma:

\begin{lemma} \label{Lemma 2.2}
Let  $S\in{\mathbb{B}_{A}(\mathcal{H})}.$ Then for any $z,y\in{\mathcal{H}},$
\begin{equation}\label{(13)}
|\langle Sz,y \rangle_{A}|\le\lambda(\theta_1)\|Sz\|_{A}\hspace{0.05cm}\|(SS^{\dag})^{*_A}y \|_{A}   
\tag{13}\end{equation}
and
\begin{equation}\label{(14)}
|\langle Sz,y \rangle_{A}|\le\lambda(\theta_2)\|S^{\dag}Sz \|_{A}\hspace{0.05cm}\|S^{*_A}y\|_{A},\tag{14}\end{equation}

where

\mbox{\footnotesize$\text{
$\theta_1=\angle_{\ll A^{\frac{1}{2}}Sz,A^{\frac{1}{2}}(SS^{\dag})^{*_A}y  \gg},$ $\theta_2=\angle_{\ll  A^{\frac{1}{2}}(S^{\dag}S)z,A^{\frac{1}{2}}S^{*_A}y \gg}$ 
and $\lambda(\phi)=\frac{2+\cos \phi\hspace{0.05cm}\cot \phi\hspace{0.05cm} \log\Big(\frac{1+\sin \phi}{1-\sin \phi}\Big)}{4}.$} $}

\end{lemma}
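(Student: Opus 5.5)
The plan is to move $S$ onto the vector $z$ or onto $y$ by means of the Moore--Penrose identities, and then apply the angle-refined Cauchy--Schwarz inequality to the resulting $A$-inner product. The refinement needed is the upper bound in (\ref{(11)}), or equivalently the right half of (\ref{(12)}) written with $\lambda$. Concretely, for $u,v\in\mathcal{H}$ we take $u'=A^{\frac12}u$ and $v'=A^{\frac12}v$ in the Hilbert space $\mathcal{H}$. Then (\ref{(11)}) gives
\[
|\langle u,v\rangle_A|=|\langle u',v'\rangle|\le \lambda(\theta)\,\|u\|_A\,\|v\|_A ,\qquad \theta=\angle_{\ll A^{\frac12}u,A^{\frac12}v\gg}.
\]
Degenerate cases, where $A^{\frac12}u=0$ or $A^{\frac12}v=0$, are trivial since both sides vanish. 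Throughout we assume that $S$ has closed range, so that $S^{\dag}\in\mathbb{B}(\mathcal{H})$. We also need $SS^{\dag}$ and $S^{\dag}S$ to admit $A$-adjoints; this is implicit in the statement, because the $A$-adjoints $(SS^{\dag})^{*_A}$ and $S^{*_A}$ appear there.

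For (\ref{(13)}), we use $S=SS^{\dag}S=(SS^{\dag})S$. This gives
\[
\langle Sz,y\rangle_A=\langle (SS^{\dag})Sz,y\rangle_A=\langle Sz,(SS^{\dag})^{*_A}y\rangle_A,
\]
where the last equality is the defining property of the $A$-adjoint. Applying the refined inequality with $u=Sz$ and $v=(SS^{\dag})^{*_A}y$ yields exactly $\lambda(\theta_1)\|Sz\|_A\|(SS^{\dag})^{*_A}y\|_A$, with $\theta_1$ as stated.

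For (\ref{(14)}), we use $S=S(S^{\dag}S)$. This gives
\[
\langle Sz,y\rangle_A=\langle S(S^{\dag}Sz),y\rangle_A=\langle S^{\dag}Sz,S^{*_A}y\rangle_A.
\]
Applying the same inequality with $u=S^{\dag}Sz$ and $v=S^{*_A}y$ gives the bound $\lambda(\theta_2)\|S^{\dag}Sz\|_A\|S^{*_A}y\|_A$.

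The only delicate step is justifying the adjoint transfers. The first transfer requires $SS^{\dag}\in\mathbb{B}_A(\mathcal{H})$, which we take as part of the hypothesis, as noted above. The second requires only $S\in\mathbb{B}_A(\mathcal{H})$, which is assumed. A second point is that the angle in (\ref{(11)}) is defined through $|\langle\cdot,\cdot\rangle|$, so $\theta\in[0,\pi/2]$ and $\lambda$ is evaluated consistently with its definition. When $\theta=0$ the formula for $\lambda$ is read as its limiting value $\lambda(0)=1$, and the bound is then just Cauchy--Schwarz.
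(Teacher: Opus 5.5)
Your proof is correct and follows essentially the same route as the paper. The paper also inserts $SS^{\dag}$ (respectively $S^{\dag}S$), moves it across via the $A$-adjoint, rewrites the result as an ordinary inner product of $A^{\frac12}$-images, and then applies (\ref{(11)}). Your remarks that $S$ must have closed range and that $SS^{\dag}$ must lie in $\mathbb{B}_A(\mathcal{H})$ for $(SS^{\dag})^{*_A}$ to exist state hypotheses the paper only assumes implicitly.
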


\begin{proof}
For any  $z,y\in{\mathcal{H}},$ we have
\begin{equation}
\begin{aligned}&
|\langle Sz,y \rangle_{A}|=|\langle (SS^{\dag})Sz,y \rangle_{A}|\\&\hspace{1.4cm}= |\langle Sz,(SS^{\dag})^{*_A}y \rangle_{A}|  \\&\hspace{1.4cm}= |\langle ASz,(SS^{\dag})^{*_A}y \rangle| \\&\hspace{1.4cm}= |\langle A^{\frac{1}{2}}Sz,A^{\frac{1}{2}}(SS^{\dag})^{*_A}y \rangle| \\&\hspace{1.4cm}\le \lambda(\theta_1)\|A^{\frac{1}{2}}Sz\|\hspace{0.05cm}\|A^{\frac{1}{2}}(SS^{\dag})^{*_A}y \| \hspace{0.3cm} (\text{by (\ref{(11)}})  )\\&\hspace{1.4cm}= \lambda(\theta_1)\|Sz\|_{A}\hspace{0.05cm}\|(SS^{\dag})^{*_A}y \|_{A}.
\end{aligned} \notag   
\end{equation}
Also, we have \begin{equation}
\begin{aligned}&
|\langle Sz,y \rangle_{A}|=|\langle S(S^{\dag}S)z,y \rangle_{A}|\\&\hspace{1.4cm}= |\langle (S^{\dag}S)z,S^{*_A}y \rangle_{A}|  \\&\hspace{1.4cm}= |\langle A^{\frac{1}{2}}(S^{\dag}S)z,A^{\frac{1}{2}}S^{*_A}y \rangle| \\&\hspace{1.4cm}\le \lambda(\theta_2)\|A^{\frac{1}{2}}(S^{\dag}S)z \|\hspace{0.05cm}  \|A^{\frac{1}{2}}S^{*_A}y\|\hspace{0.3cm}(\text{by (\ref{(11)}})  )\\&\hspace{1.4cm}= \lambda(\theta_2)\|S^{\dag}Sz \|_{A}\hspace{0.05cm}\|S^{*_A}y\|_{A}.
\end{aligned} \notag   
\end{equation}
This completes the proof.
\end{proof}

The following theorem establishes a unified upper bound for the $A-$inner product, which involves the sum of two operators $S_1$ and $S_2.$

\begin{theorem}\label{Theorem 2.3}
Let $S_1,S_2\in {\mathbb{B}_{A}(\mathcal{H})}$ with closed ranges, and let $z,y \in {\mathcal{H}}$. Then

\begin{equation} \label{(15)}
 \begin{aligned}& \mbox{\footnotesize $|\langle (S_1+S_2)z,y \rangle_{A}|\le \lambda(\theta)\sqrt{\Big\langle\Big(S_1^{*_A}S_1+(S_2^{\dag}S_2)^{*_A}S_2^{\dag}S_2\Big)z,z \Big\rangle_{A}\hspace{0.05 cm}\Big\langle\Big( S_1 S_1^{\dag}(S_1 S_1^{\dag})^{*_A}+S_2 S_2^{*_A}\Big)y,y \Big\rangle_{A}}$}\end{aligned} \tag{15}  
\end{equation}
and
\begin{equation}\label{(16)}
\begin{aligned}
&\mbox{\footnotesize$ |\langle (S_1+S_2)z,y \rangle_{A}|\le \lambda(\phi) \sqrt{\Big\langle\Big(S_2^{*_A}S_2+(S_1^{\dag}S_1)^{*_A}S_1^{\dag}S_1\Big)z,z \Big\rangle_{A}\hspace{0.05 cm}\Big\langle\Big( S_2 S_2^{\dag}(S_2 S_2^{\dag})^{*_A}+S_1 S_1^{*_A}\Big)y,y \Big\rangle}_{A},$}\end{aligned}\tag{16}\end{equation} where

\mbox{\footnotesize$\text{ $\lambda(\theta)=\max\big\{\lambda(\theta_1),\lambda(\theta_2)\big\},$ $\theta_1=\angle_{\ll A^{\frac{1}{2}}S_1 z,\hspace{0.07 cm}A^{\frac{1}{2}}(S_1 S_1^{\dag})^{*_A}y  \gg},$  $\theta_2=\angle_{\ll A^{\frac{1}{2}}S_2^{\dag}S_2z,\hspace{0.07 cm} A^{\frac{1}{2}}S_2^{*_A}y \gg}$},$} 

and

\mbox{\footnotesize$\text{$\lambda(\phi)=\max\big\{\lambda(\theta_3),\lambda(\theta_4)\big\},$ $\theta_3=\angle_{\ll A^{\frac{1}{2}}S_2 z,\hspace{0.07 cm} A^{\frac{1}{2}}(S_2 S_2^{\dag})^{*_A}y  \gg},$ $\theta_4=\angle_{\ll A^{\frac{1}{2}}S_1^{\dag}S_1z,\hspace{0.07 cm}A^{\frac{1}{2}}S_1^{*_A}y\gg}$ }.$}
\end{theorem}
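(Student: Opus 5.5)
The plan is to split the inner product by the triangle inequality, bound each piece with Lemma \ref{Lemma 2.2}, and then merge the two bounds with the scalar Cauchy--Schwarz inequality in $\mathbb{R}^2$. To begin, write
\[
|\langle (S_1+S_2)z,y\rangle_A|\le |\langle S_1z,y\rangle_A|+|\langle S_2z,y\rangle_A|.
\]
Apply inequality (\ref{(13)}) of Lemma \ref{Lemma 2.2} to $S_1$ and inequality (\ref{(14)}) to $S_2$. This gives
\[
|\langle (S_1+S_2)z,y\rangle_A|\le \lambda(\theta_1)\|S_1z\|_A\,\|(S_1S_1^{\dag})^{*_A}y\|_A+\lambda(\theta_2)\|S_2^{\dag}S_2z\|_A\,\|S_2^{*_A}y\|_A,
\]
where the angles $\theta_1,\theta_2$ are exactly those in the statement. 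Bounding both $\lambda$'s by $\lambda(\theta)=\max\{\lambda(\theta_1),\lambda(\theta_2)\}$ and using $ab+cd\le\sqrt{a^2+c^2}\sqrt{b^2+d^2}$, we obtain
\[
\lambda(\theta)\sqrt{\|S_1z\|_A^2+\|S_2^{\dag}S_2z\|_A^2}\;\sqrt{\|(S_1S_1^{\dag})^{*_A}y\|_A^2+\|S_2^{*_A}y\|_A^2}.
\]

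Next, rewrite each squared seminorm as an $A$-inner product via $\|Tx\|_A^2=\langle Tx,Tx\rangle_A=\langle T^{*_A}Tx,x\rangle_A$. This identity holds for $T\in\mathbb{B}_A(\mathcal{H})$ because $\langle Tx,Tx\rangle_A=\langle ATx,Tx\rangle=\langle T^*ATx,x\rangle=\langle AT^{*_A}Tx,x\rangle$.
\begin{itemize}
\item $\|S_1z\|_A^2=\langle S_1^{*_A}S_1z,z\rangle_A$.
\item $\|S_2^{\dag}S_2z\|_A^2=\langle (S_2^{\dag}S_2)^{*_A}S_2^{\dag}S_2z,z\rangle_A$.
\item $\|S_2^{*_A}y\|_A^2=\langle (S_2^{*_A})^{*_A}S_2^{*_A}y,y\rangle_A$, which should be replaced by $\langle S_2S_2^{*_A}y,y\rangle_A$.
\item $\|(S_1S_1^{\dag})^{*_A}y\|_A^2=\langle ((S_1S_1^{\dag})^{*_A})^{*_A}(S_1S_1^{\dag})^{*_A}y,y\rangle_A$, which should be replaced by $\langle S_1S_1^{\dag}(S_1S_1^{\dag})^{*_A}y,y\rangle_A$.
\end{itemize}
Adding the $z$-terms and the $y$-terms then gives (\ref{(15)}). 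Inequality (\ref{(16)}) follows symmetrically: apply (\ref{(13)}) to $S_2$ and (\ref{(14)}) to $S_1$, with angles $\theta_3,\theta_4$. Interchanging the roles of $S_1$ and $S_2$ in the argument gives it directly.

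The main obstacle is justifying the last two replacements, since $(T^{*_A})^{*_A}\neq T$ in general. For the double adjoint I would not compute it. Instead I would work directly:
\[
\|T^{*_A}y\|_A^2=\langle AT^{*_A}y,T^{*_A}y\rangle=\langle T^*Ay,T^{*_A}y\rangle=\langle Ay,TT^{*_A}y\rangle=\langle TT^{*_A}y,y\rangle_A,
\]
using $AT^{*_A}=T^*A$ and the fact that $\langle TT^{*_A}y,y\rangle_A$ is real. This only requires $T\in\mathbb{B}_A(\mathcal{H})$, and it works for both $T=S_2$ and $T=S_1S_1^{\dag}$.

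For $T=S_1S_1^{\dag}$ (and likewise $S_2^{\dag}S_2$) one must also check that the operator admits an $A$-adjoint. Here I would assume, as the statement implicitly does, that $S_i^{\dag}\in\mathbb{B}_A(\mathcal{H})$, so that products stay in $\mathbb{B}_A(\mathcal{H})$. The closed-range hypothesis guarantees only that $S_i^{\dag}$ is bounded, so this additional assumption is needed for Lemma \ref{Lemma 2.2} to apply.
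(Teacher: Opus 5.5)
Your proposal is correct and follows the paper's proof step for step. You use the triangle inequality, apply (\ref{(13)}) to $S_1$ and (\ref{(14)}) to $S_2$, bound both $\lambda$'s by their maximum, apply the scalar Cauchy--Schwarz inequality, and interchange $S_1,S_2$ to get (\ref{(16)}). Your explicit check that $\|T^{*_A}y\|_A^2=\langle TT^{*_A}y,y\rangle_A$ and your note that $S_iS_i^{\dag}$ and $S_i^{\dag}S_i$ must admit $A$-adjoints are useful clarifications of points the paper uses only tacitly.
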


\begin{proof}
Let $z,y\in{\mathcal{H}}.$
 From the inequality (\ref{(13)}), we have 
 \begin{equation}\label{(17)}
\begin{aligned}&|\langle S_1z,y \rangle_{A}|\le\lambda(\theta_1)\|S_1z\|_{A}\hspace{0.05cm}\|(S_1S_1^{\dag})^{*_A}y \|_{A} \\&\hspace{1.5cm}  \le\lambda(\theta_1)\sqrt{\langle S_1^{*_A}S_1z,z \rangle_{A}\hspace{0.07cm}\langle S_1S_1^{\dag}(S_1S_1^{\dag})^{*_A}y,y\rangle_{A}}
\end{aligned}\tag{17}\end{equation}
and from the inequality (\ref{(14)}), we get
\begin{equation}\label{(18)}
\begin{aligned}&|\langle S_2z,y \rangle_{A}|\le\lambda(\theta_2)\|S_2^{\dag}S_2z \|_{A}\hspace{0.05cm}\|S_2^{*_A}y\|_{A}\\&\hspace{1.5cm}\le\lambda(\theta_2)\sqrt{\langle (S_2^{\dag}S_2)^{*_A}S_2^{\dag}S_2z,z \rangle_{A}\hspace{0.07cm}\langle S_2S_2^{*_A}y,y \rangle_{A}}.
\end{aligned}\tag{18}\end{equation}
Now, we have 
\begin{equation*}\begin{aligned}&
 |\langle (S_1+S_2)z,y \rangle_{A}|\\&\le  |\langle S_1z,y \rangle_{A}|+ |\langle S_2z,y \rangle_{A}|\\&\mbox{\footnotesize$\le\lambda(\theta_1)\sqrt{\langle S_1^{*_A}S_1z,z \rangle_{A}\hspace{0.07cm}\langle S_1S_1^{\dag}(S_1S_1^{\dag})^{*_A}y,y\rangle_{A}}+\lambda(\theta_2)\sqrt{\langle (S_2^{\dag}S_2)^{*_A}S_2^{\dag}S_2z,z \rangle_{A}\hspace{0.07cm}\langle S_2S_2^{*_A}y,y \rangle_{A}}$}\\&\text{\hspace{0.1 cm}(by (\ref{(17)}) and (\ref{(18)}))}\\&\mbox{\footnotesize$\le\lambda(\theta)\Bigg(\sqrt{\langle S_1^{*_A}S_1 z,z \rangle_{A}\hspace{0.07 cm}\langle S_1 S_1^{\dag}(S_1 S_1^{\dag})^{*_A}y,y\rangle_{A}}+\sqrt{\langle (S_2^{\dag}S_2)^{*_A}S_2^{\dag}S_2 z,z \rangle_{A}\hspace{0.07 cm}\langle S_2 S_2^{*_A}y,y \rangle_{A}}\Bigg)$},
 \\&\text{\hspace{0.1 cm} where $\lambda(\theta)=\max\big\{\lambda(\theta_1),\lambda(\theta_2)\big\}$}.\end{aligned}\end{equation*}
 By the Cauchy-Schwarz inequality, we have
\begin{equation*}\begin{aligned} &\mbox{\footnotesize$\le\lambda(\theta) \sqrt{\Big(\langle S_1^{*_A}S_1 z,z \rangle_{A}+\langle (S_2^{\dag}S_2)^{*_A}S_2^{\dag}S_2 z,z \rangle_{A}\Big)}\sqrt{\Big(\langle S_1 S_1^{\dag}(S_1 S_1^{\dag})^{*_A}y,y\rangle_{A}+\langle S_2 S_2^{*_A} y,y \rangle_{A}\Big)}$}\\&\mbox{\footnotesize$=\lambda(\theta)\sqrt{\Big\langle\Big(S_1^{*_A}S_1+(S_2^{\dag}S_2)^{*_A}S_2^{\dag}S_2\Big)z,z \Big\rangle_{A}\hspace{0.05 cm}\Big\langle\Big( S_1 S_1^{\dag}(S_1 S_1^{\dag})^{*_A}+S_2 S_2^{*_A}\Big)y,y \Big\rangle_{A}}$}.\end{aligned}\end{equation*}
Interchanging $S_1$ and $S_2$, we infer that
\begin{equation*}\begin{aligned}
&\mbox{\footnotesize$ |\langle (S_1+S_2)z,y \rangle_{A}|\le \lambda(\phi) \sqrt{\Big\langle\Big(S_2^{*_A}S_2+(S_1^{\dag}S_1)^{*_A}S_1^{\dag}S_1\Big)z,z \Big\rangle_{A}\hspace{0.05 cm}\Big\langle\Big( S_2 S_2^{\dag}(S_2 S_2^{\dag})^{*_A}+S_1 S_1^{*_A}\Big)y,y \Big\rangle}_{A}.$}
\end{aligned}\end{equation*}
This completes the proof.
\end{proof}

From Theorem \ref{Theorem 2.3}, we directly derive the following corollaries which are considered as extended version and improved form of these inequalities ((\ref{(5)})- (\ref{(8)})).

\begin{corollary}\label{Corollary 2.4}
Let $S_1,S_2\in {\mathbb{B}_{A}(\mathcal{H})}$ with closed ranges, and let $z,y \in {\mathcal{H}}$. Then

\begin{equation} \label{(19)}
 \begin{aligned}&\| S_1+S_2 \|_{A}\le \lambda(\theta)\sqrt{\|S_1^{*_A}S_1+(S_2^{\dag}S_2)^{*_A}S_2^{\dag}S_2\|_{A}\hspace{0.09 cm}\| S_1 S_1^{\dag}(S_1 S_1^{\dag})^{*_A}+S_2 S_2^{*_A}\|_{A}}
\end{aligned} \tag{19}  
\end{equation}
and

\begin{equation}\label{(20)}
\begin{aligned}
& \|S_1+S_2\|_{A}\le \lambda(\phi) \sqrt{\|S_2^{*_A}S_2+(S_1^{\dag}S_1)^{*_A}S_1^{\dag}S_1\|_{A}\hspace{0.09 cm}\| S_2 S_2^{\dag}(S_2 S_2^{\dag})^{*_A}+S_1 S_1^{*_A}\|_{A},}\end{aligned}
\tag{20}\end{equation}where

\mbox{\footnotesize$\text{ $\lambda(\theta)=\max\big\{\lambda(\theta_1),\lambda(\theta_2)\big\},$ $\theta_1=\angle_{\ll A^{\frac{1}{2}}S_1 z,\hspace{0.07 cm}A^{\frac{1}{2}}(S_1 S_1^{\dag})^{*_A}y  \gg},$  $\theta_2=\angle_{\ll A^{\frac{1}{2}}S_2^{\dag}S_2z,\hspace{0.07 cm} A^{\frac{1}{2}}S_2^{*_A}y \gg}$},$}

and

\mbox{\footnotesize$\text{ $\lambda(\phi)=\max\big\{\lambda(\theta_3),\lambda(\theta_4)\big\},$ $\theta_3=\angle_{\ll A^{\frac{1}{2}}S_2 z,\hspace{0.07 cm} A^{\frac{1}{2}}(S_2 S_2^{\dag})^{*_A}y  \gg},$ $\theta_4=\angle_{\ll A^{\frac{1}{2}}S_1^{\dag}S_1z,\hspace{0.07 cm}A^{\frac{1}{2}}S_1^{*_A}y\gg}$ .}$} 
\end{corollary}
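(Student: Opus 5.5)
The plan is to take the supremum in inequality (\ref{(15)}) of Theorem \ref{Theorem 2.3} over $A$-unit vectors $z,y$. For $S\in\mathbb{B}_{A}(\mathcal{H})$ we use the representation
\[
\|S\|_{A}=\sup\{|\langle Sz,y\rangle_{A}|:\|z\|_{A}=\|y\|_{A}=1\}
\]
recalled in the introduction. Applied to $S_1+S_2\in\mathbb{B}_{A}(\mathcal{H})$, this means it suffices to bound $|\langle (S_1+S_2)z,y\rangle_{A}|$ for every pair with $\|z\|_{A}=\|y\|_{A}=1$. The angle factor $\lambda(\theta)$ is understood as the quantity attached to the pair $(z,y)$; equivalently, one reads (\ref{(19)}) with $\lambda(\theta)$ evaluated along a maximizing sequence, or bounded by its supremum, which is at most $1$.

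The key step is to identify the two operators under the square root in (\ref{(15)}) as $A$-positive. Let
\[
P=S_1^{*_A}S_1+(S_2^{\dag}S_2)^{*_A}S_2^{\dag}S_2,\qquad Q=S_1S_1^{\dag}(S_1S_1^{\dag})^{*_A}+S_2S_2^{*_A}.
\]
Each summand has the form $T^{*_A}T$ or $TT^{*_A}$ with $T\in\mathbb{B}_{A}(\mathcal{H})$. Here $S_2^{\dag}S_2$ and $S_1S_1^{\dag}$ are bounded because the ranges are closed, and I will assume, as the paper implicitly does in Lemma \ref{Lemma 2.2}, that they admit $A$-adjoints. Such summands are $A$-selfadjoint and $A$-positive, so $P$ and $Q$ are $A$-positive.

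For an $A$-positive operator the paper notes $\|P\|_{A}=\sup\{\langle Pz,z\rangle_{A}:\|z\|_{A}=1\}$. Hence $\langle Pz,z\rangle_{A}\le\|P\|_{A}$ and $\langle Qy,y\rangle_{A}\le\|Q\|_{A}$ for $A$-unit $z,y$. Substituting these into (\ref{(15)}) gives
\[
|\langle (S_1+S_2)z,y\rangle_{A}|\le\lambda(\theta)\sqrt{\|P\|_{A}\,\|Q\|_{A}},
\]
and taking the supremum yields (\ref{(19)}). The same argument applied to (\ref{(16)}), with the roles of $S_1$ and $S_2$ interchanged, gives (\ref{(20)}).

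The main obstacle is interpretive rather than computational: $\lambda(\theta)$ depends on $z$ and $y$, so it cannot be pulled outside the supremum literally. I would handle this by choosing sequences $z_n,y_n$ with $|\langle (S_1+S_2)z_n,y_n\rangle_{A}|\to\|S_1+S_2\|_{A}$ and stating the bound with the corresponding angles. Alternatively, one can replace $\lambda(\theta)$ by its supremum over unit pairs, using $\lambda\le 1$ from (\ref{(11)}). The latter recovers the $A$-analogues of (\ref{(5)}) and (\ref{(6)}) as a consequence.
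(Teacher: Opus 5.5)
Your argument follows the paper's route: the paper obtains (19)--(20) ``directly'' from (15)--(16) by bounding the two $A$-positive quadratic forms by their $A$-norms and taking the supremum over $A$-unit $z,y$, under the same tacit assumption that $S_iS_i^{\dag}$ and $S_i^{\dag}S_i$ admit $A$-adjoints. Your caveat about $\lambda(\theta)$ is well founded and is a point the paper glosses over: read pointwise in $(z,y)$ the corollary is false (take $A=I$, $S_1=S_2=I$ on $\mathbb{C}^2$ and $z\perp y$, so $\lambda(\theta)=\tfrac12$ and the right-hand side is $1<2=\|S_1+S_2\|$), so your maximizing-sequence or $\lambda\le 1$ reading is the right way to make the statement precise.
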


\begin{corollary}\label{Corollary 2.5}
Let $S_1,S_2\in {\mathbb{B}_{A}(\mathcal{H})}$ with closed ranges, and let $z \in {\mathcal{H}}$. Then

\begin{equation} \label{(21)}
 \begin{aligned}&\omega_{A}(S_1+S_2)\le \frac{\lambda(\theta)}{2}\Big\|S_1^{*_A}S_1+(S_2^{\dag}S_2)^{*_A}S_2^{\dag}S_2+S_1 S_1^{\dag}(S_1 S_1^{\dag})^{*_A}+S_2 S_2^{*_A}\Big\|_{A}\end{aligned} \tag{21}  
\end{equation}
and
\begin{equation}\label{(22)}
\begin{aligned}
& \omega_{A}(S_1+S_2)\le \frac{\lambda(\phi)}{2} \Big\|S_2^{*_A}S_2+(S_1^{\dag}S_1)^{*_A}S_1^{\dag}S_1+S_2 S_2^{\dag}(S_2 S_2^{\dag})^{*_A}+S_1 S_1^{*_A}\Big\|_{A},\end{aligned}
\tag{22}\end{equation} where

\mbox{\footnotesize$\text{ $\lambda(\theta)=\max\big\{\lambda(\theta_1),\lambda(\theta_2)\big\},$ $\theta_1=\angle_{\ll A^{\frac{1}{2}}S_1 z,\hspace{0.07 cm}A^{\frac{1}{2}}(S_1 S_1^{\dag})^{*_A}z  \gg},$ $\theta_2=\angle_{\ll A^{\frac{1}{2}}S_2^{\dag}S_2z,\hspace{0.07 cm} A^{\frac{1}{2}}S_2^{*_A}z \gg}$}$}

and

\mbox{\footnotesize$\text{ $\lambda(\phi)=\max\big\{\lambda(\theta_3),\lambda(\theta_4)\big\},$ $\theta_3=\angle_{\ll A^{\frac{1}{2}}S_2 z,\hspace{0.07 cm} A^{\frac{1}{2}}(S_2 S_2^{\dag})^{*_A}z  \gg},$ $\theta_4=\angle_{\ll A^{\frac{1}{2}}S_1^{\dag}S_1z,\hspace{0.07 cm}A^{\frac{1}{2}}S_1^{*_A}z\gg}$ }$}.

\end{corollary}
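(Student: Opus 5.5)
We obtain Corollary \ref{Corollary 2.5} by specialising Theorem \ref{Theorem 2.3} to the diagonal case $y=z$. Then we replace the geometric mean by an arithmetic mean, and finally take the supremum over $A$-unit vectors.

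\textbf{Step 1 (apply the theorem with $y=z$).} Fix $z\in\mathcal{H}$ with $\|z\|_{A}=1$ and put $y=z$ in (\ref{(15)}). This gives
\begin{equation*}
|\langle (S_1+S_2)z,z\rangle_{A}|\le \lambda(\theta)\sqrt{\langle P z,z\rangle_{A}\,\langle Q z,z\rangle_{A}},
\end{equation*}
where $P=S_1^{*_A}S_1+(S_2^{\dag}S_2)^{*_A}S_2^{\dag}S_2$ and $Q=S_1S_1^{\dag}(S_1S_1^{\dag})^{*_A}+S_2S_2^{*_A}$. The angles $\theta_1,\theta_2$ are then exactly those in the statement, with $y$ replaced by $z$.

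Each summand of $P$ and $Q$ has the form $T^{*_A}T$ or $TT^{*_A}$ with $T\in\mathbb{B}_{A}(\mathcal{H})$. Hence $P$ and $Q$ are $A$-positive, and both inner products are nonnegative reals.

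\textbf{Step 2 (AM--GM).} By the arithmetic--geometric mean inequality,
\begin{equation*}
\sqrt{\langle Pz,z\rangle_{A}\langle Qz,z\rangle_{A}}\le \tfrac12\langle (P+Q)z,z\rangle_{A}.
\end{equation*}
Since $P+Q$ is $A$-positive, the remark in the introduction gives $\langle (P+Q)z,z\rangle_{A}\le \|P+Q\|_{A}$ for $\|z\|_{A}=1$. Combining,
\begin{equation*}
|\langle (S_1+S_2)z,z\rangle_{A}|\le \tfrac{\lambda(\theta)}{2}\|P+Q\|_{A}.
\end{equation*}

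\textbf{Step 3 (supremum over $z$).} Take the supremum over all $z$ with $\|z\|_{A}=1$ to obtain (\ref{(21)}). Applying the same argument to (\ref{(16)}) yields (\ref{(22)}).

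\textbf{Main obstacle: the meaning of $\lambda(\theta)$.} The angles, and hence $\lambda(\theta)$, depend on $z$, so the supremum cannot be taken literally with $z$ fixed. I would handle this as in Corollary \ref{Corollary 2.4}: read the bound pointwise, or interpret $\lambda(\theta)$ as the supremum of $\max\{\lambda(\theta_1),\lambda(\theta_2)\}$ over $A$-unit $z$. Since $\lambda\le 1$ and $\|P+Q\|_{A}$ does not depend on $z$, this supremum simply passes through the inequality.

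A minor point is the degenerate case where $A^{\frac12}S_1z$ or $A^{\frac12}(S_1S_1^{\dag})^{*_A}z$ vanishes, so that the angle is undefined. There both sides of (\ref{(13)}) are zero, and the inequality holds with any value of $\lambda$.
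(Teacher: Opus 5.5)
Your proof is the paper's proof: you set $y=z$ in (15), apply AM--GM to reach $\frac{\lambda(\theta)}{2}\langle (P+Q)z,z\rangle_{A}\le\frac{\lambda(\theta)}{2}\|P+Q\|_{A}$, take the supremum over $A$-unit $z$, and interchange $S_1$ and $S_2$ to get (22). Your caveat about the $z$-dependence of $\lambda(\theta)$ is justified, since the paper's proof simply passes over it, and with a fixed $z$ the bound is actually false: for $A=I$, $S_1=S_2=\mathrm{diag}(1,-1)$ and $z=\tfrac{1}{\sqrt{2}}(1,1)^{t}$ one gets $\theta_1=\theta_2=\tfrac{\pi}{2}$, so the right side of (21) equals $\tfrac{1/2}{2}\cdot 4=1$, whereas $\omega(S_1+S_2)=2$; your reading of $\lambda(\theta)$ as a supremum over $A$-unit $z$ is therefore the correct way to make the statement true.
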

\begin{proof}
Putting $y=z$ where $\|z\|_{A}=1$ and applying AM-GM inequality in Theorem \ref{Theorem 2.3}, we reach
\begin{equation*}\begin{aligned}&
|\langle (S_1+S_2)z,z \rangle_{A}|\le \frac{\lambda(\theta)}{2}\Big\langle\Big(S_1^{*_A}S_1+(S_2^{\dag}S_2)^{*_A}S_2^{\dag}S_2+S_1 S_1^{\dag}(S_1 S_1^{\dag})^{*_A}+S_2 S_2^{*_A}\Big) z,z \Big\rangle_{A}\\&\hspace{2.5cm}\le\frac{\lambda(\theta)}{2}\Big\|S_1^{*_A}S_1+(S_2^{\dag}S_2)^{*_A}S_2^{\dag}S_2+S_1 S_1^{\dag}(S_1 S_1^{\dag})^{*_A}+S_2 S_2^{*_A}\Big\|_{A}.\end{aligned}
\end{equation*}
By considering the supremum over $z\in{\mathcal{H}}$ where $\|z\|_{A}=1,$ we get the desired inequality (\ref{(21)}).

Interchanging $S_1$ and $S_2$, we obtain the inequality (\ref{(22)}).

\end{proof}

Putting $S_1=0$ and $S_2=0$ in Corollary \ref{Corollary 2.5}, respectively we immediately obtain the next corollary.

\begin{corollary}\label{Corollary 2.6} Let $S_1\in {\mathbb{B}_{A}(\mathcal{H})}$ with closed ranges, and let $z \in {\mathcal{H}}$. Then

\begin{equation}\label{(23)}
\begin{aligned}
&\omega_{A}(S_1)\le \frac{\lambda(\theta)}{2}\min\Big\{\Big\|S_1^{*_A}S_1++S_1 S_1^{\dag}(S_1 S_1^{\dag})^{*_A}\Big\|_{A},\Big\|(S_1^{\dag}S_1)^{*_A}S_1^{\dag}S_1+S_1 S_1^{*_A}\Big\|_{A}\Big\}\\&\text{where}\\&\mbox{\footnotesize$\text{ $\lambda(\theta)=\max\big\{\lambda(\theta_1),\lambda(\theta_2)\big\},$ $\theta_1=\angle_{\ll A^{\frac{1}{2}}S_1 z,\hspace{0.07 cm}A^{\frac{1}{2}}(S_1 S_1^{\dag})^{*_A}z  \gg}$ and $\theta_2=\angle_{\ll A^{\frac{1}{2}}S_1^{\dag}S_1z,\hspace{0.07 cm} A^{\frac{1}{2}}S_1^{*_A}z \gg}$}$}.
\end{aligned}\tag{23}  
\end{equation}
\end{corollary}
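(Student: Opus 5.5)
The plan is to specialize Corollary \ref{Corollary 2.5} twice, once with the second operator equal to zero and once with the first, and then take the minimum. To keep notation clear, write $S$ for the given operator $S_1$; the corollary will be applied to a pair $(T_1,T_2)$.

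\textbf{First case.} Take $T_1=S$ and $T_2=0$ in (\ref{(21)}). The zero operator has closed range and $0^{\dag}=0$, so every term involving $T_2$ vanishes: $(T_2^{\dag}T_2)^{*_A}T_2^{\dag}T_2=0$ and $T_2T_2^{*_A}=0$. Inequality (\ref{(21)}) then reads
\[
\omega_A(S)\le \frac{\lambda(\theta)}{2}\Big\|S^{*_A}S+SS^{\dag}(SS^{\dag})^{*_A}\Big\|_A .
\]

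\textbf{Second case.} Take $T_1=0$ and $T_2=S$ in (\ref{(21)}). Now the terms $T_1^{*_A}T_1$ and $T_1T_1^{\dag}(T_1T_1^{\dag})^{*_A}$ vanish, and we obtain
\[
\omega_A(S)\le \frac{\lambda(\theta)}{2}\Big\|(S^{\dag}S)^{*_A}S^{\dag}S+SS^{*_A}\Big\|_A .
\]
The same estimate also follows from (\ref{(22)}) with $T_1=S$ and $T_2=0$, since the interchange of roles produces exactly this operator.

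\textbf{The angle factor, which is the only delicate step.} The statement uses a single $\lambda(\theta)=\max\{\lambda(\theta_1),\lambda(\theta_2)\}$, where $\theta_1$ comes from the pair $S z$, $(SS^{\dag})^{*_A}z$ and $\theta_2$ from the pair $S^{\dag}Sz$, $S^{*_A}z$. I have to check which angles actually occur in each case.

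\emph{In the first case,} the factor from Theorem \ref{Theorem 2.3} is the maximum of $\lambda(\theta_1)$ and the angle factor attached to the zero operator.

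\emph{In the second case,} it is the maximum of the zero operator's factor and $\lambda(\theta_2)$.

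\emph{The zero-operator factor.} Because the corresponding summand in the proof of Theorem \ref{Theorem 2.3} is $0$, its angle factor is multiplied by zero and can be replaced by any value; in particular it can be taken to be $\lambda(\theta_1)$ or $\lambda(\theta_2)$ (with the convention $\lambda=1$ for degenerate vectors). A cleaner route is to go back to the chain of inequalities in that proof: for the zero summand, the estimate $|\langle 0z,z\rangle_A|\le \lambda(\cdot)\cdot 0$ holds trivially for any choice of factor.

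\emph{Conclusion for the angle.} Each case is therefore bounded with factor $\lambda(\theta_1)\le\lambda(\theta)$, respectively $\lambda(\theta_2)\le\lambda(\theta)$. So both bounds hold with the common factor $\lambda(\theta)$.

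\textbf{Conclusion.} Taking the minimum of the two bounds gives (\ref{(23)}).

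As in Corollary \ref{Corollary 2.5}, the $\lambda$-factor depends on $z$. The inequality should be read pointwise, $|\langle Sz,z\rangle_A|\le\frac{\lambda(\theta)}{2}\min\{\cdots\}$, before taking the supremum over $\|z\|_A=1$, with the same interpretation the paper adopts in its earlier corollaries.
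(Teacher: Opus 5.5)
Your proposal is the paper's own argument: (\ref{(23)}) is obtained by putting $S_2=0$ and then $S_1=0$ in Corollary~\ref{Corollary 2.5} and taking the minimum, and your handling of the undefined angle attached to the zero operator fills in a detail the paper skips. One correction to your closing remark: the pointwise reading $|\langle S_1z,z\rangle_A|\le\frac{\lambda(\theta)}{2}\min\{\cdots\}$ is not what the paper does, since its proof of Corollary~\ref{Corollary 2.5} takes the supremum over $z$ on the left only while $\lambda(\theta)$ stays evaluated at one fixed $z$; your pointwise reading is nonetheless the only valid one, because for $A=I$, $S_1=\mathrm{diag}(1,-1)$ and $z=(1,1)^t/\sqrt{2}$ one gets $\theta_1=\theta_2=\pi/2$ and $\lambda(\theta)=\tfrac12$, so the right side of (\ref{(23)}) equals $\tfrac12<1=\omega(S_1)$.
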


\begin{remark}
It is known that $S^{\dag}S,$  $SS^{\dag}$  are self-adjoint and idempotent.
If we take $A=I$ in Corollary \ref{Corollary 2.4} and  Corollary \ref{Corollary 2.5}, respectively we notice that
\begin{equation*} 
 \begin{aligned}&\| S_1+S_2 \|\le \lambda(\theta)\sqrt{\|S_1^{*}S_1+S_2^{\dag}S_2\|\hspace{0.09 cm}\| S_1 S_1^{\dag}+S_2 S_2^{*}\|},\\&\mbox{\footnotesize$\text{ where $\lambda(\theta)=\max\big\{\lambda(\theta_1),\lambda(\theta_2)\big\},$ $\theta_1=\angle_{\ll S_1 z,\hspace{0.07 cm}S_1 S_1^{\dag}y  \gg}$ and $\theta_2=\angle_{\ll S_2^{\dag}S_2z,\hspace{0.07 cm} S_2^{*}y \gg}$}$}\\&\hspace{1.5 cm}\le \sqrt{\|S_1^{*}S_1+S_2^{\dag}S_2\|\hspace{0.09 cm}\| S_1 S_1^{\dag}+S_2 S_2^{*}\|}\text{\hspace{0.3 cm} (since $\lambda(\theta)\le1$)},
\end{aligned}  \end{equation*}

\begin{equation*}
\begin{aligned}
& \|S_1+S_2\|\le \lambda(\phi) \sqrt{\|S_2^{*}S_2+S_1^{\dag}S_1\|\hspace{0.09 cm}\| S_2 S_2^{\dag}+S_1 S_1^{*}\|,}\\&\mbox{\footnotesize$\text{ where $\lambda(\phi)=\max\big\{\lambda(\theta_3),\lambda(\theta_4)\big\},$ $\theta_3=\angle_{\ll S_2 z,\hspace{0.07 cm} S_2 S_2^{\dag}y  \gg}$ and $\theta_4=\angle_{\ll S_1^{\dag}S_1z,\hspace{0.07 cm}S_1^{*}y\gg}$ }$}\\&\hspace{1.5 cm}\le\sqrt{\|S_2^{*}S_2+S_1^{\dag}S_1\|\hspace{0.09 cm}\| S_2 S_2^{\dag}+S_1 S_1^{*}\|}\text{\hspace{0.3 cm} (since $\lambda(\phi)\le1$)}
\end{aligned}
\end{equation*}
and

\begin{equation*}
 \begin{aligned}&\omega(S_1+S_2)\le \frac{\lambda(\theta)}{2}\Big\|S_1^{*}S_1+S_2^{\dag}S_2+S_1 S_1^{\dag}+S_2 S_2^{*}\Big\|,\\&\mbox{ \footnotesize$\text{ where $\lambda(\theta)=\max\big\{\lambda(\theta_1),\lambda(\theta_2)\big\},$ $\theta_1=\angle_{\ll S_1 z,\hspace{0.07 cm} S_1 S_1^{\dag}z  \gg}$ and $\theta_2=\angle_{\ll S_2^{\dag}S_2z,\hspace{0.07 cm} S_2^{*}z \gg}$}$}\\&\hspace{1.5cm}\le\frac{1}{2}\hspace{0.05cm}\Big\|S_1^{*}S_1+S_2^{\dag}S_2+S_1 S_1^{\dag}+S_2 S_2^{*}\Big\|,
\end{aligned} \end{equation*}  
\begin{equation*}
\begin{aligned}
& \omega(S_1+S_2)\le \frac{\lambda(\phi)}{2} \Big\|S_2^{*}S_2+S_1^{\dag}S_1+S_2 S_2^{\dag}+S_1 S_1^{*}\Big\|,\\&\mbox{\footnotesize$\text{ where $\lambda(\phi)=\max\big\{\lambda(\theta_3),\lambda(\theta_4)\big\},$ $\theta_3=\angle_{\ll S_2 z,\hspace{0.07 cm} S_2 S_2^{\dag}z  \gg}$ and $\theta_4=\angle_{\ll S_1^{\dag}S_1z,\hspace{0.07 cm}S_1^{*}z\gg}$ }$}\\&\hspace{1.5cm}\le\frac{1}{2}\hspace{0.05cm}\Big\|S_2^{*}S_2+S_1^{\dag}S_1+S_2 S_2^{\dag}+S_1 S_1^{*}\Big\|.
\end{aligned}
\end{equation*}
So, we can conclude that Corollary \ref{Corollary 2.4} and  Corollary \ref{Corollary 2.5} extend and improve the inequalities ((\ref{(5)})-(\ref{(8)})).
\end{remark}

\begin{lemma}\label{Lemma 2.8}

[\citenum{conde2024some}, Lemma 3.3]
Let $T_1\in{\mathbb{B}(\mathcal{H})}$ be an $A-$positive operator and $y\in{\mathcal{H}}$ with $\|y\|_{A}=1.$ Then \begin{equation*}\langle T_1 y,y \rangle _{A}^n\le \langle T_1^n y,y \rangle _{A} \hspace{0.4cm}\forall n \in {\mathbb{N}}.\end{equation*}
\end{lemma}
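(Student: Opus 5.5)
The plan is to reduce the statement to the classical Hölder–McCarthy inequality in the Hilbert space $\mathcal{H}$. Since $T_1$ is $A$-positive, $AT_1\in\mathbb{B}(\mathcal{H})^+$, so $AT_1=T_1^*A$ and $T_1$ is $A$-selfadjoint. First I will check that the powers $T_1^n$ are also $A$-positive. By induction, $AT_1^n=(T_1^*)^nA$. For even $n=2k$,
\begin{equation*}
\langle AT_1^{2k}y,y\rangle=\langle AT_1^ky,T_1^ky\rangle\ge0 .
\end{equation*}
For odd $n=2k+1$,
\begin{equation*}
\langle AT_1^{2k+1}y,y\rangle=\langle AT_1(T_1^ky),T_1^ky\rangle\ge0 .
\end{equation*}
Hence the right-hand side $\langle T_1^ny,y\rangle_A$ is a nonnegative real number, and so is the left-hand side.

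The cleanest route avoids any square root of $T_1$. I will argue by a spectral-measure / Jensen argument on a genuinely selfadjoint operator. If $A$ has closed range, one may take $\tilde T=A^{1/2}T_1A^{\dag 1/2}$ restricted to $\overline{\mathcal{R}(A)}$, which is selfadjoint and positive, but closed range is not assumed. So I prefer a direct induction-free argument based on the polynomial spectral measure. For fixed $y$ with $\|y\|_A=1$, the sequence $m_j=\langle T_1^jy,y\rangle_A$ for $j\ge0$ (with $m_0=1$) has a positive semidefinite Hankel structure:
\begin{equation*}
\sum_{i,j}\overline{c_i}c_j\,m_{i+j}=\Big\langle A\,p(T_1)y,\,p(T_1)y\Big\rangle\ge0,\qquad p(t)=\sum_j c_jt^j ,
\end{equation*}
using $AT_1^{i+j}=(T_1^*)^iAT_1^j$. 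The same holds with the shifted sequence $m_{j+1}$, by positivity of $AT_1$. By the Stieltjes moment theorem, together with boundedness $|m_j|\le \|T_1\|_A^j$, there is a probability measure $\mu$ on $[0,\|T_1\|_A]$ with $m_j=\int t^j\,d\mu$. Jensen's inequality for the convex function $t\mapsto t^n$ on $[0,\infty)$ then gives $m_1^n\le m_n$, which is exactly the claim.

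A more elementary alternative, which I would try first to avoid moment theory, uses Cauchy–Schwarz for the semi-inner product $\langle A\cdot,\cdot\rangle$ repeatedly, via log-convexity of $m_j$. One has
\begin{equation*}
m_{j}^2=\langle AT_1^{j-1}y,\,T_1\,T_1^{j-1}y\rangle\cdots
\end{equation*}
More precisely, $m_{a+b}^2=|\langle AT_1^ay,T_1^by\rangle|^2\le m_{2a}m_{2b}$, and with the positive form $\langle AT_1\cdot,\cdot\rangle$ one also gets $m_{a+b+1}^2\le m_{2a+1}m_{2b+1}$. Together these give $m_j^2\le m_{j-1}m_{j+1}$ for all $j\ge1$. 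Log-convexity with $m_0=1$ then yields $m_1^n\le m_n$ by the standard telescoping: $m_{j+1}/m_j$ is nondecreasing, so $m_n=\prod_{j<n} m_{j+1}/m_j\ge (m_1/m_0)^n$.

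The main obstacle is handling zeros in this ratio argument. If some $m_j=0$ with $j\ge1$, then log-convexity forces $m_1=0$ (via $m_1^{2^k}\le m_{2^k}\cdots$), or more simply the claimed inequality is trivial when $m_1=0$. So I will treat $m_1=0$ separately. When $m_1>0$, log-convexity together with $m_0=1$ propagates positivity to all $m_j$, since $m_{j+1}\ge m_j^2/m_{j-1}>0$, and then the ratio argument applies cleanly.
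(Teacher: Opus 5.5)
Your proof is correct, but it follows a different route from the one behind the lemma. The paper gives no argument of its own and defers to Conde et al. The standard proof in that literature transfers the problem to the Hilbert space $\mathbf{R}(A^{1/2})$. There an $A$-positive $T_1$ induces a genuinely positive operator $\widetilde{T_1}$ with $\langle T_1^n y,y\rangle_A=\langle \widetilde{T_1}^{\,n}Ay,Ay\rangle_{\mathbf{R}(A^{1/2})}$ and $\|Ay\|_{\mathbf{R}(A^{1/2})}=\|y\|_A$, and one then quotes the classical H\"older--McCarthy inequality (spectral theorem plus Jensen).

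Your Stieltjes-moment route is essentially that spectral argument in disguise: since your measure has compact support, the moment problem is determinate, so $\mu$ is the spectral measure of $\widetilde{T_1}$ at $Ay$. You have simply replaced the lifting construction with a heavier black box.

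Your log-convexity route is the genuinely different and more elementary one. It uses only $AT_1^k=(T_1^*)^kA$ and Cauchy--Schwarz for the two positive semidefinite forms $\langle A\cdot,\cdot\rangle$ and $\langle AT_1\cdot,\cdot\rangle$ to obtain $m_j^2\le m_{j-1}m_{j+1}$. Your split into the cases $m_1=0$ (trivial, since $m_n\ge 0$) and $m_1>0$ (positivity propagates, so the ratio telescoping is legitimate) handles the only delicate point. This route needs no closed-range hypothesis and no auxiliary space. What it gives up is the functional calculus, so it is confined to integer exponents. The lifting approach, by contrast, gives $\langle T_1y,y\rangle_A^r\le\langle \widetilde{T_1}^{\,r}Ay,Ay\rangle_{\mathbf{R}(A^{1/2})}$ for all real $r\ge1$, and Jensen-type bounds for any convex function.

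In a write-up, drop the unfinished display for $m_j^2$ and the garbled parenthetical about $m_1^{2^k}$; your propagation argument supersedes both. The bound $m_j\le\|T_1\|_A^j$ is true ($A$-selfadjoint operators lie in $\mathbb{B}_A(\mathcal{H})$) but not needed, since Jensen only requires a representing probability measure on $[0,\infty)$.
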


\begin{theorem} \label{Theorem 2.9}
Let $S_1,S_2\in {\mathbb{B}_{A}(\mathcal{H})}$ with closed ranges, and let $z \in {\mathcal{H}}$. Then
\begin{equation}\label{(24)}
\begin{aligned}\omega_{A}^2(S_1+S_2)\le &\mbox{\footnotesize$2\lambda(\theta)
\min\Bigg\{\sqrt{\left\|\left(S_1^{*_A}S_1\right)^2+\left(S_2S_2^{\dag}(S_2S_2^{\dag})^{*_A}\right)^2\right\|_{A}\hspace{0.04cm}\left\|\left(S_2^{*_A}S_2\right)^2+\left(S_1S_1^{\dag}(S_1S_1^{\dag})^{*_A}\right)^2\right\|_{A}},$}\\&\mbox{\footnotesize$\sqrt{\left\|\left(S_1^{*_A}S_1\right)^2+\left(S_2^{*_A}S_2\right)^2\right\|_{A}\hspace{0.04cm}\left\|\left(S_1S_1^{\dag}(S_1S_1^{\dag})^{*_A}\right)^2+\left(S_2S_2^{\dag}(S_2S_2^{\dag})^{*_A}\right)^2\right\|_{A}}\Bigg\}$},
\end{aligned}\tag{24}\end{equation}
 
and

\begin{equation}\label{(25)}
\begin{aligned}\omega_{A}^2(S_1+S_2)\le &\mbox{\footnotesize$2\lambda(\phi)
\min\Bigg\{\sqrt{\left\|\left(S_1S_1^{*_A}\right)^2+\left((S_2^{\dag}S_2)^{*_A}S_2^{\dag}S_2\right)^2\right\|_{A}\left\|\left(S_2S_2^{*_A}\right)^2+\left((S_1^{\dag}S_1)^{*_A}S_1^{\dag}S_1\right)^2\right\|_{A}},$}\\&\mbox{\footnotesize$\sqrt{\left\|\left(S_1S_1^{*_A}\right)^2+\left(S_2S_2^{*_A}\right)^2\right\|_{A}\left\|\left((S_1^{\dag}S_1)^{*_A}S_1^{\dag}S_1\right)^2+\left((S_2^{\dag}S_2)^{*_A}S_2^{\dag}S_2\right)^2\right\|_{A}}\Bigg\}$},
\end{aligned}\tag{25}\end{equation}
where

$\lambda(\theta)=\max\{\lambda^2(\theta_1), \lambda^2(\theta_2)\},$ $\lambda(\phi)=\max\{\lambda^2(\theta_3), \lambda^2(\theta_4)\},$

$\theta_1=\angle_{\ll A^{\frac{1}{2}}S_1 z,\hspace{0.07cm}A^{\frac{1}{2}}(S_1 S_1^{\dag})^{*_A}z  \gg},$  $\theta_2=\angle_{\ll A^{\frac{1}{2}}S_2 z,\hspace{0.07cm}A^{\frac{1}{2}}(S_2 S_2^{\dag})^{*_A}z \gg}$ 

$\theta_3=\angle_{\ll A^{\frac{1}{2}}S_1^{*_A} z,\hspace{0.07cm}A^{\frac{1}{2}} S_1^{\dag}S_1z  \gg},$  $\theta_4=\angle_{\ll A^{\frac{1}{2}}S_2^{*_A} z,\hspace{0.07cm}A^{\frac{1}{2}}S_2^{\dag}S_2z \gg}$.
\end{theorem}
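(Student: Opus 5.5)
We fix $z\in\mathcal{H}$ with $\|z\|_{A}=1$ and first bound $|\langle (S_1+S_2)z,z\rangle_{A}|^2$; the final step is to take the supremum over such $z$. For (24) we apply inequality (13) of Lemma \ref{Lemma 2.2} to each summand separately with $y=z$:
\begin{equation*}
|\langle S_jz,z\rangle_{A}|\le \lambda(\theta_j)\,\|S_jz\|_{A}\,\|(S_jS_j^{\dag})^{*_A}z\|_{A},\qquad j=1,2 .
\end{equation*}
We then rewrite the seminorms as quadratic forms: $\|S_jz\|_{A}^2=\langle S_j^{*_A}S_jz,z\rangle_{A}$ and $\|(S_jS_j^{\dag})^{*_A}z\|_{A}^2=\langle S_jS_j^{\dag}(S_jS_j^{\dag})^{*_A}z,z\rangle_{A}$. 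To lighten notation, set $P_j=S_j^{*_A}S_j$ and $Q_j=S_jS_j^{\dag}(S_jS_j^{\dag})^{*_A}$. Both are $A$-positive.

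Next comes the triangle inequality, after which we pull out $\max\{\lambda(\theta_1),\lambda(\theta_2)\}$. Applying $(a+b)^2\le 2(a^2+b^2)$ then gives
\begin{equation*}
|\langle (S_1+S_2)z,z\rangle_{A}|^2\le 2\max\{\lambda^2(\theta_1),\lambda^2(\theta_2)\}\Big(\langle P_1z,z\rangle_{A}\langle Q_1z,z\rangle_{A}+\langle P_2z,z\rangle_{A}\langle Q_2z,z\rangle_{A}\Big).
\end{equation*}
The bracket can be estimated in two ways, and these produce the two entries of the minimum.

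\emph{First entry.} Apply the Cauchy--Schwarz inequality in $\mathbb{R}^2$, pairing $\langle P_1z,z\rangle_{A}$ with $\langle Q_2z,z\rangle_{A}$ and $\langle Q_1z,z\rangle_{A}$ with $\langle P_2z,z\rangle_{A}$. The bracket is at most $\sqrt{(\langle P_1z,z\rangle_{A}^2+\langle Q_2z,z\rangle_{A}^2)(\langle P_2z,z\rangle_{A}^2+\langle Q_1z,z\rangle_{A}^2)}$.

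\emph{Second entry.} Pair $P_1$ with $P_2$ and $Q_1$ with $Q_2$ instead.

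In each case, Lemma \ref{Lemma 2.8} with $n=2$ gives $\langle P_jz,z\rangle_{A}^2\le\langle P_j^2z,z\rangle_{A}$, and likewise for $Q_j$; this is valid because $\|z\|_{A}=1$. So $\langle P_1z,z\rangle_{A}^2+\langle Q_2z,z\rangle_{A}^2\le\langle (P_1^2+Q_2^2)z,z\rangle_{A}\le\|P_1^2+Q_2^2\|_{A}$. Here the last step uses that $P_1^2+Q_2^2$ is $A$-positive, so its $A$-seminorm is the supremum of its $A$-quadratic form. Both estimates hold simultaneously, so we may take their minimum, and the supremum over $z$ yields (24).

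For (25) we repeat the argument with inequality (14) of Lemma \ref{Lemma 2.2} in place of (13). This gives $|\langle S_jz,z\rangle_{A}|\le\lambda\,\|S_j^{\dag}S_jz\|_{A}\,\|S_j^{*_A}z\|_{A}$, with
\begin{equation*}
\|S_j^{*_A}z\|_{A}^2=\langle S_jS_j^{*_A}z,z\rangle_{A}, \qquad \|S_j^{\dag}S_jz\|_{A}^2=\langle (S_j^{\dag}S_j)^{*_A}S_j^{\dag}S_jz,z\rangle_{A}.
\end{equation*}
Here the relevant angle is between $A^{1/2}S_j^{\dag}S_jz$ and $A^{1/2}S_j^{*_A}z$. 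This matches $\theta_3,\theta_4$ as stated, since $\lambda$ is symmetric in its two vectors. The same two Cauchy--Schwarz pairings then give (25).

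The main obstacle is verifying that the operators to which Lemma \ref{Lemma 2.8} is applied are $A$-positive. For $S^{*_A}S$, $SS^{*_A}$ and $T^{*_A}T$ with $T=S^{\dag}S$ this is standard. For $Q_j=TT^{*_A}$ with $T=S_jS_j^{\dag}$, positivity holds provided $T\in\mathbb{B}_{A}(\mathcal{H})$, and I would assume this, as the paper implicitly does. I would also check that the squares and sums remain $A$-positive, which follows because $A$-selfadjoint $A$-positive operators are closed under these operations when they admit $A$-adjoints. Finally, if one of the vectors is zero the angle is undefined, but then the corresponding inequality is trivial.
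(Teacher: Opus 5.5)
Your chain of estimates is the paper's proof step for step. You apply (13) (for (25), (14)) with $y=z$, use the triangle inequality and $(a+b)^2\le 2(a^2+b^2)$, extract $\max\{\lambda^2(\theta_1),\lambda^2(\theta_2)\}$, use the two Cauchy--Schwarz pairings in $\mathbb{R}^2$ for the two entries of the minimum, and apply Lemma~\ref{Lemma 2.8} with $n=2$. Your side checks are correct: the tacit assumption $S_jS_j^{\dag}\in\mathbb{B}_{A}(\mathcal{H})$, the $A$-positivity of $P_j,Q_j$ and their squares, and the symmetry of the angle.

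\textbf{The gap.} It is your last sentence, ``the supremum over $z$ yields (24)'', and the paper's proof has exactly the same defect. The factor $\lambda(\theta)$ depends on $z$, but $\omega_A^2(S_1+S_2)$ does not. What your argument actually proves is the pointwise bound
\[
|\langle (S_1+S_2)z,z\rangle_A|^2\le 2\lambda(\theta_z)\min\{\cdots\}\qquad(\|z\|_A=1).
\]
Taking the supremum of this only gives $\omega_A^2(S_1+S_2)\le 2\bigl(\sup_{\|z\|_A=1}\lambda(\theta_z)\bigr)\min\{\cdots\}$. That is not (24) with $\lambda(\theta)$ evaluated at a given $z$.

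\textbf{Counterexample.} This is not a technicality: (24) and (25) are false for a fixed $z$. Take $A=I$, $\mathcal{H}=\mathbb{C}^2$ and $S_1=S_2=U=\begin{bmatrix}0&-1\\1&0\end{bmatrix}$. Then $U^{\dag}=U^*$, and $U^*U$, $UU^*$, $UU^{\dag}$, $U^{\dag}U$ all equal $I$. Hence every operator norm in (24) and (25) is $\|2I\|=2$, and both entries of each minimum equal $2$. For $z=e_1$ we have $Uz=e_2\perp z$ and $U^*z=-e_2\perp z$, so $\theta_1=\theta_2=\theta_3=\theta_4=\pi/2$. Then $\lambda(\pi/2)=\int_0^1|2s-1|\,ds=\tfrac12$, and the right-hand sides of (24) and (25) equal $2\cdot\tfrac14\cdot 2=1$. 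But $\omega^2(2U)=4$. The pointwise bound is of course fine here, since $\langle 2Ue_1,e_1\rangle=0$.

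\textbf{Repair.} You must change the statement in one of three ways. You can keep it pointwise in $z$. You can replace $\lambda(\theta)$ by $\sup_{\|z\|_A=1}\max\{\lambda^2(\theta_1),\lambda^2(\theta_2)\}$, but this can equal $1$: in the example it does, at the eigenvectors $(1,\mp i)/\sqrt2$ of $U$, where the angle is $0$. Then the refinement over the $\lambda$-free bound disappears. Or, when the $A$-numerical radius is attained at some $z_0$, you can state the bound existentially with $\lambda$ evaluated at that $z_0$.
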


\begin{proof}
Let $z\in{\mathcal{H}}$ where $\|z\|_{A}=1.$ Then we have
\begin{equation}\begin{aligned}
   &|\langle (S_1+S_2)z,z \rangle_{A}|^2\\&\le ( |\langle S_1z,z \rangle_{A}|+ |\langle S_2z,z\rangle_{A}|)^2\\&\le 2( |\langle S_1z,z \rangle_{A}|^2+ |\langle S_2z,z\rangle_{A}|^2)\\&\mbox{\footnotesize$\le2\left(\lambda^2(\theta_1)\langle S_1^{*_A}S_1z,z \rangle_{A}\hspace{0.07cm}\langle S_1S_1^{\dag}(S_1S_1^{\dag})^{*_A}z,z\rangle_{A}+\lambda^2(\theta_2)\langle S_2^{*_A}S_2z,z \rangle_{A}\hspace{0.07cm}\langle S_2S_2^{\dag}(S_2S_2^{\dag})^{*_A}z,z\rangle_{A}\right)$}\\&\mbox{\footnotesize$\ \le 2\lambda(\theta)\left(\langle S_1^{*_A}S_1z,z \rangle_{A}\hspace{0.07cm}\langle S_1S_1^{\dag}(S_1S_1^{\dag})^{*_A}z,z\rangle_{A}+\langle S_2^{*_A}S_2z,z \rangle_{A}\hspace{0.07cm}\langle S_2S_2^{\dag}(S_2S_2^{\dag})^{*_A}z,z\rangle_{A}\right)$}\end{aligned}\notag\end{equation}
 \begin{equation}\begin{aligned}&\mbox{\footnotesize$\le 2\lambda(\theta) \sqrt{\Big(\langle S_1^{*_A}S_1 z,z \rangle_{A}^2+\langle S_2S_2^{\dag}(S_2S_2^{\dag})^{*_A} z,z \rangle_{A}^2\Big)}\sqrt{\Big(\langle S_1 S_1^{\dag}(S_1 S_1^{\dag})^{*_A}z,z\rangle_{A}^2+\langle S_2^{*_A} S_2 z,z \rangle_{A}^2\Big)}$}\\&\mbox{\footnotesize$\le 2\lambda(\theta)\sqrt{\Big\langle\Big(\left(S_1^{*_A}S_1\right)^2+\left(S_2S_2^{\dag}(S_2S_2^{\dag})^{*_A}\right)^2\Big)z,z \Big\rangle_{A}\hspace{0.05 cm}\Big\langle\Big( \left(S_1 S_1^{\dag}(S_1 S_1^{\dag})^{*_A}\right)^2+\left( S_2^{*_A}S_2\right)^2\Big)z,z\Big\rangle_{A}}.$} \end{aligned}\notag\end{equation}
 So, \begin{equation}\begin{aligned}\label{(26)}
  \mbox{\footnotesize$|\langle (S_1+S_2)z,z \rangle_{A}|^2\le 2 \lambda(\theta)\left\|\left(S_1^{*_A}S_1\right)^2+\left(S_2S_2^{\dag}(S_2S_2^{\dag})^{*_A}\right)^2\right\|_{A}^{\frac{1}{2}}\hspace{0.04cm}\left\|\left(S_2^{*_A}S_2\right)^2+\left(S_1S_1^{\dag}(S_1S_1^{\dag})^{*_A}\right)^2\right\|_{A}^{\frac{1}{2}}$},
  \end{aligned}\tag{26}\end{equation}
where the first inequality follows from the triangle inequality, the second inequality follows from the convexity of the function $h(t)=t^2,$ the third inequality is obtained from the inequality (\ref{(13)}) with 
 $\theta_1=\angle_{\ll A^{\frac{1}{2}}S_1 z,\hspace{0.07cm}A^{\frac{1}{2}}(S_1 S_1^{\dag})^{*_A}z  \gg}$ and $\theta_2=\angle_{\ll A^{\frac{1}{2}}S_2 z,\hspace{0.07cm}A^{\frac{1}{2}}(S_2 S_2^{\dag})^{*_A}z   \gg},$ in the  fourth inequality we consider $\lambda(\theta)=\max\big\{\lambda^2(\theta_1),\lambda^2(\theta_2)\big\},$
we use the fact $d_1d_2+d_3d_4\le\sqrt{(d_1^2+d_3^2)(d_2^2+d_4^2)}$ for $d_1,d_2,d_3,d_4\in{\mathbb{R}}$ to obtain the fifth inequality and we apply Lemma \ref{Lemma 2.8} to obtain the sixth inequality.

 In the above proof, by taking different $d_1,d_2,d_3,d_4$ we reach at:
 \begin{equation}\label{(27)}
 \mbox{\footnotesize$|\langle (S_1+S_2)z,z \rangle_{A}|^2\le   2 \lambda(\theta)\left\|\left(S_1^{*_A}S_1\right)^2+\left(S_2^{*_A}S_2\right)^2\right\|_{A}^{\frac{1}{2}}\hspace{0.04cm}\left\|\left(S_1S_1^{\dag}(S_1S_1^{\dag})^{*_A}\right)^2+\left(S_2S_2^{\dag}(S_2S_2^{\dag})^{*_A}\right)^2\right\|_{A}^{\frac{1}{2}}$}  
\tag{27}\end{equation}

By considering the supremum over $z\in{\mathcal{H}}$ where $\|z\|_{A}=1$ in (\ref{(26)}) and (\ref{(27)}), we obtain the 
inequality (\ref{(24)}).

Again by applying the inequality (\ref{(14)}) and proceeding similarly as the above proof, we can get the desired inequality (\ref{(25)}).
\end{proof}

\begin{remark}
Letting $S_1=S_2$  in Theorem \ref{Theorem 2.9}, we establish that
\begin{equation*}
\begin{aligned}\omega^2_{A}(S_1)&\le \mbox{\footnotesize$\frac{\lambda(\theta)}{2}
\min\Bigg\{\left\|\left(S_1^{*_A}S_1\right)^2+\left(S_1S_1^{\dag}(S_1S_1^{\dag})^{*_A}\right)^2\right\|_{A},2\left\|S_1^{*_A}S_1\right\|_{A}\hspace{0.04cm}\left\|S_1S_1^{\dag}(S_1S_1^{\dag})^{*_A}\right\|_{A}\Bigg\}$}\\&\le \lambda(\theta)\|S_1\|_{A}^2, \text{which implies \hspace{0.1cm} $\omega_{A}(S_1)\le \|S_1\|_{A}$}
\end{aligned}\end{equation*}
 
and \begin{equation*}
\begin{aligned}\omega^2_{A}(S_1)&\le \mbox{\footnotesize$\frac{\lambda(\phi)}{2}\min\Bigg\{\left\|\left(S_1S_1^{*_A}\right)^2+\left((S_1^{\dag}S_1)^{*_A}S_1^{\dag}S_1\right)^2\right\|_{A},2\left\|S_1S_1^{*_A}\right\|_{A}\hspace{0.03cm}\left\|(S_1^{\dag}S_1)^{*_A}S_1^{\dag}S_1\right\|_{A}\Bigg\}$}\\&\le \lambda(\phi)\|S_1\|_{A}^2, \text{which implies \hspace{0.1cm} $\omega_{A}(S_1)\le \|S_1\|_{A}$.}
\end{aligned}\end{equation*}
\end{remark}

\begin{theorem}\label{Theorem 2.11}
 Let $S_1,S_2\in {\mathbb{B}_{A}(\mathcal{H})}$ with closed ranges, and let $z \in {\mathcal{H}}$. Then
\begin{equation}\label{(28)}
\begin{aligned}\omega_{A}^2(S_1+S_2)\le &\mbox{\footnotesize$\lambda(\theta)
\min\Bigg\{\left\|S_1^{*_A}S_1+(S_2^{\dag}S_2)^{*_A}S_2^{\dag}S_2\right\|_{A}\left\|S_2S_2^{*_A}+S_1S_1^{\dag}(S_1S_1^{\dag})^{*_A}\right\|_{A},$}\\&\mbox{\footnotesize$\left\|S_1^{*_A}S_1+S_2S_2^{*_A}\right\|_{A}\left\|S_1S_1^{\dag}(S_1S_1^{\dag})^{*_A}+(S_2^{\dag}S_2)^{*_A}S_2^{\dag}S_2\right\|_{A}$}\Bigg\},
\end{aligned}\tag{28}\end{equation}
where   $\lambda(\theta)=\max\{\lambda^2(\theta_1), \lambda^2(\theta_2)\},$

$\theta_1=\angle_{\ll A^{\frac{1}{2}}S_1 z,\hspace{0.07cm}A^{\frac{1}{2}}(S_1 S_1^{\dag})^{*_A}z  \gg}$ and $\theta_2=\angle_{\ll A^{\frac{1}{2}}S_2^{*_A} z,\hspace{0.07cm}A^{\frac{1}{2}}S_2^{\dag}S_2z \gg}$.
   \end{theorem}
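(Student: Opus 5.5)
Fix $z\in\mathcal{H}$ with $\|z\|_{A}=1$. The plan is to follow the proof of Theorem \ref{Theorem 2.9}, but to apply the two halves of Lemma \ref{Lemma 2.2} to different operators. For $S_1$ we use (\ref{(13)}), and for $S_2$ we use (\ref{(14)}) with the roles of the two vectors swapped.

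\textbf{Bounding each term.} For $S_1$, inequality (\ref{(13)}) with $y=z$ gives
$|\langle S_1z,z\rangle_{A}|\le\lambda(\theta_1)\|S_1z\|_{A}\|(S_1S_1^{\dag})^{*_A}z\|_{A}$, where $\theta_1$ is exactly as in the statement. Hence
\[
|\langle S_1z,z\rangle_{A}|^2\le\lambda^2(\theta_1)\langle S_1^{*_A}S_1z,z\rangle_{A}\langle S_1S_1^{\dag}(S_1S_1^{\dag})^{*_A}z,z\rangle_{A}.
\]
For $S_2$, the computation in (\ref{(14)}) gives $|\langle S_2z,z\rangle_{A}|=|\langle A^{\frac12}S_2^{\dag}S_2z,A^{\frac12}S_2^{*_A}z\rangle|$. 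Since $\lambda(\cdot)$ depends only on $|\langle\cdot,\cdot\rangle|$, it is symmetric in its two vectors, so the angle is $\theta_2$ as stated. This yields
\[
|\langle S_2z,z\rangle_{A}|^2\le\lambda^2(\theta_2)\langle S_2S_2^{*_A}z,z\rangle_{A}\langle (S_2^{\dag}S_2)^{*_A}S_2^{\dag}S_2z,z\rangle_{A}.
\]

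\textbf{Combining the terms.} Set $\lambda(\theta)=\max\{\lambda^2(\theta_1),\lambda^2(\theta_2)\}$. Then $|\langle(S_1+S_2)z,z\rangle_{A}|^2$ is bounded by $\lambda(\theta)(\sqrt{a_1b_1}+\sqrt{a_2b_2})^2$, where the $a_i,b_i\ge0$ are the four quadratic forms above. We must not pass through $2(|\cdot|^2+|\cdot|^2)$ as in Theorem \ref{Theorem 2.9}, since that would cost a factor $2$ that the statement does not contain. Instead we apply Cauchy--Schwarz, $(\sqrt{a_1b_1}+\sqrt{a_2b_2})^2\le(a_1+a_2)(b_1+b_2)$, pairing the four factors in two different ways.

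First pairing: group $\langle S_1^{*_A}S_1z,z\rangle_{A}$ with $\langle (S_2^{\dag}S_2)^{*_A}S_2^{\dag}S_2z,z\rangle_{A}$, and group $\langle S_1S_1^{\dag}(S_1S_1^{\dag})^{*_A}z,z\rangle_{A}$ with $\langle S_2S_2^{*_A}z,z\rangle_{A}$. This gives the product
\[
\langle(S_1^{*_A}S_1+(S_2^{\dag}S_2)^{*_A}S_2^{\dag}S_2)z,z\rangle_{A}\,\langle(S_2S_2^{*_A}+S_1S_1^{\dag}(S_1S_1^{\dag})^{*_A})z,z\rangle_{A}.
\]
Second pairing: group $\langle S_1^{*_A}S_1z,z\rangle_{A}$ with $\langle S_2S_2^{*_A}z,z\rangle_{A}$, and group the two remaining forms together. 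This produces the second expression in the minimum.

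\textbf{Passing to norms.} Each operator in these sums is $A$-positive, since it has the form $T^{*_A}T$ or $TT^{*_A}$. So each quadratic form is at most the $A$-norm of the corresponding sum. Taking the minimum of the two pairings and then the supremum over $\|z\|_{A}=1$ gives (\ref{(28)}).

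\textbf{Main obstacle.} The only delicate point is the $S_2$ term. We must check that the angle arising from (\ref{(14)}) matches $\theta_2$ as written, with the vectors in reversed order; this holds by the symmetry of $\lambda$ noted above. We must also keep the constant at $1$ by using Cauchy--Schwarz rather than convexity. Everything else is routine.
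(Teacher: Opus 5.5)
Your argument is the paper's proof step for step: the triangle inequality, then (13) for $S_1$ and (14) for $S_2$ with $y=z$, then $d_1d_2+d_3d_4\le\sqrt{(d_1^2+d_3^2)(d_2^2+d_4^2)}$ with the two pairings, and finally the supremum. Your symmetry remark is exactly what justifies the order of the vectors in $\theta_2$. Everything up to the pointwise estimate
\[
|\langle (S_1+S_2)z,z\rangle_{A}|^2\le \max\{\lambda^2(\theta_1),\lambda^2(\theta_2)\}\,\min\{\cdots\},\qquad \|z\|_{A}=1,
\]
is sound, with $\theta_1,\theta_2$ computed at that same $z$. The gap is your last sentence, and the paper's proof makes the same move. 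The factor $\lambda(\theta)$ depends on $z$. Taking the supremum of the left-hand side therefore only gives $\omega_{A}^2(S_1+S_2)\le\bigl(\sup_{\|z\|_{A}=1}\max\{\lambda^2(\theta_1),\lambda^2(\theta_2)\}\bigr)\min\{\cdots\}$. It does not give (28) for a prescribed $z$. So the delicate point is not the angle ordering you flag, but this $z$-dependence.

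This is not just a matter of presentation: (28) with $z$ arbitrary is false. Take $A=I$, $\mathcal{H}=\mathbb{C}^2$ and $S_1=S_2=S=\mathrm{diag}(1,-1)$. Then $S^{\dag}=S^{*}=S$, and the four operators $S_1^{*}S_1$, $(S_2^{\dag}S_2)^{*}S_2^{\dag}S_2$, $S_2S_2^{*}$, $S_1S_1^{\dag}(S_1S_1^{\dag})^{*}$ all equal $I$. Hence both products in the minimum equal $4$, and $\omega^2(S_1+S_2)=\omega^2(2S)=4$. For $z=(1,1)^{t}/\sqrt{2}$ we have $\langle Sz,z\rangle=0$, so $\theta_1=\theta_2=\pi/2$ and $\lambda(\pi/2)=\int_0^1|2s-1|\,ds=\tfrac12$. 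The right-hand side of (28) is then $\tfrac14\cdot 4=1<4$.

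What your argument actually proves is one of the following:
\begin{itemize}
\item the pointwise bound above;
\item (28) with $\lambda(\theta)$ replaced by its supremum over $\|z\|_{A}=1$;
\item (28) with $\theta_1,\theta_2$ evaluated at a vector where $|\langle (S_1+S_2)z,z\rangle_{A}|$ attains $\omega_{A}(S_1+S_2)$, if such a vector exists.
\end{itemize}
A smaller point, also inherited from the paper: (13), (14) and your $A$-positivity claim require $S_iS_i^{\dag},S_i^{\dag}S_i\in\mathbb{B}_{A}(\mathcal{H})$. This does not follow from $S_i\in\mathbb{B}_{A}(\mathcal{H})$ having closed range; it is only presupposed by the appearance of these $A$-adjoints in (28).
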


\begin{proof}
Let $z\in{\mathcal{H}}$ where $\|z\|_{A}=1.$
We have \begin{equation*}
  |\langle (S_1+S_2)z,z \rangle_{A}|^2\le ( |\langle S_1z,z \rangle_{A}|+ |\langle S_2z,z\rangle_{A}|)^2.  
\end{equation*}
By using inequalities (\ref{(13)}) and (\ref{(14)}), respectively we arrive at  
\begin{equation*}
|\langle S_1z,z \rangle_{A}|\le  \lambda(\theta_1)\sqrt{\langle S_1^{*_A}S_1z,z \rangle_{A}\hspace{0.07cm}\langle S_1S_1^{\dag}(S_1S_1^{\dag})^{*_A}z,z\rangle_{A}},   
\end{equation*}
where $\theta_1=\angle_{\ll A^{\frac{1}{2}}S_1 z,\hspace{0.07cm}A^{\frac{1}{2}}(S_1 S_1^{\dag})^{*_A}z  \gg}$ 

and

\begin{equation*}
|\langle S_2z,z \rangle_{A}|\le  \lambda(\theta_2)\sqrt{\langle S_2S_2^{*_A}z,z \rangle_{A}\hspace{0.07cm}\langle (S_2^{\dag}S_2)^{*_A}S_2^{\dag}S_2z,z\rangle_{A}},   
\end{equation*}
where $\theta_2=\angle_{\ll A^{\frac{1}{2}}S_2^{*_A} z,\hspace{0.07cm}A^{\frac{1}{2}}S_2^{\dag}S_2z \gg}$. 

Utilizing the fact that $d_1d_2+d_3d_4\le\sqrt{(d_1^2+d_3^2)(d_2^2+d_4^2)}$ forall $d_1,d_2,d_3,d_4\in{\mathbb{R}},$ we get that
\begin{equation}
\begin{aligned}& \Big( |\langle S_1z,z \rangle_{A}|+ |\langle S_2z,z\rangle_{A}|\Big)^2
\\&\le\lambda(\theta) \left\langle \left(S_1^{*_A}S_1+S_2S_2^{*_A}\right)z,z\right\rangle_{A}\hspace{0.04cm}\left\langle \left( S_1S_1^{\dag}(S_1S_1^{\dag})^{*_A}+ (S_2^{\dag}S_2)^{*_A}S_2^{\dag}S_2\right)z,z\right\rangle_{A}\\&\le\lambda(\theta)\left\|S_1^{*_A}S_1+S_2S_2^{*_A}\right\|_{A}\left\|S_1S_1^{\dag}(S_1S_1^{\dag})^{*_A}+(S_2^{\dag}S_2)^{*_A}S_2^{\dag}S_2\right\|_{A}.\end{aligned}\notag\end{equation}
Considering different $d_1,d_2,d_3,d_4$ we also obtain that
\begin{equation}
\begin{aligned}& \Big( |\langle S_1z,z \rangle_{A}|+ |\langle S_2z,z\rangle_{A}|\Big)^2
\\&\le \lambda(\theta)\left\|S_1^{*_A}S_1+(S_2^{\dag}S_2)^{*_A}S_2^{\dag}S_2\right\|_{A}\left\|S_2S_2^{*_A}+S_1S_1^{\dag}(S_1S_1^{\dag})^{*_A}\right\|_{A}
\end{aligned}\notag\end{equation}

where   $\lambda(\theta)=\max\{\lambda^2(\theta_1), \lambda^2(\theta_2)\}.$

By deciding on the supremum over $z\in{\mathcal{H}}$ where $\|z\|_{A}=1$ in the above two inequalities, we obtain the inequality (\ref{(28)}).
\end{proof}

\begin{remark}
Letting   $S_1=S_2$  in Theorem \ref{Theorem 2.11}, we infer that
\begin{equation}\begin{aligned}
\omega^2_{A}(S_1)&\le\frac{\lambda(\theta)}{4}  \mbox{\footnotesize$
\min\Bigg\{\left\|S_1^{*_A}S_1+(S_1^{\dag}S_1)^{*_A}S_1^{\dag}S_1\right\|_{A}\left\|S_1S_1^{*_A}+S_1S_1^{\dag}(S_1S_1^{\dag})^{*_A}\right\|_{A},$}\\&\mbox{\footnotesize$\left\|S_1^{*_A}S_1+S_1S_1^{*_A}\right\|_{A}\left\|S_1S_1^{\dag}(S_1S_1^{\dag})^{*_A}+(S_1^{\dag}S_1)^{*_A}S_1^{\dag}S_1\right\|_{A}\Bigg\}$}\\&\le\frac{\lambda(\theta)}{4}  \mbox{\footnotesize$
\min\Bigg\{\left\|S_1^{*_A}S_1+(S_1^{\dag}S_1)^{*_A}S_1^{\dag}S_1\right\|_{A}\left\|S_1S_1^{*_A}+S_1S_1^{\dag}(S_1S_1^{\dag})^{*_A}\right\|_{A},$}\\&\mbox{\footnotesize$2\left\|S_1^{*_A}S_1+S_1S_1^{*_A}\right\|_{A}\Bigg\}$}\\&\le\frac{1}{2}\left\|S_1^{*_A}S_1+S_1S_1^{*_A}\right\|_{A}.\end{aligned}\notag\end{equation}
\end{remark}

\section{ \texorpdfstring{$\mathbb{A-}$D} Davis-Wielandt radius inequalities for operator matrices and \texorpdfstring{$A-$w} weighted numerical radius bounds}\label{sec3}
We take the diagonal operator matrix working on the Hilbert space $\mathbb{H}=\sum\limits_{i=1}^n\bigoplus\mathcal{H}$ ( direct sum of $n-$copies of $\mathcal{H}$), where each diagonal entry is the positive operator $A:$

$\mathbb{A}= \begin{bmatrix}
    
A&&\\
        &\ddots& \\
          &&A
\end{bmatrix}.$
The positive operator $\mathbb{A}$ induces the following semi-inner product:
$$\langle z,w \rangle_{\mathbb{A}}=\langle \mathbb{A}z,w \rangle=\sum_{j=1}^n \langle Az_{j},w_{j} \rangle=\sum_{j=1}^n \langle z_{j},w_{j} \rangle_{A},$$ for all
$z=(z_1,z_2,\cdots,z_n)$ and $w=(w_1,w_2,\cdots,w_n)\in{\mathbb{H}}.$

Various numerical radius inequalities for $n\times n$ operator matrices have been established by different authors, as referenced in [\citenum{abu2015numerical, bani2009numerical, hou1995norm, bhunia2024sharper}].

In 2023, some Davis-Wielandt radius inequalities for $n\times n$ operator matrices are derived by M.W. Alomari, which is demonstrated in [\citenum{alomari2023davis}].

In our work, we generalize and refine the existing Davis-Wielandt radius inequalities estimated by M.W. Alomari in [\citenum{alomari2023davis}].

To establish our new results we need the following lemmas.
\begin{lemma}\label{Lemma 3.1}

[\citenum{bohr1914theorem}]
For $j=1,2,\cdots,k,$ let $b_j$   be a positive real numbers. Then \begin{equation*}
 \left(\sum\limits_{j=1}^k b_j\right)^n \le k^{n-1}\sum\limits_{j=1}^k b_{j}^n\hspace{1cm} \text{ for all $n\ge1.$}   
\end{equation*}
\end{lemma}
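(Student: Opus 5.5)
The inequality of Lemma \ref{Lemma 3.1} is the power-mean (or Jensen) inequality for the function $h(t)=t^n$, which is convex on $[0,\infty)$ when $n\ge 1$. The plan is to normalize the sum into an arithmetic mean and then apply convexity.

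First, divide the desired inequality by $k^n$. It then becomes
\begin{equation*}
\left(\frac{1}{k}\sum_{j=1}^k b_j\right)^n\le \frac{1}{k}\sum_{j=1}^k b_j^n .
\end{equation*}

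Second, note that $h(t)=t^n$ has $h''(t)=n(n-1)t^{n-2}\ge 0$ for $t>0$ and $n\ge1$, so $h$ is convex on $(0,\infty)$. This covers real $n\ge 1$ as well, and the case $n=1$ is a trivial equality.

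Third, apply the finite Jensen inequality $h\big(\sum_j \alpha_j b_j\big)\le \sum_j \alpha_j h(b_j)$ with the equal weights $\alpha_j=1/k$. If one prefers to avoid invoking Jensen directly, it can be proved by induction on $k$ from the two-point definition of convexity. Writing
\begin{equation*}
\frac{1}{k}\sum_{j=1}^k b_j=\frac{k-1}{k}\Big(\frac{1}{k-1}\sum_{j=1}^{k-1}b_j\Big)+\frac1k\, b_k ,
\end{equation*}
we apply the two-point inequality to this convex combination and then the induction hypothesis to the inner mean.

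Multiplying back by $k^n$ gives the stated inequality $\left(\sum_{j=1}^k b_j\right)^n \le k^{n-1}\sum_{j=1}^k b_j^n$.

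None of these steps is a real obstacle; the only care needed is in the inductive Jensen step. An alternative for integer $n$ is to apply H\"older's inequality with exponents $n$ and $n/(n-1)$ to $\sum_j 1\cdot b_j$. This gives $\sum_j b_j\le k^{(n-1)/n}\big(\sum_j b_j^n\big)^{1/n}$, and raising both sides to the $n$-th power yields the claim directly.
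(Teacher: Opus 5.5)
Your proof is correct. The paper does not prove this lemma; it only quotes it from the literature via the citation, so your Jensen argument gives a self-contained justification for something the paper takes on authority. Each step checks out:
\begin{itemize}
\item the normalization by $k^n$;
\item the convexity of $t\mapsto t^n$ for every real $n\ge 1$;
\item the induction on $k$ through the convex combination $\frac{k-1}{k}\big(\frac{1}{k-1}\sum_{j=1}^{k-1}b_j\big)+\frac1k b_k$.
\end{itemize}

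Two small remarks on your route. First, your H\"older alternative is not limited to integer $n$. With exponents $n/(n-1)$ and $n$ it works verbatim for every real $n>1$, and $n=1$ is trivial. It is also the most economical proof, since it needs no induction. Second, $t^n$ is continuous and convex on $[0,\infty)$, so your argument covers $b_j\ge 0$ as well as $b_j>0$. That is the version the paper actually needs, because it applies the lemma to quantities such as $|\langle S_{ij}z_j,z_i\rangle_A|$, which may vanish.

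One caution about how the lemma is used downstream, in the proof of Theorem 3.4. There it is applied with exponent $2$ to the double sum over $i,j=1,\dots,n$. That sum has $k=n^2$ terms, so the lemma yields the constant $n^2$, not the constant $n$ written in that step.
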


\begin{lemma}\label{Lemma 3.2}

[\citenum{qiao2022numerical}, Lemma 2.4]
Let $S\in {\mathbb{B}_{A}(\mathcal{H})}.$  Then 
\begin{equation*}
 |\langle Sy,z \rangle_{A}|^2\le \|Sy\|_{A}\hspace{0.05cm}\|S^{*_A}z\|_{A}=\sqrt{\langle S^{*_A}Sy,y\rangle_{A}}\sqrt{\langle SS^{*_A}z,z\rangle_{A}}   
\end{equation*} for any $y,z\in{\mathcal{H}}$  where $\|y\|_{A}=\|z\|_{A}=1.$
\end{lemma}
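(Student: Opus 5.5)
This is the $A$-weighted mixed Schwarz inequality. The plan is to pass to the Hilbert space $\mathcal{H}$ through $A^{\frac12}$ and reduce to the classical mixed Schwarz inequality
\[
|\langle Tx,u\rangle|^2\le\langle |T|x,x\rangle\,\langle |T^*|u,u\rangle .
\]
In the $A$-setting, the role of $|T|$ is played by $(S^{*_A}S)^{\frac12}$ with respect to the semi-inner product. The right-hand side of Lemma \ref{Lemma 3.2} is $\|Sy\|_A\|S^{*_A}z\|_A$, and this equals $\sqrt{\langle S^{*_A}Sy,y\rangle_A}\,\sqrt{\langle SS^{*_A}z,z\rangle_A}$ directly from the definitions:
\[
\langle S^{*_A}Sy,y\rangle_A=\langle Sy,Sy\rangle_A=\|Sy\|_A^2,
\]
and likewise $\langle SS^{*_A}z,z\rangle_A=\|S^{*_A}z\|_A^2$, using $(S^{*_A})^{*_A}S^{*_A}\cdot$ or simply $\langle SS^{*_A}z,z\rangle_A=\langle S^{*_A}z,S^{*_A}z\rangle_A$. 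The latter holds because $S^{*_A}$ is an $A$-adjoint of $S$. So the equality part is immediate.

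For the inequality, I would use the normalization $\|y\|_A=\|z\|_A=1$ together with the plain $A$-Cauchy--Schwarz inequality in two ways. First,
\[
|\langle Sy,z\rangle_A|\le\|Sy\|_A\|z\|_A=\|Sy\|_A .
\]
Second, via the adjoint,
\[
|\langle Sy,z\rangle_A|=|\langle y,S^{*_A}z\rangle_A|\le\|y\|_A\|S^{*_A}z\|_A=\|S^{*_A}z\|_A .
\]
Both steps are valid because $\langle\cdot,\cdot\rangle_A$ is a positive semidefinite sesquilinear form, so Cauchy--Schwarz holds for it, equivalently as $|\langle A^{\frac12}u,A^{\frac12}v\rangle|\le\|A^{\frac12}u\|\|A^{\frac12}v\|$. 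Multiplying the two bounds gives $|\langle Sy,z\rangle_A|^2\le\|Sy\|_A\|S^{*_A}z\|_A$, which is the claim.

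The only point needing care is justifying $\langle y,S^{*_A}z\rangle_A=\langle Sy,z\rangle_A$ for the specific operator $S^{*_A}=A^{\dag}S^*A$. This follows from $AS^{*_A}=S^*A$, the defining equation recalled in the introduction: $\langle Ay,S^{*_A}z\rangle=\langle y,AS^{*_A}z\rangle=\langle y,S^*Az\rangle=\langle Sy,Az\rangle$. No positivity or square-root functional calculus is needed, so I expect no real obstacle. The geometric-mean form, rather than the sharper $|T|^{2\alpha}$ version, is exactly what the two-fold Cauchy--Schwarz produces.
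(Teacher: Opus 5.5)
Your proof is correct and complete. The paper gives no proof of its own: it imports the statement from Qiao et al.\ and uses it only as a black box in the proof of Theorem~\ref{Theorem 3.4}, so there is nothing to match step by step.

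Your argument is self-contained. You apply the $A$-Cauchy--Schwarz inequality twice, once directly and once after moving $S$ across via $\langle Sy,z\rangle_A=\langle y,S^{*_A}z\rangle_A$, which you correctly derive from $AS^{*_A}=S^*A$. This gives $|\langle Sy,z\rangle_A|\le\|Sy\|_A$ and $|\langle Sy,z\rangle_A|\le\|S^{*_A}z\|_A$, and multiplying the two yields the claim. The identities $\langle S^{*_A}Sy,y\rangle_A=\|Sy\|_A^2$ and $\langle SS^{*_A}z,z\rangle_A=\|S^{*_A}z\|_A^2$ also follow from the $A$-adjoint relation, as you say.

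What your route buys is transparency. It shows that the lemma, in the form stated here, is just the geometric mean of two trivial Cauchy--Schwarz bounds. It is implied by, and weaker than, a genuine mixed-Schwarz-type estimate involving $(S^{*_A}S)^{1/2}$. So no functional calculus, no transfer through $A^{\frac12}$, and no H\"older--McCarthy step is needed.

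Two cosmetic points:
\begin{itemize}
\item Your opening paragraph announces a reduction to the classical mixed Schwarz inequality that you never carry out and do not need. Delete it.
\item Delete the incomplete fragment ``using $(S^{*_A})^{*_A}S^{*_A}\cdot$'', since $(S^{*_A})^{*_A}\neq S$ in general. The justification you give right after it is the correct one: $\langle SS^{*_A}z,z\rangle_A=\langle S^{*_A}z,S^{*_A}z\rangle_A$, by the $A$-adjoint property applied to the vector $S^{*_A}z$.
\end{itemize}
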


\begin{lemma}\label{Lemma 3.3}

[\citenum{horn1991topics}, p.44]
 Let $S=[s_{ij}]\in{\mathbb{M}_{n}(\mathbb{C})}$  such that $s_{ij}\ge0$ for  all $i,j=1,2,\cdots,n.$ Then \begin{equation*}
 \omega(S)=\frac{1}{2}r(s_{ij}+s_{ji}).    \end{equation*}
\end{lemma}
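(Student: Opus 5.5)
The plan is to show that both $\omega(S)$ and $\frac{1}{2}r([s_{ij}+s_{ji}])$ equal the largest value of the quadratic form of the symmetric part of $S$ over nonnegative unit vectors. Set $H=\frac{1}{2}(S+S^{T})=\big[\tfrac{1}{2}(s_{ij}+s_{ji})\big]$. This is a real symmetric matrix with nonnegative entries, so $\frac{1}{2}r([s_{ij}+s_{ji}])=r(H)$. It therefore suffices to prove $\omega(S)=r(H)$, and I will prove the two inequalities separately.

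\emph{Upper bound.} Take a unit vector $x\in\mathbb{C}^n$ and put $u=(|x_1|,\dots,|x_n|)^{T}$. Then $u$ is also a unit vector, and its entries are nonnegative. Since every $s_{ij}\ge 0$, the triangle inequality gives
\begin{equation*}
|\langle Sx,x\rangle|=\Big|\sum_{i,j}s_{ij}x_j\overline{x_i}\Big|\le\sum_{i,j}s_{ij}|x_i||x_j|=u^{T}Su .
\end{equation*}
For a real vector the quadratic form only sees the symmetric part, so $u^{T}Su=u^{T}Hu$. Because $H$ is real symmetric, $u^{T}Hu\le\lambda_{\max}(H)\le r(H)$. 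Taking the supremum over $x$ yields $\omega(S)\le r(H)$.

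\emph{Lower bound.} This is the step that needs a genuine input, namely Perron--Frobenius. Because $H$ is symmetric, its spectral radius is $\max\{\lambda_{\max}(H),-\lambda_{\min}(H)\}$. Because $H$ is also entrywise nonnegative, Perron--Frobenius (in its weak form, which needs no irreducibility) says that $r(H)$ is itself an eigenvalue of $H$. That eigenvalue has an eigenvector $v\ge 0$, which we normalise so that $\|v\|=1$. Since $v$ is real,
\begin{equation*}
\langle Sv,v\rangle=v^{T}Sv=v^{T}Hv=r(H).
\end{equation*}
Hence $\omega(S)\ge r(H)$.

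If one wants to avoid citing Perron--Frobenius, there is an alternative route. Take $w$ to be a unit eigenvector for the eigenvalue of largest modulus, so that $|w^{T}Hw|=r(H)$. Then
\begin{equation*}
r(H)=|w^{T}Hw|\le|w|^{T}H|w|\le\lambda_{\max}(H),
\end{equation*}
which forces $\lambda_{\max}(H)=r(H)$. Moreover, the nonnegative unit vector $|w|$ attains this value, so it can play the role of $v$ above. Combining the two inequalities gives $\omega(S)=r(H)=\frac{1}{2}r([s_{ij}+s_{ji}])$.
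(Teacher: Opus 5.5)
Your proof is correct. Note that the paper gives no proof of this lemma; it only cites Horn and Johnson's \emph{Topics in Matrix Analysis}. So your argument supplies the justification the paper omits, and the useful comparison is with the most economical route.

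Your upper bound is the standard one: pass from $x$ to $u=|x|$, use $s_{ij}\ge 0$ in the triangle inequality, then bound the Rayleigh quotient of $H$.

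For the lower bound, however, your remark that Perron--Frobenius is the ``genuine input'' overstates things. Since $S$ has real entries, $H=\frac{1}{2}(S+S^{T})=\frac{1}{2}(S+S^{*})$ is exactly the Hermitian part of $S$. Hence $\langle Hx,x\rangle=\Re\langle Sx,x\rangle$, and therefore $\omega(S)\ge\omega(H)=\|H\|=r(H)$ for \emph{every} real matrix $S$. Concretely, take a real unit eigenvector $v$ of $H$ for an eigenvalue $\lambda$ with $|\lambda|=r(H)$. It already gives $|\langle Sv,v\rangle|=|\lambda|=r(H)$, whether or not $v\ge 0$. So nonnegativity of the entries is used only in the upper bound, and the lemma is entirely elementary.

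What Perron--Frobenius adds is finer information: an explicit nonnegative maximizer (a Perron vector of $H$) with $\langle Sv,v\rangle=r(H)$, i.e.\ $r(H)\in W(S)$. This is not needed for the equality. It also already follows from your own upper-bound argument, which shows $\omega(S)=\max\{u^{T}Su:\ u\ge 0,\ \|u\|=1\}$, combined with the elementary lower bound. Your alternative route is in effect a reproof of the weak Perron--Frobenius statement $\lambda_{\max}(H)=r(H)$ for symmetric nonnegative matrices.

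One small notational point in that alternative route: choose $w$ real, which is possible because $H$ is real symmetric, or write $w^{*}Hw$. For complex $w$ the expression $w^{T}Hw$ need not equal the eigenvalue.
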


In the following result, we establish a new $\mathbb{A}-$ Davis-Wielandt radius inequality for operator matrices.

\begin{theorem}\label{Theorem 3.4}
Let $S_{ij}\in{\mathbb{B}_{A}(\mathcal{H})}$ for all $i,j=1,2,\cdots,n.$ Then
\begin{equation}
\begin{aligned}\label{(29)}
d\omega_{\mathbb{A}}\left(\begin{bmatrix}
S_{11} & S_{12} & \cdots & S_{1n} \\
S_{21} & S_{22} & \cdots & S_{2n} \\
\vdots & \vdots & \ddots & \vdots \\
S_{n1} & S_{n2} & \cdots & S_{nn}
\end{bmatrix}\right)\le \sqrt{\omega(\mathbb{\breve{S}})}    
\end{aligned}  \tag{29}  
\end{equation}
 where $\mathbb{\breve{S}}=[\widetilde{s_{ij}}]$ and
 \begin{equation}
    \widetilde{s_{ij}}=n\cdot\begin{cases}
\omega_{A}^2\left(S_{ii}\right)+\|S_{ii}\|_{A}^4  & \text{if } i=j \\ \mbox{\footnotesize$\sqrt{\left\|S^{*_A}_{ij}S_{ij}+S_{ji}S^{*_A}_{ji}\right\|_{A}\left\|S_{ij}S^{*_A}_{ij}+S^{*_A}_{ji}S_{ji}\right\|_{A}}+\left\|(S^{*_A}_{ij}S_{ij})^2+(S^{*_A}_{ji}S_{ji})^2\right\|_{A}$}
& \text{if } i<j\\0 &  \text{elsewhere}\end{cases}.\notag\end{equation}

\end{theorem}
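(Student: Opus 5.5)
Fix $z=(z_1,\dots,z_n)\in\mathbb{H}$ with $\|z\|_{\mathbb{A}}=1$ and write $\mathbb{S}=[S_{ij}]$. The aim is to bound $|\langle \mathbb{S}z,z\rangle_{\mathbb{A}}|^2+\|\mathbb{S}z\|_{\mathbb{A}}^4$ by a quadratic form $\langle \breve{\mathbb{S}}\,x,x\rangle$. Here $x=(x_1,\dots,x_n)$ with $x_i=\|z_i\|_A^2$, so each $x_i\ge 0$ and $\sum_i x_i=1$. The matrix $\breve{\mathbb{S}}$ has nonnegative entries, so $\langle \breve{\mathbb{S}}x,x\rangle\le\omega(\breve{\mathbb{S}})\,\|x\|^2$. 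I will actually work with $u_i=\|z_i\|_A$, which satisfies $\sum u_i^2=1$, and aim for $\langle \breve{\mathbb{S}}v,v\rangle$ with $v=(u_i^2)$. Then $\|v\|^2=\sum u_i^4\le 1$ gives the bound $\omega(\breve{\mathbb{S}})$, and taking the supremum and a square root yields (29).

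\textbf{Main step: splitting into pairs of blocks.} Write
\[
\langle \mathbb{S}z,z\rangle_{\mathbb{A}}=\sum_{i,j}\langle S_{ij}z_j,z_i\rangle_A,\qquad
\|\mathbb{S}z\|_{\mathbb{A}}^2=\sum_i\Big\|\sum_j S_{ij}z_j\Big\|_A^2 .
\]
Group the terms into the $n$ diagonal terms and the $\binom n2$ unordered pairs $\{i,j\}$ with $i<j$. Lemma \ref{Lemma 3.1}, applied to the square (and to the square of the norm expression, after using the triangle inequality), trades the sum for a sum of squares. This is where the factor $n$ in $\widetilde{s_{ij}}$ comes from, so the counting has to be arranged into $n$ groups rather than $n^2$. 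I would group by the "diagonal lines" $j-i\equiv k \pmod n$, or simply bound the overcount crudely and absorb the constant.

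For the diagonal terms, $|\langle S_{ii}z_i,z_i\rangle_A|^2\le\omega_A^2(S_{ii})u_i^4$ and $\|S_{ii}z_i\|_A^4\le\|S_{ii}\|_A^4u_i^4$. Together these give $\frac{1}{n}\widetilde{s_{ii}}v_i^2$.

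For an off-diagonal pair $i<j$:
\begin{itemize}
\item Apply Lemma \ref{Lemma 3.2} to $|\langle S_{ij}z_j,z_i\rangle_A|$ and $|\langle S_{ji}z_i,z_j\rangle_A|$. This gives products of the form $\langle S_{ij}^{*_A}S_{ij}z_j,z_j\rangle_A^{1/2}\langle S_{ij}S_{ij}^{*_A}z_i,z_i\rangle_A^{1/2}$, and similarly with $i,j$ swapped.
\item Combine the two terms with $d_1d_2+d_3d_4\le\sqrt{(d_1^2+d_3^2)(d_2^2+d_4^2)}$ in the form used for Theorem \ref{Theorem 2.11}. This produces $\|S_{ij}^{*_A}S_{ij}+S_{ji}S_{ji}^{*_A}\|_A^{1/2}\|S_{ij}S_{ij}^{*_A}+S_{ji}^{*_A}S_{ji}\|_A^{1/2}$ times a factor in $u_i,u_j$, which should be bounded by $u_iu_j$ or by $u_i^2u_j^2$ after squaring.
\item For the Davis–Wielandt part, bound the cross contributions $\|S_{ij}z_j\|_A^4$ and $\|S_{ji}z_i\|_A^4$ via $\langle S^{*_A}Sz,z\rangle_A^2\le\langle (S^{*_A}S)^2z,z\rangle_A\,\|z\|_A^2$ (Lemma \ref{Lemma 2.8} with $n=2$). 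After normalization this yields the term $\|(S_{ij}^{*_A}S_{ij})^2+(S_{ji}^{*_A}S_{ji})^2\|_A$ multiplying $u_i^2u_j^2$.
\end{itemize}
Finally, place $\widetilde{s_{ij}}$ in the upper triangle and $0$ in the lower triangle. The quadratic form $\sum_{i<j}\widetilde{s_{ij}}v_iv_j$ is then exactly $\langle\breve{\mathbb{S}}v,v\rangle$, which is real and bounded by $\omega(\breve{\mathbb{S}})\|v\|^2$. Lemma \ref{Lemma 3.3} guarantees that this numerical radius is the correct comparison quantity for nonnegative matrices.

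\textbf{Expected obstacle.} The delicate part is the homogeneity bookkeeping. The $\omega$-part of the DW radius is quadratic in the $u$'s, while the $\|\cdot\|^4$ part is quartic, and the stated entries mix $\sqrt{\|\cdot\|\|\cdot\|}$ (degree two in $S$) with degree-four quantities, all multiplying the same $v_iv_j$. I would handle this by using $u_i\le 1$ to degrade $u_iu_j$-type factors to $u_i^2u_j^2$-compatible ones, since the quartic terms dominate. If that fails, I would accept working with $x=(u_i)$ and $\|x\|=1$ for the squared-radius part.
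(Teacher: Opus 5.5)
Your plan cannot be completed, because inequality (29) is false as stated. The step of your proposal that breaks is the treatment of the quartic term $\|\mathbb{S}z\|_{\mathbb{A}}^4$.

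Here is a counterexample. Take $A=I$, $n=2$ and $S_{ij}=I$ for all $i,j$ (e.g.\ $\mathcal{H}=\mathbb{C}$).
\begin{itemize}
\item Since $\omega_A(S_{ii})=\|S_{ii}\|_A=1$, we get $\widetilde{s_{11}}=\widetilde{s_{22}}=2(1+1)=4$, $\widetilde{s_{12}}=2(\sqrt{2\cdot 2}+2)=8$ and $\widetilde{s_{21}}=0$.
\item Hence $\omega(\breve{\mathbb{S}})=\max_{|a|^2+|b|^2=1}|4+8b\bar a|=8$, so the right-hand side of (29) is $\sqrt{8}$.
\item For $z=(x,x)/\sqrt{2}$ with $\|x\|=1$ one has $\langle\mathbb{S}z,z\rangle=2$ and $\|\mathbb{S}z\|^2=4$. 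Hence $d\omega_{\mathbb{A}}(\mathbb{S})\ge\sqrt{4+16}=\sqrt{20}>\sqrt{8}$.
\end{itemize}
Replacing the factor $n$ by $n^2$ would only raise the bound to $4<\sqrt{20}$. The paper's own numerical example after Corollary \ref{Corollary 3.5} also violates the bound. The $4\times 4$ matrix there has row sums $5,2,4,5$, so $z=\frac12(1,1,1,1)$ gives $\|\mathbb{S}z\|^2=17.5$, whence $d\omega(\mathbb{S})\ge 17.5>11.5955$.

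Concretely, your argument fails at the following points.
\begin{itemize}
\item The quantity $\|\mathbb{S}z\|_{\mathbb{A}}^4=\big(\sum_i\|\sum_j S_{ij}z_j\|_A^2\big)^2$ contains mixed terms such as $\|S_{11}z_1+S_{12}z_2\|_A^4$. Splitting them off by the triangle inequality and Lemma \ref{Lemma 3.1} costs a factor $n^4$, not $n$. In the example, $\|\mathbb{S}z\|^4=16$ while $\sum_{i,j}\|S_{ij}z_j\|^4=1$.
\item Your bullet asserting $\|S_{ij}z_j\|_A^4+\|S_{ji}z_i\|_A^4\le\|(S_{ij}^{*_A}S_{ij})^2+(S_{ji}^{*_A}S_{ji})^2\|_A\,u_i^2u_j^2$ is false. $\|S_{ij}z_j\|_A$ depends on $z_j$ alone, so $z_i=0$ kills the right-hand side but not the left. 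Lemma \ref{Lemma 2.8} only yields $\|S_{ij}z_j\|_A^4\le\|(S_{ij}^{*_A}S_{ij})^2\|_A\,u_j^4$. That is a diagonal weight $v_j^2$, and the prescribed $\widetilde{s_{jj}}$ leaves no room for it.
\item Your fallback does not help. The bound $u_i\le 1$ gives $u_i^2u_j^2\le u_iu_j$, not the converse.
\item With the weights $v=(u_i^2)$, even the numerical-radius half fails with constant $n$. In the example $|\langle\mathbb{S}z,z\rangle|^2=4$, while the $\omega$-terms of $\langle\breve{\mathbb{S}}v,v\rangle$ add up to $2$.
\end{itemize}

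The paper's proof is no template either. In (31) it writes $\langle\mathbb{S}^{*_{\mathbb{A}}}\mathbb{S}z,z\rangle_{\mathbb{A}}=\sum_{i,j}\langle S_{ij}^{*_A}S_{ij}z_j,z_i\rangle_A$. However, the $(i,j)$ block of $\mathbb{S}^{*_{\mathbb{A}}}\mathbb{S}$ is $\sum_k S_{ki}^{*_A}S_{kj}$. Read with the true $\mathbb{S}^{*_{\mathbb{A}}}\mathbb{S}$, (32) asserts $16\le 6$ in the example.

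Only the $\omega$-half is salvageable, namely (30) with weights $u_iu_j$. There the factor $n$ does not come from Lemma \ref{Lemma 3.1} or from grouping by diagonals. It comes from the weighted Cauchy--Schwarz estimate $\big(\sum|a_{ij}|\big)^2\le\big(\sum_i u_i\big)^2\sum|a_{ij}|^2/(u_iu_j)\le n\sum|a_{ij}|^2/(u_iu_j)$. Here the sums run over pairs with $u_iu_j>0$ and $a_{ij}=\langle S_{ij}z_j,z_i\rangle_A$. This is combined with $|a_{ij}|^2\le\|S_{ij}z_j\|_A\|S_{ij}^{*_A}z_i\|_A\,u_iu_j$.
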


\begin{proof}
Let $z =[z_1, z_2,  \cdots  ,z_n
]^{t}\in{\mathbb{H}}$ 
where $\|z\|_{\mathbb{A}}=1$
and
$\tilde{z}_{A}=\begin{bmatrix}
\|z_1\|_{A}, & \|z_2\|_{A}, & \cdots & ,\|z_n\|_{A}
\end{bmatrix}^{t}.$ 

Then it is clear that $\tilde{z}_{A}\in{\mathbb{C}^n}$ is a unit vector.
Then we have,
\begin{equation}\begin{aligned}&|\langle Sz,z \rangle_{\mathbb{A}} |^2\\&= \left|\sum\limits_{i,j=1}^{n} \langle S_{ij}z_j,z_i \rangle_{A}\right|^2\\&\le n\cdot\sum\limits_{i,j=1}^{n} \left|\langle S_{ij}z_j,z_i \rangle_{A}\right|^2\hspace{1cm}(\text{by Lemma \ref{Lemma 3.1}})\\&=n \cdot\sum\limits_{i=1}^{n} \left|\langle S_{ii}z_i,z_i \rangle_{A}\right|^2+n \cdot\sum\limits_{\substack{i,j=1\\i \ne j}}^{n} \left|\langle S_{ij}z_j,z_i \rangle_{A}\right|^2\\&=n\cdot\sum\limits_{i=1}^{n} \left|\langle S_{ii}z_i,z_i \rangle_{A}\right|^2+n\cdot\sum\limits_{\substack{i,j=1\\i < j}}^{n}\big(\hspace{0.05cm}\left|\langle S_{ij}z_j,z_i \rangle_{A}\right|^2+\left|\langle S_{ji}z_i,z_j \rangle_{A}\right|^2\hspace{0.05cm}\big).\end{aligned}\notag\end{equation}
By using  Lemma \ref{Lemma 3.2}, we see 
\begin{equation}\begin{aligned}
|\langle Sz,z \rangle_{\mathbb{A}} |^2&\le n\cdot\sum\limits_{i=1}^{n} \left|\langle S_{ii}z_i,z_i \rangle_{A}\right|^2+n\cdot\sum\limits_{\substack{i,j=1\\i < j}}^{n}\Bigg(\hspace{0.05cm}\sqrt{\left\langle S^{*_A}_{ij}S_{ij}z_j,z_j\right\rangle_{A}\left\langle S_{ij}S^{*_A}_{ij}z_i,z_i\right\rangle_{A}}+\\&\sqrt{\left\langle S^{*_A}_{ji}S_{ji}z_i,z_i\right\rangle_{A}\left\langle S_{ji}S^{*_A}_{ji}z_j,z_j\right\rangle_{A}}\hspace{0.05cm}\Bigg).\end{aligned}\notag\end{equation}
By using the fact $d_1d_2+d_3d_4\le\sqrt{(d_1^2+d_3^2)(d_2^2+d_4^2)}$, for $d_1,d_2,d_3,d_4\ge0$, we notice that
\begin{equation}\label{(30)}\begin{aligned}
|\langle Sz,z \rangle_{\mathbb{A}} |^2&\le n\cdot\sum\limits_{i=1}^{n} \left|\langle S_{ii}z_i,z_i \rangle_{A}\right|^2+n\cdot\sum\limits_{\substack{i,j=1\\i < j}}^{n}\Bigg(\hspace{0.05cm}\sqrt{\left\langle\left( S^{*_A}_{ij}S_{ij}+S_{ji}S^{*_A}_{ji}\right)z_j,z_j\right\rangle_{A}}\times\\&\sqrt{\left\langle \left(S_{ij}S^{*_A}_{ij}+S^{*_A}_{ji}S_{ji}\right)z_i,z_i\right\rangle_{A}}\Bigg)\hspace{0.01cm}\\&\mbox{\footnotesize $\le n \cdot\sum\limits_{i=1}^{n}\omega^2_{A}(S_{ii})\|z_{i}\|_{A}^2+n\cdot\sum\limits_{\substack{i,j=1\\i < j}}^{n}\sqrt{\left\|S^{*_A}_{ij}S_{ij}+S_{ji}S^{*_A}_{ji}\right\|_{A}\left\|S_{ij}S^{*_A}_{ij}+S^{*_A}_{ji}S_{ji}\right\|_{A}}\|z_{i}\|_{A}\|z_{j}\|_{A}$}.
\end{aligned}\tag{30}\end{equation}
By the similar approach, we derive
\begin{equation}\label{(31)}\begin{aligned}&|\langle S^{*_A}Sz,z \rangle_{\mathbb{A}} |^2\\&= \left|\sum\limits_{i,j=1}^{n} \langle S^{*_A}_{ij}S_{ij}z_j,z _i\rangle_{A}\right|^2\\&\le n \cdot\sum\limits_{i=1}^{n}\omega^2_{A}(S^{*_A}_{ii}S_{ii})\|z_{i}\|_{A}^2+n\cdot\sum\limits_{\substack{i,j=1\\i < j}}^{n}\Bigg(\sqrt{\left\|(S^{*_A}_{ij}S_{ij})^{*_A}S^{*_A}_{ij}S_{ij}+S^{*_A}_{ji}S_{ji}(S^{*_A}_{ji}S_{ji})^{*_A}\right\|_{A}}\times\\&\sqrt{\left\|S^{*_A}_{ij}S_{ij}(S^{*_A}_{ij}S_{ij})^{*_A}+(S^{*_A}_{ji}S_{ji})^{*_A}S^{*_A}_{ji}S_{ji}\right\|_{A}}\Bigg)\|z_{i}\|_{A}\|z_{j}\|_{A}.
\end{aligned}\tag{31}\end{equation}

Since $S^{*_A}_{ii}S_{ii},$ $S^{*_A}_{ij}S_{ij}$ and $S^{*_A}_{ji}S_{ji}$ are $A-$ self-adjoint, we can write the inequality (\ref{(31)}) as
\begin{equation}\label{(32)}
 \begin{aligned}
  |\langle S^{*_A}Sz,z \rangle_{\mathbb{A}} |^2\le n \cdot\sum\limits_{i=1}^{n}\|S_{ii}\|_{A}^4+n\cdot\sum\limits_{\substack{i,j=1\\i < j}}^{n}\left\|(S^{*_A}_{ij}S_{ij})^2+(S^{*_A}_{ji}S_{ji})^2\right\|_{A}\|z_{i}\|_{A}\|z_{j}\|_{A}.   
 \end{aligned}   
\tag{32}\end{equation}

We notice that, \begin{equation}\begin{aligned}
d\omega_{\mathbb{A}}(S)&=\sup\left\{\sqrt{|\langle Sz,z \rangle_{\mathbb{A}}|^2+\|Sz\|_{\mathbb{A}}^4}: z\in{\mathbb{H}}, \|z\|_{\mathbb{A}}=1\right\}\\& =\sup\left\{\sqrt{|\langle Sz,z \rangle_{\mathbb{A}}|^2+|\langle S^{*_\mathbb{A}}Sz,z \rangle_{\mathbb{A}}|^2}: z\in{\mathbb{H}}, \|z\|_{\mathbb{A}}=1\right\}.\end{aligned}\notag\end{equation}
Adding (\ref{(30)}), (\ref{(32)}) and then considering supremum over $z\in{\mathbb{H}}$ with $\|z\|_{\mathbb{A}}=1,$ we get that
\begin{equation}\begin{aligned}
d\omega_{\mathbb{A}}^2(S)&\le\sum\limits_{i,j=1}^n\widetilde{s_{ij}}\|z_i\|_{A}\hspace{0.06cm}\|z_j\|_{A}\\&=\langle\mathbb{\breve{S}} \tilde{z}_{A},\tilde{z}_{A}\rangle\\&=\omega(\mathbb{\breve{S}}) .    
\end{aligned}\notag\end{equation}
\end{proof}

Based on Theorem \ref{Theorem 3.4} and utilizing Lemma \ref{Lemma 3.3}, we find the following bound.

\begin{corollary}\label{Corollary 3.5}
Let $S_{ij}\in{\mathbb{B}_{A}(\mathcal{H})}$ for all $i,j=1,2.$
\begin{equation}\label{(33)}
d\omega_{\mathbb{A}}\left(\begin{bmatrix}
S_{11} & S_{12}  \\
S_{21} & S_{22} 
\end{bmatrix}\right)\le \sqrt{a_1+a_3+\sqrt{(a_1-a_3)^2+a_2^{2}}}, 
\tag{33}\end{equation}
where 
\begin{equation*}\begin{aligned}
&a_1=\omega_{A}^2\left(S_{11}\right)+\|S_{11}\|_{A}^4, \hspace{1cm} a_3=\omega_{A}^2\left(S_{22}\right)+\|S_{22}\|_{A}^4,\\&a_2=\mbox{\footnotesize$\sqrt{\left\|S^{*_A}_{12}S_{12}+S_{21}S^{*_A}_{21}\right\|_{A}\left\|S_{12}S^{*_A}_{12}+S^{*_A}_{21}S_{21}\right\|_{A}}+\left\|(S^{*_A}_{12}S_{12})^2+(S^{*_A}_{21}S_{21})^2\right\|_{A}$}.\end{aligned}\end{equation*}
\end{corollary}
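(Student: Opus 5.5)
The plan is to specialize Theorem \ref{Theorem 3.4} to $n=2$ and then compute the numerical radius of the resulting $2\times 2$ scalar matrix exactly, using Lemma \ref{Lemma 3.3}. No new operator estimate is needed. All the analytic work is already in (\ref{(29)}), so what remains is linear algebra on a nonnegative $2\times2$ matrix.

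First, with $n=2$ the definition of $\widetilde{s_{ij}}$ in Theorem \ref{Theorem 3.4} gives
\begin{equation*}
\mathbb{\breve{S}}=2\begin{bmatrix} a_1 & a_2\\ 0 & a_3\end{bmatrix},
\end{equation*}
with $a_1,a_2,a_3$ exactly as in the statement. The $(1,1)$ and $(2,2)$ entries come from the case $i=j$, the $(1,2)$ entry from the case $i<j$, and the $(2,1)$ entry vanishes. Every entry is a nonnegative real number, since each is built from seminorms, $A$-numerical radii and square roots of products of seminorms. Hence Lemma \ref{Lemma 3.3} applies, and $\omega(\mathbb{\breve{S}})=\frac12 r(\mathbb{\breve{S}}+\mathbb{\breve{S}}^{t})$, where
\begin{equation*}
\mathbb{\breve{S}}+\mathbb{\breve{S}}^{t}=2\begin{bmatrix} 2a_1 & a_2\\ a_2 & 2a_3\end{bmatrix}.
\end{equation*}

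Next, I compute the spectral radius of this real symmetric matrix. Its eigenvalues are real, and the largest one equals its spectral radius, because the matrix is entrywise nonnegative (equivalently, because its trace is nonnegative). The eigenvalues of $\begin{bmatrix} 2a_1 & a_2\\ a_2 & 2a_3\end{bmatrix}$ are
\begin{equation*}
a_1+a_3\pm\sqrt{(a_1-a_3)^2+a_2^2}.
\end{equation*}
The factor $2$ in front cancels the $\frac12$ from Lemma \ref{Lemma 3.3}, so
\begin{equation*}
\omega(\mathbb{\breve{S}})=a_1+a_3+\sqrt{(a_1-a_3)^2+a_2^2}.
\end{equation*}
Substituting this into (\ref{(29)}) gives (\ref{(33)}).

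The only step needing care is bookkeeping. Theorem \ref{Theorem 3.4} places the off-diagonal contribution only in the upper triangle, and we must track the factor $n=2$ correctly through the symmetrization in Lemma \ref{Lemma 3.3}. Otherwise one gets a spurious factor of $2$ or $\tfrac12$ in front of $a_2$ or in front of the whole bound. I would double-check this by the sanity case $S_{12}=S_{21}=0$. There the bound should reduce to $\sqrt{2\max\{a_1,a_3\}}$, which is what the $n$-fold Bohr factor in Theorem \ref{Theorem 3.4} produces.
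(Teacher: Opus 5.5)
Your reduction is the same as the paper's: put $n=2$ in Theorem~\ref{Theorem 3.4}, symmetrize with Lemma~\ref{Lemma 3.3}, and take the top eigenvalue of $\begin{bmatrix}2a_1&a_2\\ a_2&2a_3\end{bmatrix}$. Your bookkeeping is correct, and cleaner than the paper's display, which writes $d\omega_{\mathbb{A}}\le 2\omega(\cdot)$ and drops the square root from (\ref{(29)}).

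The trouble is that the result you import is false, and so is (\ref{(33)}) itself. Take $\mathcal{H}=\mathbb{C}$, $A=1$ and $S_{11}=S_{12}=S_{21}=S_{22}=1$. Then $a_1=a_3=2$ and $a_2=\sqrt{2\cdot 2}+2=4$, so the right-hand side of (\ref{(33)}) is $\sqrt{8}$. The block matrix is $\begin{bmatrix}1&1\\1&1\end{bmatrix}$. At $z=\frac{1}{\sqrt2}(1,1)$ it gives $\langle Sz,z\rangle=2$ and $\|Sz\|^2=4$, so $d\omega_{\mathbb{A}}(S)\ge\sqrt{4+16}=\sqrt{20}>\sqrt8$. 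The numerical example in the remark after the corollary fails too: for that $4\times 4$ matrix, $z=\frac12(1,1,1,1)$ gives $\|Sz\|^2=17.5$, so $d\omega\ge 17.5>11.5955$.

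Your passage from (\ref{(29)}) to (\ref{(33)}) is valid, so the defect sits in Theorem~\ref{Theorem 3.4}. Its proof has two concrete errors.
\begin{enumerate}
\item The sum $\sum_{i,j=1}^n$ has $n^2$ summands, so Lemma~\ref{Lemma 3.1} yields the factor $n^2$, not $n$. In the example, $|\langle Sz,z\rangle|^2=4$ while $n\sum_{i,j}|\langle S_{ij}z_j,z_i\rangle|^2=2$.
\item The $(i,j)$ block of $S^{*_{\mathbb{A}}}S$ is $\sum_k S_{ki}^{*_A}S_{kj}$, not $S_{ij}^{*_A}S_{ij}$. So (\ref{(31)})--(\ref{(32)}) do not control $\|Sz\|_{\mathbb{A}}^4$. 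In the example, $\|Sz\|_{\mathbb{A}}^2=4$, whereas $\sum_{i,j}\langle S_{ij}^{*_A}S_{ij}z_j,z_i\rangle_A=2$.
\end{enumerate}

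Your sanity check with $S_{12}=S_{21}=0$ cannot see either error, because both vanish for block-diagonal matrices. It also only tests agreement with Theorem~\ref{Theorem 3.4}, not with the true value of $d\omega_{\mathbb{A}}$. A test with nonzero off-diagonal blocks, like the one above, exposes the problem. No care with the constants will rescue (\ref{(33)}); a correct statement needs a different right-hand side, built from a repaired version of Theorem~\ref{Theorem 3.4}.
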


\begin{proof}
Letting  $a_1,a_2,a_3$ as described above and $n=2$ in   Theorem \ref{Theorem 3.4}, we get that
\begin{equation}\begin{aligned}d\omega_{\mathbb{A}}\left(\begin{bmatrix}
S_{11} & S_{12}  \\
S_{21} & S_{22} 
\end{bmatrix}\right)&\le2\omega\left(\begin{bmatrix}
a_1 &  a_2 \\
 0& a_3 
\end{bmatrix}\right)\\&=2r\left(\begin{bmatrix}
a_1 & \frac{a_2}{2} \\
 \frac{a_2}{2} & a_3 
\end{bmatrix}\right)\hspace{1cm}(\text{Lemma \ref{Lemma 3.3}})\\&=a_1+a_3+\sqrt{(a_1-a_3)^2+a_2^{2}},\end{aligned}\notag\end{equation} which provides the desired inequality (\ref{(33)}).
\end{proof}

\begin{remark}
Considering $A=I$ in Corollary  \ref{Corollary 3.5}, the inequality (\ref{(33)}) becomes \begin{equation}
d\omega\left(\begin{bmatrix}
S_{11} & S_{12}  \\
S_{21} & S_{22} 
\end{bmatrix}\right)\le \sqrt{a_1+a_3+\sqrt{(a_1-a_3)^2+a_2^{2}}}, 
\notag\end{equation}
where 
\begin{equation*}\begin{aligned}
&a_1=\omega^2\left(S_{11}\right)+\|S_{11}\|^4, \hspace{1cm} a_3=\omega^2\left(S_{22}\right)+\|S_{22}\|^4,\\&a_2=\mbox{\footnotesize$\sqrt{\left\||S_{12}|^2+|S^{*}_{21}|^2\right\|\left\||S^{*}_{12}|^2+|S_{21}|^2\right\|}+\left\||S_{12}|^4+|S_{21}|^4\right\|$}.\end{aligned}\end{equation*}
Now, \begin{equation*}
 \begin{aligned}
a_2&=\mbox{\footnotesize$\sqrt{\left\||S_{12}|^2+|S^{*}_{21}|^2\right\|\left\||S^{*}_{12}|^2+|S_{21}|^2\right\|}+\left\||S_{12}|^4+|S_{21}|^4\right\|$} \\&\le \|S_{12}\|^2+\|S_{21}\|^2+\|S_{12}\|^4+\|S_{21}\|^4\\&=a_4 \hspace{0.02cm}(\text{say}).
 \end{aligned}\end{equation*}

We address a numerical example:

Let $S_{11}=\begin{bmatrix}
    1&1\\0&1
\end{bmatrix},$ $S_{12}=\begin{bmatrix}
    1&2\\1&0
\end{bmatrix},$ $S_{21}=\begin{bmatrix}
    2&1\\1&1
\end{bmatrix}$ and $S_{22}=\begin{bmatrix}
    1&0\\2&1
\end{bmatrix}.$

By the calculation we obtain:
$a_1=9.1041,$ $ a_2=82.4745,$  $a_3=37.9706$ and $a_4=86.4853.$
 For this example, we get 
 \begin{align*}
d\omega\left(\begin{bmatrix}
S_{11} & S_{12}  \\
S_{21} & S_{22} 
\end{bmatrix}\right)\le \sqrt{a_1+a_3+\sqrt{(a_1-a_3)^2+a_2^{2}}}= 11.5955
\end{align*} and from
[\citenum{alomari2023davis}, Corollary 3.4], we estimate
\begin{align*}
d\omega\left(\begin{bmatrix}
S_{11} & S_{12}  \\
S_{21} & S_{22} 
\end{bmatrix}\right)\le \sqrt{a_1+a_3+\sqrt{(a_1-a_3)^2+a_4^{2}}}= 11.758.
\end{align*}
 Thus, the inequality (\ref{(33)}) generalizes and improves the inequality which is given in [\citenum{alomari2023davis}, Corollary 3.4].

\end{remark}

In the following result, we provide another $\mathbb{A}-$ Davis-Wielandt radius inequality for operator matrices.

\begin{theorem}\label{Theorem 3.7}
Let $S_{ij}\in{\mathbb{B}_{A}(\mathcal{H})}$ for all $i,j=1,2,\cdots,n.$ Then
\begin{equation}
\begin{aligned}\label{(34)}
d\omega_{\mathbb{A}}\left(\begin{bmatrix}
S_{11} & S_{12} & \cdots & S_{1n} \\
S_{21} & S_{22} & \cdots & S_{2n} \\
\vdots & \vdots & \ddots & \vdots \\
S_{n1} & S_{n2} & \cdots & S_{nn}
\end{bmatrix}\right)\le \omega(\mathbb{\breve{S}})    
\end{aligned}  \tag{34}  
\end{equation}
 where $\mathbb{\breve{S}}=[\widetilde{s_{ij}}]$ and
 \begin{equation}
    \widetilde{s_{ij}}=\begin{cases}
\omega_{A}\left(S_{ii}\right)+\|S_{ii}\|_{A}^2 & \text{if } i=j \\ \|S_{ij} \|_{A}+\|S_{ji}\|_{A}+\|S_{ij} \|^2_{A}+\|S_{ji}\|^2_{A}
& \text{if } i<j\\0 &  \text{elsewhere}\end{cases}.\notag\end{equation}
\end{theorem}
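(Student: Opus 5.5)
Unlike Theorem \ref{Theorem 3.4}, the bound (\ref{(34)}) has no square root on the right and no factor $n$. So the plan is to avoid squaring the whole quadratic form. We bound $|\langle Sz,z\rangle_{\mathbb{A}}|$ and $\|Sz\|^2_{\mathbb{A}}$ separately by quadratic forms in $\tilde z_A=[\|z_1\|_A,\dots,\|z_n\|_A]^t$, and then use the elementary inequality
\[
\sqrt{a^2+b^2}\le a+b\qquad (a,b\ge 0).
\]
Fix $z=[z_1,\dots,z_n]^t\in\mathbb{H}$ with $\|z\|_{\mathbb{A}}=1$, so that $\tilde z_A$ is a unit vector in $\mathbb{C}^n$ with nonnegative entries.

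\textbf{Step 1 (first coordinate).} Expand $\langle Sz,z\rangle_{\mathbb{A}}=\sum_{i,j}\langle S_{ij}z_j,z_i\rangle_A$ and apply the triangle inequality.
For the diagonal terms, $|\langle S_{ii}z_i,z_i\rangle_A|\le\omega_A(S_{ii})\|z_i\|_A^2$.
For the off-diagonal terms, use the Cauchy--Schwarz inequality for the $A$-semi-inner product: $|\langle S_{ij}z_j,z_i\rangle_A|\le\|S_{ij}\|_A\|z_j\|_A\|z_i\|_A$. This is valid since $S_{ij}\in\mathbb{B}_A(\mathcal{H})$ and $\|S_{ij}z_j\|_A\le\|S_{ij}\|_A\|z_j\|_A$.
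Pairing $(i,j)$ with $(j,i)$ for $i<j$ gives
\[
|\langle Sz,z\rangle_{\mathbb{A}}|\le \sum_i\omega_A(S_{ii})\|z_i\|_A^2+\sum_{i<j}\big(\|S_{ij}\|_A+\|S_{ji}\|_A\big)\|z_i\|_A\|z_j\|_A .
\]

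\textbf{Step 2 (second coordinate).} Write $\|Sz\|_{\mathbb{A}}^2=\langle S^{*_{\mathbb{A}}}Sz,z\rangle_{\mathbb{A}}$, as in the proof of Theorem \ref{Theorem 3.4}. Alternatively, and more safely, write it directly as $\sum_i\|\sum_j S_{ij}z_j\|_A^2$.
Bound $\|\sum_j S_{ij}z_j\|_A\le\sum_j\|S_{ij}\|_A\|z_j\|_A$, and aim to reach
\[
\|Sz\|^2_{\mathbb{A}}\le\sum_i\|S_{ii}\|_A^2\|z_i\|_A^2+\sum_{i<j}\big(\|S_{ij}\|_A^2+\|S_{ji}\|_A^2\big)\|z_i\|_A\|z_j\|_A .
\]
Following the structure the paper used in (\ref{(31)})--(\ref{(32)}), I would treat the $(i,j)$ blocks of $S^{*_{\mathbb{A}}}S$ termwise. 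Each entry $S_{ij}^{*_A}S_{ij}$ is bounded by $\|S_{ij}\|^2_A$ via $\|T^{*_A}T\|_A=\|T\|_A^2$. Cross terms are handled by $|\langle S_{ij}z_j,z_i\rangle_A|$-type estimates, with the pairing $i<j$ as in Step 1.

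\textbf{Step 3 (combining).} Adding the two estimates, we get
\[
\sqrt{|\langle Sz,z\rangle_{\mathbb{A}}|^2+\|Sz\|^4_{\mathbb{A}}}\le |\langle Sz,z\rangle_{\mathbb{A}}|+\|Sz\|^2_{\mathbb{A}}\le \sum_{i,j}\widetilde{s_{ij}}\,\|z_i\|_A\|z_j\|_A=\langle\breve{\mathbb{S}}\tilde z_A,\tilde z_A\rangle\le\omega(\breve{\mathbb{S}}).
\]
Taking the supremum over $z$ gives (\ref{(34)}).

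\textbf{Main obstacle.} The hard part is Step 2. Expanding $\|Sz\|^2_{\mathbb{A}}$ honestly produces products $\|S_{ki}\|_A\|S_{kj}\|_A$ from the same row, not the clean $\|S_{ij}\|^2_A+\|S_{ji}\|^2_A$ pattern. To land on the stated $\widetilde{s_{ij}}$, I would follow the paper's own route in (\ref{(31)}). That is, treat $\langle S^{*_{\mathbb{A}}}Sz,z\rangle_{\mathbb{A}}$ as $\sum_{i,j}\langle S_{ij}^{*_A}S_{ij}z_j,z_i\rangle_A$ and bound each term by $\|S_{ij}^{*_A}S_{ij}\|_A=\|S_{ij}\|_A^2$. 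If that identification fails in general, the fallback is a modified $\widetilde{s_{ij}}$ coming from the true block entries $\sum_k S_{ki}^{*_A}S_{kj}$.
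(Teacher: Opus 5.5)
\textbf{There is a genuine gap in Step 2, and it cannot be closed.} The inequality you aim for there is false, and so is (34) itself.

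Your identification $\langle S^{*_{\mathbb{A}}}Sz,z\rangle_{\mathbb{A}}=\sum_{i,j}\langle S_{ij}^{*_A}S_{ij}z_j,z_i\rangle_A$ treats $S^{*_{\mathbb{A}}}S$ as the blockwise product $[S_{ij}^{*_A}S_{ij}]$. But the $(i,k)$ block of $S^{*_{\mathbb{A}}}$ is $S_{ki}^{*_A}$, so the $(i,j)$ block of $S^{*_{\mathbb{A}}}S$ is $\sum_k S_{ki}^{*_A}S_{kj}$. The row products $\|S_{ki}\|_A\|S_{kj}\|_A$ you found by expanding honestly are genuinely there. The paper obtains (36) by exactly this same identification, so following its route imports the error rather than fixing it.

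For a concrete check, take $\mathcal{H}=\mathbb{C}$, $A=1$ and $S_{ij}=1$ for $i,j=1,2$, i.e.\ $S=\begin{bmatrix}1&1\\1&1\end{bmatrix}$, and take $z=\frac{1}{\sqrt{2}}(1,1)^t$.
\begin{itemize}
\item $\|Sz\|_{\mathbb{A}}^2=4$, while your Step 2 bound gives $\frac12+\frac12+2\cdot\frac12=2$.
\item $\breve{\mathbb{S}}=\begin{bmatrix}2&4\\0&2\end{bmatrix}$, so $\omega(\breve{\mathbb{S}})=2+\frac{4}{2}=4$.
\item $\langle Sz,z\rangle_{\mathbb{A}}=2$, so $d\omega_{\mathbb{A}}(S)\ge\sqrt{4+16}=2\sqrt{5}>4$.
\end{itemize}
Hence (34) as stated has no proof.

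Your Step 1 and the combination in Step 3 are sound. They coincide with the paper's (35) and its use of $\sqrt{c+d}\le\sqrt{c}+\sqrt{d}$. The fallback you mention is the correct repair, but it proves a different bound.

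Let $M=[\|S_{ij}\|_A]$. Let $T$ be the upper triangular matrix with diagonal entries $\omega_A(S_{ii})$ and $(i,j)$ entry $\|S_{ij}\|_A+\|S_{ji}\|_A$ for $i<j$. Then
\[
\|Sz\|_{\mathbb{A}}^2=\sum_i\Big\|\sum_j S_{ij}z_j\Big\|_A^2\le\|M\tilde z_A\|^2=\langle M^tM\tilde z_A,\tilde z_A\rangle,
\]
and therefore
\[
d\omega_{\mathbb{A}}(S)\le\omega\left(T+M^tM\right).
\]
Here $M^tM$ has entries $\sum_k\|S_{ki}\|_A\|S_{kj}\|_A$. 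In particular its diagonal is $\sum_k\|S_{ki}\|_A^2$ rather than $\|S_{ii}\|_A^2$. In the example above this bound gives $6\ge 2\sqrt{5}$, as it must. The entries $\widetilde{s_{ij}}$ claimed in the statement cannot be recovered without the false blockwise identification.
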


\begin{proof}
We notice that, \begin{equation}\begin{aligned}
d\omega_{\mathbb{A}}(S)&=\sup\left\{\sqrt{|\langle Sz,z \rangle_{\mathbb{A}}|^2+\|Sz\|_{\mathbb{A}}^4}: z\in{\mathbb{H}}, \|z\|_{\mathbb{A}}=1\right\}\\&=\sup\left\{\sqrt{|\langle Sz,z \rangle_{\mathbb{A}}|^2+|\langle S^{*_\mathbb{A}}Sz,z \rangle_{\mathbb{A}}|^2}: z\in{\mathbb{H}}, \|z\|_{\mathbb{A}}=1\right\}\\&\le\sup\Big\{|\langle Sz,z \rangle_{\mathbb{A}}|: z\in{\mathbb{H}}, \|z\|_{\mathbb{A}}=1\Big\}+\sup\Big\{|\langle S^{*_\mathbb{A}}Sz,z \rangle_{\mathbb{A}}|: z\in{\mathbb{H}}, \|z\|_{\mathbb{A}}=1\Big\},\end{aligned}\notag\end{equation} where  the last inequality follows from the fact that $\sqrt{c+d}\le\sqrt{c}+\sqrt{d}$ holds for $c,d\ge0.$

Let $z =[z_1, z_2,  \cdots  ,z_n
]^{t}\in{\mathbb{H}}$ 
where $\|z\|_{\mathbb{A}}=1$
and
$\tilde{z}_{A}=\begin{bmatrix}
\|z_1\|_{A}, & \|z_2\|_{A}, & \cdots & ,\|z_n\|_{A}
\end{bmatrix}^{t}.$ 

Then it is clear that $\tilde{z}_{A}\in{\mathbb{C}^n}$ is a unit vector.
  We have, \begin{equation}\begin{aligned}|\langle Sz,z \rangle_{\mathbb{A}} |&= \left|\sum\limits_{i,j=1}^{n} \langle S_{ij}z_j,z_i \rangle_{A}\right|\\&\le \sum\limits_{i,j=1}^{n} \left|\langle S_{ij}z_j,z_i \rangle_{A}\right|\\&=\sum\limits_{i=1}^{n} \left|\langle S_{ii}z_i,z_i \rangle_{A}\right|+\sum\limits_{\substack{i,j=1\\i \ne j}}^{n} \left|\langle S_{ij}z_j,z_i \rangle_{A}\right|\\&=\sum\limits_{i=1}^{n} \left|\langle S_{ii}z_i,z_i \rangle_{A}\right|+\sum\limits_{\substack{i,j=1\\i < j}}^{n}\big(\hspace{0.05cm}\left|\langle S_{ij}z_j,z_i \rangle_{A}\right|+\left|\langle S_{ji}z_i,z_j \rangle_{A}\right|\hspace{0.05cm}\big)
  \end{aligned}\notag\end{equation}
  
 So,\begin{equation}\label{(35)}\begin{aligned}|\langle Sz,z \rangle_{\mathbb{A}} | &\le\sum\limits_{i=1}^{n} \omega_{A}( S_{ii})\|z_i\|^2_{A}+\sum\limits_{\substack{i,j=1\\i < j}}^{n}\big(\hspace{0.05cm}\| S_{ij} \|_{A}+\| S_{ji}\|_{A}\hspace{0.05cm}\big)\|z_i\|_{A}\|z_j\|_{A}.
\end{aligned}\tag{35}\end{equation}  

By the similar approach, we derive
\begin{equation}\begin{aligned}|\langle S^{*_A}Sz,z \rangle_{\mathbb{A}} |&= \left|\sum\limits_{i,j=1}^{n} \langle S^{*_A}_{ij}S_{ij}z_j,z _i\rangle_{A}\right|\\&\le \sum\limits_{i=1}^{n}\omega_{A}(S^{*_A}_{ii}S_{ii})\|z_{i}\|_{A}^2+\sum\limits_{\substack{i,j=1\\i < j}}^{n}\big(\hspace{0.05cm}\|S_{ij}^{*_A} S_{ij} \|_{A}+\|S_{ji}^{*_A} S_{ji}\|_{A}\hspace{0.05cm}\big)\|z_i\|_{A}\|z_j\|_{A}.
\end{aligned}\notag\end{equation} 
So,
\begin{equation}\label{(36)}\begin{aligned}|\langle S^{*_A}Sz,z \rangle_{\mathbb{A}} |\le \sum\limits_{i=1}^{n}\|S_{ii}\|_{A}^2\|z_{i}\|_{A}^2+\sum\limits_{\substack{i,j=1\\i < j}}^{n}\big(\hspace{0.05cm}\|S_{ij} \|^2_{A}+\|S_{ji}\|^2_{A}\hspace{0.05cm}\big)\|z_i\|_{A}\|z_j\|_{A}.
\end{aligned}\tag{36}\end{equation}

Adding (\ref{(35)}), (\ref{(36)}) and then considering supremum over $z\in{\mathbb{H}}$ with $\|z\|_{\mathbb{A}}=1,$ we get
\begin{equation}\begin{aligned}
d\omega_{\mathbb{A}}(S)&\le\sum\limits_{i,j=1}^n\widetilde{s_{ij}}\|z_i\|_{A}\hspace{0.06cm}\|z_j\|_{A}\\&=\langle\mathbb{\breve{S}} \tilde{z}_{A},\tilde{z}_{A}\rangle\\&=\omega(\mathbb{\breve{S}}),   
\end{aligned}\notag\end{equation} which completes the proof.
\end{proof}
As an immediate application of Theorem \ref{Theorem 3.7}, we see the following inequality. 
\begin{corollary}\label{Corollary 3.8}
Let $S_{ij}\in{\mathbb{B}_{A}(\mathcal{H})}$ for all $i,j=1,2.$
\begin{equation}\label{(37)}
d\omega_{\mathbb{A}}\left(\begin{bmatrix}
S_{11} & S_{12}  \\
S_{21} & S_{22} 
\end{bmatrix}\right)\le \frac{1}{2}\left(b_1+b_3+\sqrt{(b_1-b_3)^2+b_2^{2}}\right), 
\tag{37}\end{equation}
where 
\begin{equation*}\begin{aligned}
&b_1=\omega_{A}\left(S_{11}\right)+\|S_{11}\|_{A}^2, \hspace{1cm} b_3=\omega_{A}\left(S_{22}\right)+\|S_{22}\|_{A}^2,\\&b_2=\|S_{12} \|_{A}+\|S_{21}\|_{A}+\|S_{12} \|^2_{A}+\|S_{21}\|^2_{A}.\end{aligned}\end{equation*}
\end{corollary}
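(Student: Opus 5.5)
This is the $n=2$ case of Theorem \ref{Theorem 3.7}, followed by Lemma \ref{Lemma 3.3} to evaluate the numerical radius of the resulting $2\times 2$ matrix, in the same way Corollary \ref{Corollary 3.5} was deduced from Theorem \ref{Theorem 3.4}.

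\emph{Step 1: specialize Theorem \ref{Theorem 3.7}.} Taking $n=2$ in Theorem \ref{Theorem 3.7}, the matrix $\mathbb{\breve{S}}$ becomes
\begin{equation*}
\mathbb{\breve{S}}=\begin{bmatrix} b_1 & b_2\\ 0 & b_3\end{bmatrix},
\end{equation*}
with $b_1,b_2,b_3$ exactly as in the statement. Unlike Theorem \ref{Theorem 3.4}, Theorem \ref{Theorem 3.7} carries no factor $n$ and no square root. Hence
\begin{equation*}
d\omega_{\mathbb{A}}\left(\begin{bmatrix} S_{11} & S_{12}\\ S_{21} & S_{22}\end{bmatrix}\right)\le\omega\left(\begin{bmatrix} b_1 & b_2\\ 0 & b_3\end{bmatrix}\right).
\end{equation*}

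\emph{Step 2: apply Lemma \ref{Lemma 3.3}.} All entries are nonnegative, since $b_1,b_3\ge0$ (sums of numerical radii and seminorms) and $b_2\ge0$. So Lemma \ref{Lemma 3.3} gives
\begin{equation*}
\omega\left(\begin{bmatrix} b_1 & b_2\\ 0 & b_3\end{bmatrix}\right)=r\left(\begin{bmatrix} b_1 & \frac{b_2}{2}\\ \frac{b_2}{2} & b_3\end{bmatrix}\right).
\end{equation*}
Here I read the lemma's $\frac12 r(s_{ij}+s_{ji})$ as the spectral radius of the symmetrized matrix, whose diagonal is $2b_1,2b_3$ and off-diagonal entries are $b_2$.

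\emph{Step 3: compute the spectral radius.} The symmetric matrix above has largest eigenvalue
\begin{equation*}
\frac{b_1+b_3+\sqrt{(b_1-b_3)^2+b_2^2}}{2},
\end{equation*}
which is inequality (\ref{(37)}).

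The only real care needed is Step 2: getting the factor $\tfrac12$ right in Lemma \ref{Lemma 3.3} and confirming the entries are nonnegative so the lemma applies. Everything else is direct substitution.
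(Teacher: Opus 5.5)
Your argument is exactly the paper's proof of this corollary: set $n=2$ in Theorem~\ref{Theorem 3.7}, evaluate the numerical radius of the upper triangular $2\times 2$ matrix with Lemma~\ref{Lemma 3.3}, and take the top eigenvalue. Steps 2 and 3 are done correctly. The problem is Step 1, because Theorem~\ref{Theorem 3.7} is false, and so is (\ref{(37)}).

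Take $\mathcal{H}=\mathbb{C}$, $A=1$ and $S_{ij}=1$ for all $i,j$. The operator matrix is then $J=\begin{bmatrix}1&1\\1&1\end{bmatrix}$ and $\mathbb{A}=I$. This gives $b_1=b_3=2$ and $b_2=4$, so the right-hand side of (\ref{(37)}) equals $\frac{1}{2}(4+4)=4$. But $z=\frac{1}{\sqrt{2}}(1,1)^{t}$ is a unit vector with $Jz=2z$, so $d\omega(J)\ge\sqrt{2^2+2^4}=2\sqrt{5}>4$. Scaling makes it worse: with $S_{ij}=t$ the right-hand side is $2t+2t^2$, while $d\omega(tJ)\ge\|tJ\|^2=4t^2$.

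The error is in the paper's proof of Theorem~\ref{Theorem 3.7}, in the display leading to (\ref{(36)}). The $(i,j)$ block of $S^{*_{\mathbb{A}}}S$ is $\sum_{k}S_{ki}^{*_A}S_{kj}$, not $S_{ij}^{*_A}S_{ij}$. Equivalently, $\langle S^{*_{\mathbb{A}}}Sz,z\rangle_{\mathbb{A}}=\|Sz\|_{\mathbb{A}}^2=\sum_i\|\sum_j S_{ij}z_j\|_A^2$ contains cross terms $2\,\mathrm{Re}\langle S_{ij}z_j,S_{ik}z_k\rangle_A$ ($j\neq k$), and (\ref{(36)}) simply drops them. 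For $J$ and the $z$ above, the left side of (\ref{(36)}) is $4$ and its right side is $2$. The same identification is made in (\ref{(31)}), and Corollary~\ref{Corollary 3.5} also fails on $J$.

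The estimate (\ref{(35)}) for $|\langle Sz,z\rangle_{\mathbb{A}}|$ is fine. What can be salvaged is the pointwise bound $\|Sz\|_{\mathbb{A}}^2\le\|M\tilde{z}_A\|^2$ with $M=[\|S_{ij}\|_A]$. Combine it with (\ref{(35)}) at the same $z$ and with $\sqrt{a^2+b^2}\le a+b$. This gives a correct inequality of the same shape, with these changes:
the diagonal entries become $\omega_A(S_{11})+\|S_{11}\|_A^2+\|S_{21}\|_A^2$ and $\omega_A(S_{22})+\|S_{12}\|_A^2+\|S_{22}\|_A^2$;
$b_2$ is replaced by $\|S_{12}\|_A+\|S_{21}\|_A+2(\|S_{11}\|_A\|S_{12}\|_A+\|S_{21}\|_A\|S_{22}\|_A)$.

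Before relying on a quoted theorem, checking it against the trivial lower bound $d\omega_{\mathbb{A}}(S)\ge\|S\|_{\mathbb{A}}^2$ would have caught this.
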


\begin{proof}
Letting  $b_1,b_2,b_3$ as described above and $n=2$ in   Theorem \ref{Theorem 3.7}, we get that
\begin{equation}\begin{aligned}d\omega_{\mathbb{A}}\left(\begin{bmatrix}
S_{11} & S_{12}  \\
S_{21} & S_{22} 
\end{bmatrix}\right)&\le\omega\left(\begin{bmatrix}
b_1 &  b_2 \\
 0& b_3 
\end{bmatrix}\right)\\&=r\left(\begin{bmatrix}
b_1 & \frac{b_2}{2} \\
 \frac{b_2}{2} & b_3 
\end{bmatrix}\right)\hspace{1cm}(\text{Lemma \ref{Lemma 3.3}})\\&=\frac{1}{2}\left(b_1+b_3+\sqrt{(b_1-b_3)^2+b_2^{2}}\right),\end{aligned}\notag\end{equation} which provides the desired inequality (\ref{(37)}).
\end{proof}

\begin{remark}
Considering $A=I$ in Corollary  \ref{Corollary 3.8}, the inequality (\ref{(37)}) becomes 
\begin{equation}
d\omega\left(\begin{bmatrix}
S_{11} & S_{12}  \\
S_{21} & S_{22} 
\end{bmatrix}\right)\le \frac{1}{2}\left(b_1+b_3+\sqrt{(b_1-b_3)^2+b_2^{2}}\right), 
\notag\end{equation}
where 
\begin{equation*}\begin{aligned}
&b_1=\omega\left(S_{11}\right)+\|S_{11}\|^2, \hspace{1cm} b_3=\omega\left(S_{22}\right)+\|S_{22}\|^2,\\&b_2=\|S_{12} \|+\|S_{21}\|+\|S_{12} \|^2+\|S_{21}\|^2.\end{aligned}\end{equation*}
 Thus, the inequality (\ref{(37)}) generalizes  the inequality which is given in [\citenum{alomari2023davis}, Corollary 3.1].
\end{remark}

Active research is underway to extend standard numerical radius bounds to the $A-$numerical radius.
We can draw more attention about the generalized $A-$numerical radius when we use this theory with positive operators and functions that are continuous.

The following finding  is an obvious outcome of the ongoing improvements in $A-$weighted numerical radius theory and block operator approaches.

\begin{theorem} \label{Theorem 3.10}

Let $X_{j},Y_j,C_{jk},D_{jk}$  for $j,k=1,2,\cdots,n$  be operators in $\mathbb{B}_{A}(\mathcal{H})$ such that $X_j, Y_j$ are $A-$positive  operators, and let $f$ and $g$ be two non-negative continuous functions on $[0,\infty).$ Then 
\begin{equation}\label{(38)}
\mbox{ \footnotesize$\omega_{A}\left(\sum\limits_{j,k=1}^n \Big(f(X_j)C_{jk}g(Y_k)+g(Y_j)D_{jk}f(X_k)\Big)\right)  \le\frac{1}{2}\left(\|\mathbb{P}\|_{\mathbb{A}}+\|\mathbb{Q}\|_{\mathbb{A}}\right) \left\|\sum\limits_{j=1}^n f^2(X_j)+g^2(Y_j)\right\|_{A},$}
\tag{38}\end{equation}
where 

$\mathbb{P}=\begin{bmatrix}
C_{11} & C_{12} & \cdots & C_{1n} \\
C_{21} & C_{22} & \cdots & C_{2n} \\
\vdots & \vdots & \ddots & \vdots \\
C_{n1} & C_{n2} & \cdots & C_{nn}
\end{bmatrix}$  and $\mathbb{Q}=\begin{bmatrix}
D_{11} & D_{12} & \cdots & D_{1n} \\
D_{21} & D_{22} & \cdots & D_{2n} \\
\vdots & \vdots & \ddots & \vdots \\
D_{n1} & D_{n2} & \cdots & D_{nn}
\end{bmatrix}\in{\mathbb{B}_{\mathbb{A}}(\mathbb{H}}).$ 

\end{theorem}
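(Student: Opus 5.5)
The plan is to write the sum as a compression of a block operator. Define the row operator $\mathbb{F}$ from $\mathbb{H}$ to $\mathcal{H}$ by $\mathbb{F}(u_1,\dots,u_n)^t=\sum_j f(X_j)u_j$, and define $\mathbb{G}$ analogously with the $g(Y_j)$. Let
\[
\mathbb{T}=\begin{bmatrix}\mathbb{F}&\mathbb{G}\end{bmatrix},\qquad
\mathbb{M}=\begin{bmatrix}0&\mathbb{P}\\ \mathbb{Q}&0\end{bmatrix}.
\]
Here $\mathbb{T}$ maps $\mathbb{H}\oplus\mathbb{H}\to\mathcal{H}$, and $\mathbb{M}$ acts on $\mathbb{H}\oplus\mathbb{H}$, equipped with the weight $\mathbb{A}\oplus\mathbb{A}$. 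Since $X_j,Y_j$ are $A$-positive, $f(X_j)$ and $g(Y_j)$ are $A$-selfadjoint and $A$-positive (via the continuous functional calculus on the $A$-selfadjoint part). Their $A$-adjoints act as themselves inside the $A$-inner products, so $\mathbb{T}^{*_A}$ is the column with entries $f(X_j)$, $g(Y_j)$. A direct block computation then gives
\[
\mathbb{T}\,\mathbb{M}\,\mathbb{T}^{*_A}=\sum_{j,k}\Big(f(X_j)C_{jk}g(Y_k)+g(Y_j)D_{jk}f(X_k)\Big)=:T.
\]

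For $z\in\mathcal{H}$ with $\|z\|_A=1$ I will write $\langle Tz,z\rangle_A=\langle \mathbb{M}w,w\rangle_{\mathbb{A}\oplus\mathbb{A}}$ with $w=\mathbb{T}^{*_A}z$. I will avoid passing to the numerical radius of $\mathbb{M}$ and instead split the inner product directly. With $u=(f(X_j)z)_j$ and $v=(g(Y_j)z)_j$, the quantity is $\langle \mathbb{P}v,u\rangle_{\mathbb{A}}+\langle \mathbb{Q}u,v\rangle_{\mathbb{A}}$. Hence
\[
|\langle Tz,z\rangle_A|\le(\|\mathbb{P}\|_{\mathbb{A}}+\|\mathbb{Q}\|_{\mathbb{A}})\,\|u\|_{\mathbb{A}}\|v\|_{\mathbb{A}}
\le\tfrac12(\|\mathbb{P}\|_{\mathbb{A}}+\|\mathbb{Q}\|_{\mathbb{A}})\big(\|u\|_{\mathbb{A}}^2+\|v\|_{\mathbb{A}}^2\big),
\]
where the second step is AM--GM.

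It remains to identify the last factor. We have $\|u\|^2_{\mathbb{A}}+\|v\|^2_{\mathbb{A}}=\sum_j\big(\langle f(X_j)z,f(X_j)z\rangle_A+\langle g(Y_j)z,g(Y_j)z\rangle_A\big)$. By $A$-selfadjointness this equals $\big\langle \sum_j (f^2(X_j)+g^2(Y_j))z,z\big\rangle_A$. That operator is $A$-positive, so this is at most $\big\|\sum_j f^2(X_j)+g^2(Y_j)\big\|_A$, using the fact noted in the introduction that the norm of an $A$-positive operator is the supremum of its quadratic form. Taking the supremum over $z$ yields (38).

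The main obstacle is justifying that $f(X_j)$ is well defined and $A$-selfadjoint, so that $\langle f(X_j)a,b\rangle_A=\langle a,f(X_j)b\rangle_A$ and $\langle f(X_j)^2 z,z\rangle_A=\|f(X_j)z\|_A^2$. An $A$-positive operator need not be selfadjoint in $\mathcal{H}$, so I would first try to work on $\overline{\mathcal{R}(A)}$. For polynomials $p$, $AX=X^*A$ gives $Ap(X)=p(X)^*A$. Passing to the limit under Weierstrass approximation on an interval containing the $A$-spectrum, using $\|\cdot\|_A$-convergence, then gives the identity for continuous $f$. If this cannot be made rigorous in general, I would adopt the standard convention from the literature, in which $f(X_j)$ is defined through $A^{1/2}$-lifting, and cite it.
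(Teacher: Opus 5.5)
Your proposal is correct and is essentially the paper's argument. The paper packages $f(X_j)$, $g(Y_j)$ as the first columns of $n\times n$ block operators $\mathbb{M},\mathbb{N}$ rather than using your row operator $\mathbb{T}$ and $2\times 2$ block, but the core estimate is the same: Cauchy--Schwarz against $\|\mathbb{P}\|_{\mathbb{A}}$ and $\|\mathbb{Q}\|_{\mathbb{A}}$ on the vectors $(f(X_j)z)_j$ and $(g(Y_j)z)_j$, then AM--GM, then rewriting $\|u\|_{\mathbb{A}}^2+\|v\|_{\mathbb{A}}^2$ as the quadratic form of $\sum_j f^2(X_j)+g^2(Y_j)$. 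The paper also tacitly identifies $f(X_j)^{*_A}$ with $f(X_j)$, so your direct splitting of $\langle Tz,z\rangle_A$ is if anything the more careful version, since it needs only $\langle f(X_j)a,b\rangle_A=\langle a,f(X_j)b\rangle_A$ and not the literal identity $\mathbb{T}\mathbb{M}\mathbb{T}^{*_A}=T$.
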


\begin{proof}
Let $\mathbb{M}=\begin{bmatrix}
f(X_1)  & 0 & \cdots & 0 \\
f(X_2)  & 0 & \cdots & 0 \\
\vdots   & \vdots & \ddots & \vdots \\
f(X_n)  & 0 & \cdots & 0 
\end{bmatrix},$
$\mathbb{N}=\begin{bmatrix}
g(Y_1)  & 0 & \cdots & 0 \\
g(Y_2) & 0 & \cdots & 0 \\
\vdots  & \vdots & \ddots & \vdots \\
g(Y_n)  & 0 & \cdots & 0 
\end{bmatrix},$ 

\begin{align*}\mathbb{P}=\begin{bmatrix}
C_{11} & C_{12} & \cdots & C_{1n} \\
C_{21} & C_{22} & \cdots & C_{2n} \\
\vdots & \vdots & \ddots & \vdots \\
C_{n1} & C_{n2} & \cdots & C_{nn}
\end{bmatrix}  
\text{and}\hspace{0.1cm}
\mathbb{Q}=\begin{bmatrix}
D_{11} & D_{12} & \cdots & D_{1n} \\
D_{21} & D_{22} & \cdots & D_{2n} \\
\vdots & \vdots & \ddots & \vdots \\
D_{n1} & D_{n2} & \cdots & D_{nn}
\end{bmatrix}\in{\mathbb{B}_{\mathbb{A}}(\mathbb{H}}).\end{align*}

Now, it is not difficult to verify that

$$\mathbb{M}^{*_{\mathbb{A}}}\mathbb{P}\mathbb{N}+\mathbb{N}^{*_{\mathbb{A}}}\mathbb{Q}\mathbb{M}=\begin{bmatrix}
\sum\limits_{j,k=1}^n f(X_j)C_{jk}g(Y_k)+g(Y_j)D_{jk}f(X_k)  & 0 & \cdots & 0 \\
0  & 0 & \cdots & 0 \\
\vdots  & \vdots & \ddots & \vdots \\
0  & 0 & \cdots & 0 
\end{bmatrix}.$$

Let $z\in{\mathbb{H}}$ where $\|z\|_{\mathbb{A}}=1.$

Utilizing Cauchy-Schwarz inequality and  AM-GM inequality, we see that,\begin{equation}\begin{aligned}&\left|\left\langle \left(\sum\limits_{j,k=1}^n f(X_j)C_{jk}g(Y_k)+g(Y_j)D_{jk}f(X_k)\right)z,z\right\rangle_{A}\right|\\&=\left|\left\langle \left(\mathbb{M}^{*_{\mathbb{A}}}\mathbb{P}\mathbb{N}+\mathbb{N}^{*_{\mathbb{A}}}\mathbb{Q}\mathbb{M}\right)z,z\right\rangle_{\mathbb{A}}\right|\\&\le\left|\left\langle \mathbb{M}^{*_{\mathbb{A}}}\mathbb{P}\mathbb{N}z,z\right\rangle_{\mathbb{A}}\right|+\left|\left\langle \mathbb{N}^{*_{\mathbb{A}}}\mathbb{Q}\mathbb{M}z,z\right\rangle_{\mathbb{A}}\right|\\&=\left|\left\langle \mathbb{P}\mathbb{N}z,\mathbb{M}z\right\rangle_{\mathbb{A}}\right|+\left|\left\langle \mathbb{Q}\mathbb{M}z,\mathbb{N}z\right\rangle_{\mathbb{A}}\right|\\&\le\|\mathbb{PN}z\|_{\mathbb{A}}\|\mathbb{M}z\|_{\mathbb{A}}+\|\mathbb{QM}z\|_{\mathbb{A}}\|\mathbb{N}z\|_{\mathbb{A}}\\&\le\frac{1}{2}\left(\|\mathbb{P}\|_{\mathbb{A}}+\|\mathbb{Q}\|_{\mathbb{A}}\right)\hspace{0.04cm}\left\langle\mathbb{(N^{*_A}N+M^{*_A}M)}z,z\right\rangle_{\mathbb{A}}
\end{aligned}\notag\end{equation}
So,\begin{equation}\label{(39)}\mbox{\footnotesize$ \left|\left\langle \left(\sum\limits_{j,k=1}^n f(X_j)C_{jk}g(Y_k)+g(Y_j)D_{jk}f(X_k)\right)z,z\right\rangle_{A}\right|\le\frac{1}{2}\left(\|\mathbb{P}\|_{\mathbb{A}}+\|\mathbb{Q}\|_{\mathbb{A}}\right)\hspace{0.04cm}\left\|\mathbb{N^{*_A}N+M^{*_A}M}\right\|_{\mathbb{A}}$}\tag{39}\end{equation}
Now, $\left\|\mathbb{N^{*_A}N+M^{*_A}M}\right\|_{\mathbb{A}}=\left\|\begin{bmatrix}
\sum\limits_{j=1}^n f^2(X_j)+g^2(Y_j) & 0        & 0      & \cdots & 0 \\
0      & 0        & 0      & \cdots & 0 \\
\vdots & \vdots   & \vdots & \ddots & \vdots \\
0      & 0        & 0      & \cdots & 0
\end{bmatrix}\right\|_{\mathbb{A}}.$

Using the fact $\left\|\begin{bmatrix}
Z_1 & 0   & \cdots & 0 \\
0   & Z_2 & \cdots & 0 \\
\vdots & \vdots & \ddots & \vdots \\
0   & 0   & \cdots & Z_n
\end{bmatrix}\right\|_{
\mathbb{A}
}=\max\limits_{1\le j\le n}\{\|Z_j\|_{A}\}$ for every $Z_j\in{\mathbb{B}_{A}(\mathcal{H}}),$ $j=1,2,\cdots,n,$ and taking supremum over $z\in{\mathbb{H}}$ with $\|z\|_{\mathbb{A}}=1$ on the inequality (\ref{(39)}), we obtain
the desired inequality (\ref{(38)}).
\end{proof}

If we loose the condition of $A-$ positiveness of $X_j,Y_j$ for all $j=1,2,\cdots,n$, and we choose $\mathbb{M}=\begin{bmatrix}
X_1  & 0 & \cdots & 0 \\
X_2  & 0 & \cdots & 0 \\
\vdots   & \vdots & \ddots & \vdots \\
X_n  & 0 & \cdots & 0 \\
\end{bmatrix},$   $\mathbb{N}=\begin{bmatrix}
Y_1  & 0 & \cdots & 0 \\
Y_2 & 0 & \cdots & 0 \\
\vdots  & \vdots & \ddots & \vdots \\
Y_n  & 0 & \cdots & 0 \\
\end{bmatrix}$ for the particular choice of two functions $f,g$ in the proof of Theorem \ref{Theorem 3.10} then Theorem  \ref{Theorem 3.10} is stated as follows:

\begin{theorem} \label{Theorem 3.11}
 Let $X_{j},Y_j,C_{jk},D_{jk}$  for $j,k=1,2,\cdots,n$  be operators in $\mathbb{B}_{A}(\mathcal{H})$.
 Then 
\begin{equation}\label{(40)}
\mbox{ \footnotesize$\omega_{A}\left(\sum\limits_{j,k=1}^n \Big(X_j^{*_A}C_{jk}Y_k+Y_j^{*_A}D_{jk}X_k\Big)\right)  \le \frac{1}{2} \left(\|\mathbb{P}\|_{\mathbb{A}}+ \|\mathbb{Q}\|_{\mathbb{A}}\right) \left\|\sum\limits_{j=1}^n  X_j^{*_A}X_j+Y_j^{*_A}Y_j\right\|_{A},$}
\tag{40}\end{equation}
where

$\mathbb{P}=\begin{bmatrix}
C_{11} & C_{12} & \cdots & C_{1n} \\
C_{21} & C_{22} & \cdots & C_{2n} \\
\vdots & \vdots & \ddots & \vdots \\
C_{n1} & C_{n2} & \cdots & C_{nn}
\end{bmatrix}$ and $\mathbb{Q}=\begin{bmatrix}
D_{11} & D_{12} & \cdots & D_{1n} \\
D_{21} & D_{22} & \cdots & D_{2n} \\
\vdots & \vdots & \ddots & \vdots \\
D_{n1} & D_{n2} & \cdots & D_{nn}
\end{bmatrix}\in{\mathbb{B}_{\mathbb{A}}(\mathbb{H}}).$ 
   
\end{theorem}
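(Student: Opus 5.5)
We follow the proof of Theorem \ref{Theorem 3.10} almost verbatim, with the column operator matrices
$$\mathbb{M}=\begin{bmatrix} X_1&0&\cdots&0\\ \vdots&\vdots&\ddots&\vdots\\ X_n&0&\cdots&0\end{bmatrix},\qquad \mathbb{N}=\begin{bmatrix} Y_1&0&\cdots&0\\ \vdots&\vdots&\ddots&\vdots\\ Y_n&0&\cdots&0\end{bmatrix}$$
on $\mathbb{H}$. These replace the columns built from $f(X_j)$ and $g(Y_j)$.

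\textbf{Step 1: $\mathbb{A}$-adjoints and the block identity.} Since each $X_j,Y_j\in\mathbb{B}_A(\mathcal{H})$, the matrix $\mathbb{M}$ lies in $\mathbb{B}_{\mathbb{A}}(\mathbb{H})$. Its $\mathbb{A}$-adjoint is the row matrix whose first row is $(X_1^{*_A},\dots,X_n^{*_A})$ and whose other entries are $0$. This holds because $\mathbb{A}^\dagger=\mathrm{diag}(A^\dagger,\dots,A^\dagger)$, so $\mathbb{M}^{*_{\mathbb{A}}}=\mathbb{A}^\dagger\mathbb{M}^*\mathbb{A}$ is computed entrywise; alternatively one can check $\langle \mathbb{M}z,w\rangle_{\mathbb{A}}=\langle z,\mathbb{M}^{*_{\mathbb{A}}}w\rangle_{\mathbb{A}}$ directly. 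The same holds for $\mathbb{N}$. Multiplying blocks then gives
$$\mathbb{M}^{*_{\mathbb{A}}}\mathbb{P}\mathbb{N}+\mathbb{N}^{*_{\mathbb{A}}}\mathbb{Q}\mathbb{M}=\mathrm{diag}\Big(\sum_{j,k}\big(X_j^{*_A}C_{jk}Y_k+Y_j^{*_A}D_{jk}X_k\big),0,\dots,0\Big),$$
$$\mathbb{M}^{*_{\mathbb{A}}}\mathbb{M}+\mathbb{N}^{*_{\mathbb{A}}}\mathbb{N}=\mathrm{diag}\Big(\sum_j \big(X_j^{*_A}X_j+Y_j^{*_A}Y_j\big),0,\dots,0\Big).$$

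\textbf{Step 2: the pointwise estimate.} Write $T=\sum_{j,k}\big(X_j^{*_A}C_{jk}Y_k+Y_j^{*_A}D_{jk}X_k\big)$. For $x\in\mathcal{H}$ with $\|x\|_A=1$, set $z=(x,0,\dots,0)$, so that $\|z\|_{\mathbb{A}}=1$ and $\langle Tx,x\rangle_A=\langle(\mathbb{M}^{*_{\mathbb{A}}}\mathbb{P}\mathbb{N}+\mathbb{N}^{*_{\mathbb{A}}}\mathbb{Q}\mathbb{M})z,z\rangle_{\mathbb{A}}$. We then apply, in order, the triangle inequality, the adjoint relation, Cauchy--Schwarz, $\|\mathbb{P}\mathbb{N}z\|_{\mathbb{A}}\le\|\mathbb{P}\|_{\mathbb{A}}\|\mathbb{N}z\|_{\mathbb{A}}$, and AM-GM $ab\le\frac12(a^2+b^2)$. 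This yields
$$|\langle Tx,x\rangle_A|\le \|\mathbb{P}\|_{\mathbb{A}}\|\mathbb{N}z\|_{\mathbb{A}}\|\mathbb{M}z\|_{\mathbb{A}}+\|\mathbb{Q}\|_{\mathbb{A}}\|\mathbb{M}z\|_{\mathbb{A}}\|\mathbb{N}z\|_{\mathbb{A}}\le \tfrac12(\|\mathbb{P}\|_{\mathbb{A}}+\|\mathbb{Q}\|_{\mathbb{A}})\big(\|\mathbb{M}z\|^2_{\mathbb{A}}+\|\mathbb{N}z\|^2_{\mathbb{A}}\big).$$
The last factor equals $\langle(\mathbb{M}^{*_{\mathbb{A}}}\mathbb{M}+\mathbb{N}^{*_{\mathbb{A}}}\mathbb{N})z,z\rangle_{\mathbb{A}}$, which is at most $\big\|\sum_j(X_j^{*_A}X_j+Y_j^{*_A}Y_j)\big\|_A$. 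The bound follows because this operator is $A$-positive, so its $A$-seminorm is the supremum of its quadratic form. Alternatively one can use the diagonal-norm fact quoted in the proof of Theorem \ref{Theorem 3.10}.

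\textbf{Step 3: conclusion.} Taking the supremum over $\|x\|_A=1$ gives \eqref{(40)}.

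\textbf{Main obstacle.} The only delicate point is Step 1: justifying the form of $\mathbb{M}^{*_{\mathbb{A}}}$ without the $A$-positivity assumption. In Theorem \ref{Theorem 3.10} positivity gave $f(X_j)$ as a natural self-adjoint entry, whereas here the $A$-adjoint entries $X_j^{*_A}$ must appear explicitly. We handle this via $\mathbb{A}^\dagger=\mathrm{diag}(A^\dagger)$ and the formula $S^{*_A}=A^\dagger S^*A$. An alternative is to avoid adjoints altogether and compute $\langle X_j^{*_A}C_{jk}Y_kx,x\rangle_A=\langle C_{jk}Y_kx,X_jx\rangle_A$ directly, then sum over $j,k$; this is the same computation as Step 2.
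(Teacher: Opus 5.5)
Your proposal is correct and takes the same route as the paper. The paper obtains Theorem~\ref{Theorem 3.11} by rerunning the proof of Theorem~\ref{Theorem 3.10} with the column matrices $\mathbb{M},\mathbb{N}$ built from the $X_j$ and $Y_j$, using the triangle inequality, the $\mathbb{A}$-adjoint relation, Cauchy--Schwarz, AM--GM, and the seminorm of the block-diagonal operator $\mathbb{M}^{*_{\mathbb{A}}}\mathbb{M}+\mathbb{N}^{*_{\mathbb{A}}}\mathbb{N}$; your explicit test vector $z=(x,0,\dots,0)$ makes precise a step where the paper blurs vectors of $\mathcal{H}$ and $\mathbb{H}$.
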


If we choose $C_{jk}=D_{jk}=O$ for all $j\ne k$, and we take $C_{jj}=D_{jj}=I$ for all $j=1,2,\cdots,n$ in Theorem \ref{Theorem 3.11} we obtain the following bound.

\begin{corollary}\label{Corollary 3.12}
  Let $X_{j},Y_j$  for $j=1,2,\cdots,n$  be operators in $\mathbb{B}_{A}(\mathcal{H})$. \begin{equation}\label{(41)}
\omega_{A}\left(\sum\limits_{j=1}^n \Big(X_j^{*_A}Y_j+Y_j^{*_A}X_j\Big)\right)  \le  \left\|\sum\limits_{j=1}^n  X_j^{*_A}X_j+Y_j^{*_A}Y_j\right\|_{A}.
\tag{41}\end{equation}   
\end{corollary}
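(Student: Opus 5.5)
The plan is to specialize Theorem \ref{Theorem 3.11}, exactly as the sentence before the corollary indicates. We take $C_{jk}=D_{jk}=O$ for $j\neq k$ and $C_{jj}=D_{jj}=I$ for every $j$. Then $\mathbb{P}=\mathbb{Q}=\mathbb{I}$, the identity operator on $\mathbb{H}$, which lies in $\mathbb{B}_{\mathbb{A}}(\mathbb{H})$.

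The left-hand side of (\ref{(40)}) collapses to the diagonal sum:
\[
\sum_{j,k=1}^n \Big(X_j^{*_A}C_{jk}Y_k+Y_j^{*_A}D_{jk}X_k\Big)=\sum_{j=1}^n \Big(X_j^{*_A}Y_j+Y_j^{*_A}X_j\Big).
\]
Hence the left-hand side of (\ref{(40)}) is exactly the left-hand side of (\ref{(41)}).

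It remains to evaluate the constant $\frac12\big(\|\mathbb{P}\|_{\mathbb{A}}+\|\mathbb{Q}\|_{\mathbb{A}}\big)=\|\mathbb{I}\|_{\mathbb{A}}$. By the definition of the operator seminorm, $\|\mathbb{I}\|_{\mathbb{A}}=\sup\{\|z\|_{\mathbb{A}}:\|z\|_{\mathbb{A}}=1\}=1$, provided $A\neq 0$ so that unit vectors exist. If $A=0$, every $A$-quantity vanishes and (\ref{(41)}) holds trivially as $0\le 0$. Substituting into (\ref{(40)}) gives (\ref{(41)}).

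The only point needing care is the legitimacy of this specialization. Two things must hold: $I\in\mathbb{B}_A(\mathcal{H})$, which is true since $AI=IA$, and the diagonal matrix $\mathbb{I}$ belongs to $\mathbb{B}_{\mathbb{A}}(\mathbb{H})$. Both are immediate, so I expect no real obstacle.

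As a sanity check, the conclusion can also be verified directly. One applies the Cauchy–Schwarz inequality $|\langle Y_jz,X_jz\rangle_A|\le \|Y_jz\|_A\|X_jz\|_A$ and then the AM-GM inequality to each summand. Summing over $j$ gives
\[
|\langle\cdot\, z,z\rangle_A|\le \Big\langle \sum_{j=1}^n\big(X_j^{*_A}X_j+Y_j^{*_A}Y_j\big)z,z\Big\rangle_A,
\]
and the right-hand side is at most the stated $A$-norm.
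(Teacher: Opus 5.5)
Your proposal is correct and is exactly the paper's route: the corollary follows from Theorem \ref{Theorem 3.11} with $C_{jj}=D_{jj}=I$ and off-diagonal entries $O$, so that $\mathbb{P}=\mathbb{Q}$ is the identity on $\mathbb{H}$ and $\frac12(\|\mathbb{P}\|_{\mathbb{A}}+\|\mathbb{Q}\|_{\mathbb{A}})=1$. Your direct Cauchy--Schwarz/AM--GM check is also valid; it is the argument inside the proof of Theorem \ref{Theorem 3.10}, applied summand by summand.
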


\begin{corollary}
  Let $X_{j},Y_j$ be operators in $\mathbb{B}_{A}(\mathcal{H})$ such that $X_j^{*_A}Y_j=Y_j^{*_A}X_j$ for $j=1,2,\cdots,n.$ Then
  
  \begin{equation}\label{(42)}
\omega_{A}\left(\sum\limits_{j=1}^n X_j^{*_A}Y_j\right)  \le
\frac{1}{2}\left\|\sum\limits_{j=1}^n  X_j^{*_A}X_j+Y_j^{*_A}Y_j\right\|_{A}.
\tag{42}\end{equation}   
\end{corollary}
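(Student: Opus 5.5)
This is a direct specialization of Corollary \ref{Corollary 3.12}, using the hypothesis to merge the two summands inside the numerical radius.

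First, by the assumption $X_j^{*_A}Y_j=Y_j^{*_A}X_j$ for each $j$, we rewrite
\[
\sum_{j=1}^n\Big(X_j^{*_A}Y_j+Y_j^{*_A}X_j\Big)=2\sum_{j=1}^n X_j^{*_A}Y_j .
\]
Second, we use the absolute homogeneity of the $A$-numerical radius, $\omega_A(\alpha T)=|\alpha|\,\omega_A(T)$. This is immediate from the definition, since $\langle \alpha Tz,z\rangle_A=\alpha\langle Tz,z\rangle_A$. It gives
\[
\omega_A\Big(\sum_{j=1}^n \big(X_j^{*_A}Y_j+Y_j^{*_A}X_j\big)\Big)=2\,\omega_A\Big(\sum_{j=1}^n X_j^{*_A}Y_j\Big).
\]
Third, substituting this into inequality (\ref{(41)}) and dividing by $2$ yields exactly (\ref{(42)}).

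There is no real obstacle. The only point to check is that Corollary \ref{Corollary 3.12} applies as stated: its hypotheses require only $X_j,Y_j\in\mathbb{B}_A(\mathcal{H})$, which are exactly our hypotheses. The commutation-type condition is used solely to combine the two sums into one.

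Alternatively, one could argue directly, bypassing the block matrices. For $\|z\|_A=1$,
\[
\Big|\Big\langle \sum_{j} X_j^{*_A}Y_jz,z\Big\rangle_A\Big|\le \sum_j\|Y_jz\|_A\|X_jz\|_A\le\tfrac12\Big\langle\sum_j\big(X_j^{*_A}X_j+Y_j^{*_A}Y_j\big)z,z\Big\rangle_A,
\]
by Cauchy--Schwarz and AM--GM. Taking the supremum over $z$ then gives the bound. Notably, this direct argument does not even need the symmetry hypothesis.
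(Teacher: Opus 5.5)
Your main argument is exactly how the paper obtains this corollary: it is read off from Corollary~\ref{Corollary 3.12} by using $X_j^{*_A}Y_j=Y_j^{*_A}X_j$ to rewrite the left side of \eqref{(41)} as $2\,\omega_A\bigl(\sum_j X_j^{*_A}Y_j\bigr)$.

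Your alternative direct argument is also correct. It needs only $\langle X_j^{*_A}Y_jz,z\rangle_A=\langle Y_jz,X_jz\rangle_A$, which follows from $AX_j^{*_A}=X_j^*A$, together with $|\langle Tz,z\rangle_A|\le\|T\|_A$ for $\|z\|_A=1$. It is also genuinely stronger: it drops the symmetry hypothesis and avoids the block-matrix machinery of Theorem~\ref{Theorem 3.11} altogether. Combined with the triangle inequality for $\omega_A$, it even yields Corollary~\ref{Corollary 3.12} itself.

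The paper reaches the same hypothesis-free bound only afterwards, in part (i) of the following remark. There it applies Corollary~\ref{Corollary 3.12} with $Y_j$ replaced by $e^{i\theta}Y_j$ and uses $\sup_{\theta}\|\Re_A(e^{i\theta}Z)\|_A=\omega_A(Z)$. Your two-line Cauchy--Schwarz/AM--GM estimate is the more transparent route.
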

\begin{remark}
(i) It is not difficult to show that $X_j^{*_A}Y_j+Y_j^{*_A}X_j$ is $A-$ self-adjoint.
It is well-known that $\sup\limits_{\theta\in{\mathbb{R}}}\|\Re_{A}(e^{i\theta}Z)\|_{A}=\omega_{A}(Z),$ where $Z\in{\mathbb{B}_{A}(\mathcal{H})}.$
From Corollary \ref{Corollary 3.12}, we see that for every $\theta\in{\mathbb{R}},$
\begin{equation}
 \left\|\sum\limits_{j=1}^n \Re_{A}(e^{i\theta}X_j^{*_A}Y_j)\right\|_{A} \le   \frac{1}{2}\left\|\sum\limits_{j=1}^n  X_j^{*_A}X_j+Y_j^{*_A}Y_j\right\|_{A}
\notag\end{equation}
i.e. \begin{equation}\label{(43)}
 \left\|\Re_{A}(e^{i\theta}\sum\limits_{j=1}^n X_j^{*_A}Y_j)\right\|_{A} \le   \frac{1}{2}\left\|\sum\limits_{j=1}^n  X_j^{*_A}X_j+Y_j^{*_A}Y_j\right\|_{A}
\tag{43}\end{equation}
Taking supremum over all $\theta\in{\mathbb{R}}$, we establish that

\begin{equation*}
    \omega_{A}\left(\sum\limits_{j=1}^n X_j^{*_A}Y_j\right)\le\frac{1}{2}\left\| X_j^{*_A}X_j+Y_j^{*_A}Y_j\right\|_{A}.\end{equation*}

By letting $n=1$ and $X,Y\in{\mathbb{B}_{A}(\mathcal{H})}$, we obtain that
\begin{equation*}
\omega_{A}\left(X^{*_A}Y\right)  \le \frac{1}{2} \left\| X^{*_A}X+Y^{*_A}Y\right\|_{A},
\end{equation*}
which is proved as a special case in [\citenum{guesba2021some}, Theorem 2.1].

(ii) Discovering Theorem \ref{Theorem 3.10} yields a significant general form that makes a connection between the operator-matrix norm with the sums of the corresponding functions of $X_j,Y_j$ and the $A-$numerical radius $\omega_{A}$ of the sums of operator products with a matrix structure $f(X_j)C_{jk}g(Y_k)+g(Y_j)D_{jk}f(X_k).$
It is a result that brings everything together and describes how operators work in terms of their functional changes.

(iii) By considering $D_{jk}=O$  for $j,k=1,2,\cdots,n$ in Theorem \ref{Theorem 3.11} we get that 
\begin{equation*}
\mbox{ \footnotesize$\omega_{A}\left(\sum\limits_{j,k=1}^n X_j^{*_A}C_{jk}Y_k\right)  \le \frac{1}{2} \|\mathbb{P}\|_{\mathbb{A}} \left\|\sum\limits_{j=1}^n  X_j^{*_A}X_j+Y_j^{*_A}Y_j\right\|_{A},$}
\end{equation*}
where $\mathbb{P}=\begin{bmatrix}
C_{11} & C_{12} & \cdots & C_{1n} \\
C_{21} & C_{22} & \cdots & C_{2n} \\
\vdots & \vdots & \ddots & \vdots \\
C_{n1} & C_{n2} & \cdots & C_{nn}
\end{bmatrix},$ which generalizes the estimation given in [\citenum{bhunia2021new}, Theorem 2.19]. Consequently, we can conclude that  Theorem \ref{Theorem 3.10} allows more flezibility of the estimation proved in [\citenum{bhunia2021new}, Theorem 2.19] because of the use of two non-negative continuous functions.
\end{remark}

\section{Upper bounds on the 
\texorpdfstring{$A-$s}spectral radius of sums of operator products and related 
\texorpdfstring{$A-$n}numerical radius inequalities}\label{sec4}

We present the following result which shows the upper bound of $A-$spectral radius for the sum of product of operators.

\begin{theorem}\label{Theorem 4.1}
Let $C_1,C_2,D_1,D_2,X_1,X_2$ $\in{\mathbb{B}_{A}(\mathcal{H})}.$ Then we have
\begin{equation}\begin{aligned}\label{(44)}r_{A}(C_1X_1D_1+C_2X_2D_2)\le \min(\alpha_1,\alpha_2),\end{aligned}\tag{44}\end{equation} where
\begin{equation}\begin{aligned}
\mbox{\footnotesize $\alpha_{1}=\frac{1}{2}\bigg(\|X_1D_1C_1\|_{A}+\|D_2C_2X_2\|_{A}+\sqrt{(\|X_1D_1C_1\|_{A}-\|D_2C_2X_2\|_{A})^2+4\|X_1D_1C_2X_2\|_{A}\|D_2C_1\|_{A}}\bigg)$} \notag\end{aligned}\end{equation}and
\begin{equation}\begin{aligned}\mbox{\footnotesize $\alpha_{2}=\frac{1}{2}\bigg(\|D_1C_1X_1\|_{A}+\|X_2D_2C_2\|_{A}+\sqrt{(\|D_1C_1X_1\|_{A}-\|X_2D_2C_2\|_{A})^2+4\|X_2D_2C_1X_1\|_{A}\|D_1C_2\|_{A}}\bigg) $}
.\notag\end{aligned}\end{equation}\end{theorem}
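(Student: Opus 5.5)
We will derive (\ref{(44)}) from Feki's inequality (\ref{(9)}) by regrouping the factors in each product differently, once for $\alpha_1$ and once for $\alpha_2$. Inequality (\ref{(9)}) bounds $r_A(A_1B_1+A_2B_2)$ in terms of the $A$-seminorms of the four products $B_iA_j$. Each summand $C_iX_iD_i$ can be split as $A_iB_i$ in two ways, and each split gives a different bound.

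\textbf{Bound $\alpha_1$.} We split the first term as $A_1=C_1$, $B_1=X_1D_1$, and the second as $A_2=C_2X_2$, $B_2=D_2$. All four factors lie in $\mathbb{B}_A(\mathcal{H})$, since this class is closed under products. The relevant products are:
$B_1A_1=X_1D_1C_1$, $B_2A_2=D_2C_2X_2$, $B_1A_2=X_1D_1C_2X_2$, and $B_2A_1=D_2C_1$.
Substituting these into (\ref{(9)}) gives exactly
\[
r_A(C_1X_1D_1+C_2X_2D_2)\le \alpha_1 .
\]

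\textbf{Bound $\alpha_2$.} We now take $A_1=C_1X_1$, $B_1=D_1$, $A_2=C_2$, $B_2=X_2D_2$. The products become:
$B_1A_1=D_1C_1X_1$, $B_2A_2=X_2D_2C_2$, $B_1A_2=D_1C_2$, and $B_2A_1=X_2D_2C_1X_1$.
Substituting into (\ref{(9)}) gives $r_A(\cdot)\le\alpha_2$.

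Taking the minimum of the two bounds yields (\ref{(44)}).

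The only real check is that the pairing inside the square root matches $\alpha_1$ and $\alpha_2$ as written, namely $\|X_1D_1C_2X_2\|_A\|D_2C_1\|_A$ and $\|X_2D_2C_1X_1\|_A\|D_1C_2\|_A$. This is the cross term $4\|B_1A_2\|_A\|B_2A_1\|_A$ of (\ref{(9)}), which is symmetric in the two cross products, so no ordering issue arises.

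If one preferred not to quote (\ref{(9)}) as a black box, the underlying mechanism would run as follows. We would use the commutativity of the $A$-spectral radius, $r_A(TS)=r_A(ST)$, to pass to $2\times2$ operator matrices. Writing $C_1X_1D_1+C_2X_2D_2$ as a row $[\,C_1\ \ C_2X_2\,]$ times a column $[\,X_1D_1;\ D_2\,]$ gives
\[
r_A(C_1X_1D_1+C_2X_2D_2)=r_{\mathbb{A}}\!\begin{bmatrix} X_1D_1C_1 & X_1D_1C_2X_2\\ D_2C_1 & D_2C_2X_2\end{bmatrix}.
\]
We would then bound the right side by the spectral radius of the $2\times2$ matrix of entrywise $A$-seminorms, and the largest eigenvalue of that nonnegative matrix is precisely $\alpha_1$. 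This step, bounding the $\mathbb{A}$-spectral radius of an operator matrix by that of its norm matrix, is the delicate one. However, it is exactly the content of Feki's result already stated as (\ref{(9)}), so we simply invoke it.
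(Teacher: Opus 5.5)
Your proof is correct and is the paper's own argument. The paper applies Feki's inequality (9), which its proof mislabels as Lemma 2.1, with the same two factorizations: $A_1=C_1$, $B_1=X_1D_1$, $A_2=C_2X_2$, $B_2=D_2$ for $\alpha_1$, and $A_1=C_1X_1$, $B_1=D_1$, $A_2=C_2$, $B_2=X_2D_2$ for $\alpha_2$, and then takes the minimum.
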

\begin{proof}
By putting $A_1=C_1, B_1=X_1D_1,A_2=C_2X_2$ and $B_2=D_2$ in Lemma \ref{Lemma 2.1}, we infer that
\begin{equation}\label{(45)}
 r_{A}(C_1X_1D_1+C_2X_2D_2)\le \alpha_1.   
\tag{45}\end{equation}
Again by setting $A_1=C_1X_1, B_1=D_1,A_2=C_2$ and $B_2=X_2D_2$ in Lemma \ref{Lemma 2.1}, we infer that
\begin{equation}\label{(46)}
 r_{A}(C_1X_1D_1+C_2X_2D_2)\le \alpha_2.   
\tag{46}\end{equation}
Combining (\ref{(45)}) and (\ref{(46)}), we conclude the desired inequality (\ref{(44)}).
\end{proof}

\begin{corollary}\label{Corollary 4.2}
 If $S\in {\mathbb{B}_{A}(\mathcal{H})}$ with closed range then we have
 \begin{equation}\label{(47)}
  \begin{aligned}
r_{A}(S)\le\frac{1}{4}\bigg(\sqrt{\Big(\|S^2S^{\dag}\|_{A}-\|S^{\dag}S^2\|_{A}\Big)^2+4\|S^2\|_{A}\|S^{\dag}S^2S^{\dag}\|_{A}}+\|S^2S^{\dag}\|_{A}+\|S^{\dag}S^2\|_{A}\bigg).   
 \end{aligned}  \tag{47} 
 \end{equation}
\end{corollary}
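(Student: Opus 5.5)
The plan is to apply Theorem \ref{Theorem 4.1} to a splitting of $2S$ into two identical products, using the Moore--Penrose identity $S=SS^{\dag}S$. Since $S$ has closed range, $S^{\dag}\in\mathbb{B}(\mathcal{H})$ and $2S=SS^{\dag}S+SS^{\dag}S$. I would take
\[
C_1=C_2=S,\qquad X_1=X_2=S^{\dag},\qquad D_1=D_2=S,
\]
so that $C_1X_1D_1+C_2X_2D_2=2S$.

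Next I compute the four operators appearing in $\alpha_1$ of Theorem \ref{Theorem 4.1}:
\[
X_1D_1C_1=S^{\dag}S^2,\quad D_2C_2X_2=S^2S^{\dag},\quad X_1D_1C_2X_2=S^{\dag}S^2S^{\dag},\quad D_2C_1=S^2.
\]
Substituting these into (\ref{(44)}) with $\alpha_1$ gives
\[
r_A(2S)\le\tfrac12\Big(\|S^{\dag}S^2\|_A+\|S^2S^{\dag}\|_A+\sqrt{(\|S^{\dag}S^2\|_A-\|S^2S^{\dag}\|_A)^2+4\|S^{\dag}S^2S^{\dag}\|_A\|S^2\|_A}\Big).
\]
The squared difference is symmetric, so this is exactly twice the right-hand side of (\ref{(47)}).

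Finally, I use homogeneity of the $A$-spectral radius. From $\|(2S)^n\|_A=2^n\|S^n\|_A$ and the definition $r_A(T)=\lim_n\|T^n\|_A^{1/n}$, we get $r_A(2S)=2r_A(S)$. Dividing by $2$ then yields (\ref{(47)}).

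The main obstacle is the membership hypothesis of Theorem \ref{Theorem 4.1}: every factor, in particular $X_1=X_2=S^{\dag}$, must lie in $\mathbb{B}_A(\mathcal{H})$. This does not follow automatically from $S\in\mathbb{B}_A(\mathcal{H})$ with closed range. I would handle it either by assuming $S^{\dag}\in\mathbb{B}_A(\mathcal{H})$, as the paper implicitly does throughout Section \ref{sec2} where $(SS^{\dag})^{*_A}$ is used, or by noting that the underlying inequality (\ref{(9)}) only needs the relevant products $S^{\dag}S^2$, $S^2S^{\dag}$, $S^{\dag}S^2S^{\dag}$ and $S^2$ to have finite $A$-seminorm. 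I would state this assumption explicitly in the proof.
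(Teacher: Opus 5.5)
Your proposal is correct and is the paper's proof: the same substitution $C_1=C_2=D_1=D_2=S$, $X_1=X_2=S^{\dag}$ in Theorem~\ref{Theorem 4.1}, with the factor $\frac14$ coming from $r_A(2S)=2r_A(S)$, a step you make explicit and the paper leaves tacit. Your membership caveat is genuine: for $A=\mathrm{diag}(1,0)$ and $S=\begin{bmatrix}0&0\\1&0\end{bmatrix}$ one has $S\in\mathbb{B}_A(\mathcal{H})$ but $S^{\dag}\notin\mathbb{B}_A(\mathcal{H})$. Your second fix, however, misreads (\ref{(9)}), whose hypotheses are on the factors rather than on the seminorms of the products; the correct relaxation is to apply (\ref{(9)}) directly with $A_1=B_2=S$, $B_1=S^{\dag}S$, $A_2=SS^{\dag}$, which requires only $S^{\dag}S,\,SS^{\dag}\in\mathbb{B}_A(\mathcal{H})$.
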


\begin{proof}
If $S^{\dag}$  is the Moore-Penrose of $S$ then we see that $S=SS^{\dag}S.$
Now, by choosing $C_1=C_2=S, X_1=X_2=S^{\dag}$ and $D_1=D_2=S$ in 
 Theorem \ref{Theorem 4.1}, we get    
the desired inequality (\ref{(47)}).
\end{proof}
\begin{remark}
Considering $A=I$  in Corollary \ref{Corollary 4.2}, we observe that
\begin{equation}
\begin{aligned}
&r(S)\le\frac{1}{4}\bigg(\sqrt{\Big(\|S^2S^{\dag}\|-\|S^{\dag}S^2\|\Big)^2+4\hspace{0.05cm}\|S^2\|\hspace{0.1cm}\|S^{\dag}S^2S^{\dag}\|}+\|S^2S^{\dag}\|+\|S^{\dag}S^2\|\bigg)\\&\hspace{0.6cm}\le\frac{1}{4}\bigg(\|S(SS^{\dag})\|+\|(S^{\dag}S)S\| +\left|\|S(SS^{\dag})\|-\|(S^{\dag}S)S\|\right|\bigg)+\frac{1}{2}\sqrt{\|S^2\|\hspace{0.1cm}\|(S^{\dag}S)(SS^{\dag})\|}\\&\hspace{0.6cm}=\frac{1}{2} \max\Big\{\|S(SS^{\dag})\|,\|(S^{\dag}S)S\|\Big\} +\frac{1}{2}\sqrt{\|S^2\|\hspace{0.1cm}\|(S^{\dag}S)(SS^{\dag})\|}\\&\hspace{0.6cm}\le\frac{1}{2} \max\Big\{\|S\|\hspace{0.1cm}\|SS^{\dag}\|,\|S^{\dag}S\|\hspace{0.1cm}\|S\|\Big\} +\frac{1}{2}\sqrt{\|S^2\|} \sqrt{\|S^{\dag}S\|\hspace{0.1cm}\|SS^{\dag}\|}.
\end{aligned}\notag   
\end{equation}
Since $\|S^{\dag}S\|\le1, \|SS^{\dag}\|\le1$ for any $S\in{\mathbb{CR}(\mathcal{H})},$ so we finally reach 
\begin{align*}
r(S)\le \frac{1}{2}\bigg(\|S\|+\sqrt{\|S^2\|}\bigg)    .\end{align*}
Hence, the inequality (\ref{(47)}) generalizes and refines the well-known inequality (\ref{(3)}).
\end{remark}

\begin{theorem}\label{Theorem 4.4}
Let $S\in {\mathbb{B}_{A}(\mathcal{H})},$ and let $S^{*_A}=P+iQ$ be the Cartesian decomposition of $S^{*_A},$ where $P=\frac{T_{1}^{*_{A}}+(T_{1}^{*_{A}})^{*_{A}}}{2}=\Re{T_{1}^{*_{A}}}$ and $Q=\frac{T_{1}^{*_{A}}-(T_{1}^{*_{A}})^{*_{A}}}{2i}=\Im{T_{1}^{*_{A}}}.$ Then we have  
\begin{equation}\label{(48)}
(1)\hspace{0.1cm}\omega^2_{A}(S)\ge\frac{1}{2}\max\Big\{\|P+Q\|^2_{A}+  c^2_{A}(P-Q),\|P-Q\|^2_{A}+  c^2_{A}(P+Q)\Big\},  \tag{48}\end{equation}
\begin{equation}\label{(49)}
(2)\hspace{0.1cm}\omega^2_{A}(S)\ge\max\Big\{\|P+Q\|_{A}\hspace{0.1cm}c_{A}(P-Q),\|P-Q\|_{A}\hspace{0.1cm} c_{A}(P+Q)\Big\},  \tag{49}\end{equation}

\begin{equation}\label{(50)}
\begin{aligned}&(3)\hspace{0.1cm}\omega^2_{A}(S)\ge\frac{1}{4}\|S^{*_A}(S^{*_A})^{*_A}+(S^{*_A})^{*_A}S^{*_A}\|_{A} + \frac{1}{4}\Big(c^2_{A}(P-Q)+  c^2_{A}(P+Q)\Big)\\&\hspace{1.4cm}+\frac{1}{4}\left|\|P+Q\|^2_{A}-\|P-Q\|^2_{A}+c^2_{A}(P-Q)-  c^2_{A}(P+Q)\right|.\end{aligned}\tag{50}\end{equation}
\end{theorem}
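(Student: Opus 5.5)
We set $T=S^{*_A}$ (the $T_1$ in the statement is read as $S$) and use $\omega_A(S)=\omega_A(S^{*_A})$. It then suffices to prove the three lower bounds for $\omega_A^2(T)$, where $T=P+iQ$ with $P=\Re_A T$ and $Q=\Im_A T$. The key facts are that $P$ and $Q$ are $A-$selfadjoint, and so are $P\pm Q$. Hence, for every $z$ with $\|z\|_A=1$, the numbers $p=\langle Pz,z\rangle_A$ and $q=\langle Qz,z\rangle_A$ are real, and $\omega_A(P\pm Q)=\|P\pm Q\|_A$ by the fact recalled in the introduction.

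\textbf{Step 1: the pointwise identity.} For $\|z\|_A=1$,
\[
|\langle Tz,z\rangle_A|^2=p^2+q^2=\tfrac12\big((p+q)^2+(p-q)^2\big),
\]
where $p+q=\langle (P+Q)z,z\rangle_A$ and $p-q=\langle (P-Q)z,z\rangle_A$. By the definition of the $A-$Crawford number, $|p-q|\ge c_A(P-Q)$. Therefore
\[
\omega_A^2(T)\ge \tfrac12\big(|\langle (P+Q)z,z\rangle_A|^2+c_A^2(P-Q)\big).
\]
Taking the supremum over $z$ and using $\omega_A(P+Q)=\|P+Q\|_A$ gives the first term of (\ref{(48)}). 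Exchanging the roles of $P+Q$ and $P-Q$ gives the second term, which proves (\ref{(48)}).

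\textbf{Step 2: inequality (\ref{(49)}).} From the same identity and the AM--GM inequality $\tfrac12(a^2+b^2)\ge |a|\,|b|$,
\[
|\langle Tz,z\rangle_A|^2\ge |\langle (P+Q)z,z\rangle_A|\; c_A(P-Q).
\]
Taking the supremum over $z$ gives $\|P+Q\|_A\,c_A(P-Q)$. The symmetric version gives $\|P-Q\|_A\,c_A(P+Q)$, and (\ref{(49)}) follows.

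\textbf{Step 3: inequality (\ref{(50)}).} Let $x=\|P+Q\|_A^2+c_A^2(P-Q)$ and $y=\|P-Q\|_A^2+c_A^2(P+Q)$. From (\ref{(48)}) and $\max\{x,y\}=\tfrac12(x+y)+\tfrac12|x-y|$,
\[
\omega_A^2(T)\ge \tfrac14(x+y)+\tfrac14|x-y|.
\]
The term $\tfrac14|x-y|$ is exactly the last term of (\ref{(50)}). It then remains to show
\[
\|P+Q\|_A^2+\|P-Q\|_A^2\ge \|T^{*_A}T+TT^{*_A}\|_A,
\]
which, together with the $c_A^2$ terms already in $\tfrac14(x+y)$, gives (\ref{(50)}). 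Here $T^{*_A}T+TT^{*_A}=S^{*_A}(S^{*_A})^{*_A}+(S^{*_A})^{*_A}S^{*_A}$.

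\textbf{Main obstacle.} The delicate step is verifying this last norm inequality in the semi-Hilbertian setting, because $P^{*_A}$ need not equal $P$. I would not manipulate operators directly and would work at the level of $A-$quadratic forms instead. For $\|z\|_A=1$, the operator $T^{*_A}T+TT^{*_A}$ is $A-$positive, and a direct expansion using $AP=P^*A$ and $AQ=Q^*A$ shows that the cross terms cancel:
\[
\langle (T^{*_A}T+TT^{*_A})z,z\rangle_A=\|Tz\|_A^2+\|T^{*_A}z\|_A^2=2\big(\|Pz\|_A^2+\|Qz\|_A^2\big)=\|(P+Q)z\|_A^2+\|(P-Q)z\|_A^2.
\]
The right-hand side is at most $\|P+Q\|_A^2+\|P-Q\|_A^2$. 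Since the $A-$norm of an $A-$positive operator is the supremum of its quadratic form, taking the supremum over $z$ yields the claimed bound and completes the proof of (\ref{(50)}).
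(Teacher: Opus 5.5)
Your proof is correct. For (48), (49), and the reduction of (50) via $\max\{x,y\}=\frac12(x+y)+\frac12|x-y|$, you follow the paper. In Step 1 you take the supremum before replacing $\omega_A(P+Q)$ by $\|P+Q\|_A$, which is the right order. The paper's chain (52) states that replacement as a pointwise inequality, and it only holds after the supremum is taken.

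You differ from the paper in the final estimate $\|P+Q\|_A^2+\|P-Q\|_A^2\ge\|T^{*_A}T+TT^{*_A}\|_A$. The paper stays at the operator level. It uses $\|P\pm Q\|_A^2\ge\|(P\pm Q)^2\|_A$ (submultiplicativity), then the triangle inequality, then the identity $(P+Q)^2+(P-Q)^2=2(P^2+Q^2)=TT^{*_A}+T^{*_A}T$. That identity is purely algebraic: it uses only $P=\frac12(T+T^{*_A})$ and $Q=\frac1{2i}(T-T^{*_A})$. So the difficulty you flag about $P^{*_A}$ never arises on that route. In any case, for $T=S^{*_A}$ one has $(T^{*_A})^{*_A}=T$, so $P^{*_A}=P$ and $Q^{*_A}=Q$.

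Your quadratic-form route uses the $A$-selfadjointness of $P,Q$ and the characterization of $\|\cdot\|_A$ on $A$-positive operators. It needs slightly more verification, but it yields the exact pointwise identity $\langle (T^{*_A}T+TT^{*_A})z,z\rangle_A=\|(P+Q)z\|_A^2+\|(P-Q)z\|_A^2$. That is a bit more informative than the paper's chain of norm inequalities. The cross-term cancellation is quickest to see by combining $T^{*_A}T+TT^{*_A}=2(P^2+Q^2)$ with $\langle P^2z,z\rangle_A=\|Pz\|_A^2$ (from $AP=P^*A$), and similarly for $Q$. This avoids expanding $\|T^{*_A}z\|_A^2$ directly.
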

\begin{proof}
Consider $S^{*_A}=P+iQ$ and $z\in{\mathcal{H}}$ where $\|z\|_{A}=1$. 
Clearly it is observed that $P=\frac{S^{*_A}+(S^{*_A})^{*_A}}{2}$ and $Q=\frac{S^{*_A}-(S^{*_A})^{*_A}}{2i}.$
Now, the simple calculation shows that
\begin{equation}\label{(51)}
\begin{aligned}&|\langle (P+Q)z,z \rangle_{A}|^2+  |\langle (P-Q)z,z \rangle_{A}|^2
\\&=\langle (P+Q)z,z \rangle_{A}\hspace{0.1cm} \langle z,(P+Q)z \rangle_{A}+\langle (P-Q)z,z \rangle_{A}\hspace{0.1cm}\langle z, (P-Q)z\rangle_{A}\\&=2\hspace{0.06cm}\left(|\langle Pz,z \rangle_{A}|^2+|\langle Qz,z \rangle_{A}|^2\right).
\end{aligned} \tag{51} \end{equation} 
We have,
\begin{equation}\label{(52)}
\begin{aligned}
  &|\langle S^{*_A}z,z \rangle_{A}|^2 =|\langle Pz,z \rangle_{A}+i\langle Qz,z \rangle_{A}|^2\\&\hspace{1.9cm}=|\langle Pz,z \rangle_{A}|^2+ |\langle Qz,z \rangle_{A}|^2\\&\hspace{1.9cm}=\frac{1}{2}\Big(|\langle (P+Q)z,z \rangle_{A}|^2+  |\langle (P-Q)z,z \rangle_{A}|^2\Big)\hspace{0.2cm}\text{ (by (\ref{(51)}))}\\&\hspace{1.9cm}\ge\frac{1}{2}\Big(|\langle (P+Q)z,z \rangle_{A}|^2+  c^2_{A}(P-Q)\Big)\\&\hspace{1.9cm}\ge\frac{1}{2}\Big(\|P+Q\|^2_{A}+  c^2_{A}(P-Q)\Big).
\end{aligned} \tag{52} 
\end{equation}
By deciding on the supremum over $z\in{\mathcal{H}}$ where $\|z\|_{A} = 1,$  we infer that
\begin{equation}\label{(53)}
\omega^2_{A}(S)= \omega^2_{A}(S^{*_A}) \ge\frac{1}{2}\Big(\|P+Q\|^2_{A}+  c^2_{A}(P-Q)\Big).
\tag{53}\end{equation}
Again, from  (\ref{(51)}), we get
\begin{equation*}
 |\langle S^{*_A}z,z \rangle_{A}|^2 \ge\frac{1}{2}\Big(|\langle (P-Q)z,z \rangle_{A}|^2+  c^2_{A}(P+Q)\Big)   
\end{equation*}
By deciding on the supremum over $z\in{\mathcal{H}}$ where $\|z\|_{A} = 1,$  we infer that
\begin{equation}\label{(54)}
\omega^2_{A}(S)= \omega^2_{A}(S^{*_A}) \ge\frac{1}{2}\Big(\|P-Q\|^2_{A}+  c^2_{A}(P+Q)\Big).
\tag{54}\end{equation}
Combining (\ref{(53)}) and (\ref{(54)}), we reach the inequality (\ref{(48)}).

Now, applying AM-GM inequality in (\ref{(52)}), we notice that
\begin{equation}
\begin{aligned}&
|\langle S^{*_A}z,z \rangle_{A}|^2\ge|\langle (P+Q)z,z \rangle_{A}|\hspace{0.2cm}  |\langle (P-Q)z,z \rangle_{A}|\\&\hspace{1.9cm}\ge |\langle (P+Q)z,z \rangle_{A}|\hspace{0.1cm} c_{A}(P-Q)
\end{aligned}\notag   
\end{equation}
By deciding on the supremum over $z\in{\mathcal{H}}$ where $\|z\|_{A} = 1,$  we get that
\begin{equation}\label{(55)}
\omega^2_{A}(S)\ge\|P+Q\|_{A}\hspace{0.1cm}c_{A}(P-Q)
\tag{55}\end{equation}
Similarly, we obtain
\begin{equation}\label{(56)}
\omega^2_{A}(S)\ge\|P-Q\|_{A}\hspace{0.1cm}c_{A}(P+Q)
\tag{56}\end{equation}
Combining (\ref{(55)}) and (\ref{(56)}), we reach the inequality (\ref{(49)}).

From (\ref{(48)}), we have
\begin{equation}\begin{aligned}&\omega^2_{A}(S)\ge\frac{1}{2}\max\Big\{\|P+Q\|^2_{A}+  c^2_{A}(P-Q),\|P-Q\|^2_{A}+  c^2_{A}(P+Q)\Big\}\\&\hspace{0.8cm}=\frac{1}{4}\left(\|P+Q\|^2_{A}+ \|P-Q\|^2_{A}+c^2_{A}(P-Q)+  c^2_{A}(P+Q)\right)+\\&\hspace{1.2cm}\frac{1}{4}\left|\|P+Q\|^2_{A}-\|P-Q\|^2_{A}+c^2_{A}(P-Q)-  c^2_{A}(P+Q)\right|\\&\hspace{0.8cm}\ge\frac{1}{4}\left(\|(P+Q)^2\|_{A}+ \|(P-Q)^2\|_{A}+c^2_{A}(P-Q)+  c^2_{A}(P+Q)\right)+\\&\hspace{1.2cm}\frac{1}{4}\left|\|P+Q\|^2_{A}-\|P-Q\|^2_{A}+c^2_{A}(P-Q)-  c^2_{A}(P+Q)\right|\\&\hspace{0.8cm}\ge\frac{1}{2}\hspace{0.07cm}\|P^2+Q^2\|_{A}+\frac{1}{4}\Big(c^2_{A}(P-Q)+  c^2_{A}(P+Q)\Big)+\\&\hspace{1.2cm}\frac{1}{4}\left|\|P+Q\|^2_{A}-\|P-Q\|^2_{A}+c^2_{A}(P-Q)-  c^2_{A}(P+Q)\right|
\end{aligned}\notag\end{equation}
Observe that, 
\begin{equation*}
P^2+Q^2=\frac{1}{2}\Big(S^{*_A}(S^{*_A})^{*_A}+(S^{*_A})^{*_A}S^{*_A}\Big)
\end{equation*}
Utilizing the fact $\|S\|_{A}=\|S^{*_A}\|_{A}$ for $S\in {\mathbb{B}_{A}(\mathcal{H})},$ we reach the inequality (\ref{(50)}).
\end{proof}
Based on Theorem \ref{Theorem 4.4}, we prove the following inequality.
\begin{theorem}\label{Theorem 4.5}
 Let $T_1,T_2,z,Y\in {\mathbb{B}_{A}(\mathcal{H})}.$   Then  
 \begin{equation}\label{(57)}
\begin{aligned}
\omega_{A}(T_1zT_2\pm T_2YT_1)\le 2\sqrt{2} \|T_2\|\max\{\|z\|_{A}, \|Y\|_{A}\} \times \alpha,  
\end{aligned} \tag{57}  
 \end{equation}
 where
\begin{equation}
\begin{aligned}
\alpha=&\Bigg[\omega_{A}^2(T_1)-\frac{1}{4}\Bigg(c_{A}^2(\Re{T_{1}^{*_{A}}}-\Im{T_{1}^{*_{A}}})+c_{A}^2(\Re{T_{1}^{*_{A}}}+\Im{T_{1}^{*_{A}}})  +\Big|\|\Re{T_{1}^{*_{A}}}+\Im{T_{1}^{*_{A}}}\|_{A}^2-\\&\|\Re{T_{1}^{*_{A}}}-\Im{T_{1}^{*_{A}}}\|_{A}^2+c_{A}^2(\Re{T_{1}^{*_{A}}}-\Im{T_{1}^{*_{A}}})-c_{A}^2(\Re{T_{1}^{*_{A}}}+\Im{T_{1}^{*_{A}}})\Big|\Bigg)\Bigg]^{\frac{1}{2}}, 
\end{aligned}\notag
\end{equation}

$\Re{T_{1}^{*_{A}}}=\frac{T_{1}^{*_{A}}+(T_{1}^{*_{A}})^{*_{A}}}{2}$ and $\Im{T_{1}^{*_{A}}}=\frac{T_{1}^{*_{A}}-(T_{1}^{*_{A}})^{*_{A}}}{2i}.$
\end{theorem}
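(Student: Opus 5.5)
The plan is to combine a known commutator-type bound with inequality (\ref{(50)}) of Theorem \ref{Theorem 4.4}. The first step is a bound of Fong--Holbrook/Hirzallah--Kittaneh type in the $A$-setting: for $T_1,T_2,z,Y\in\mathbb{B}_{A}(\mathcal{H})$,
\[
\omega_{A}(T_1zT_2\pm T_2YT_1)\le 2\sqrt{2}\,\|T_2\|_{A}\max\{\|z\|_{A},\|Y\|_{A}\}\,\sqrt{\omega_A^2(T_1)-\tfrac14\|T_1^{*_A}T_1+T_1T_1^{*_A}\|_A}.
\]
The theorem writes $\|T_2\|$; I will work with $\|T_2\|_A$ throughout and read the statement's norm that way.

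To derive this bound, I first normalize so that $\max\{\|z\|_A,\|Y\|_A\}\le 1$. Then I take $\|x\|_A=1$ and $\theta\in\mathbb{R}$ and use $\omega_A(Z)=\sup_\theta\|\Re_A(e^{i\theta}Z)\|_A$. The aim is to write $\Re_A(e^{i\theta}(T_1zT_2\pm T_2YT_1))$ in terms of $\Re_A(e^{i\theta}T_1)$ and $\Im_A(e^{i\theta}T_1)$. The cleanest route is probably to prove the two-operator version $\omega_A(T_1X\pm XT_1)\le 2\sqrt{2}\,\|X\|_A\,\sqrt{\omega_A^2(T_1)-\frac14\|T_1^{*_A}T_1+T_1T_1^{*_A}\|_A}$. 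This follows the classical argument for $\|TX-XT\|$ via $\|\Re T\|$, $\|\Im T\|$ and the identity $\|\Re_A(e^{i\theta}T_1)\|_A^2+\|\Im_A(e^{i\theta}T_1)\|_A^2\le 2\big(\omega_A^2(T_1)-\tfrac14\|T_1^{*_A}T_1+T_1T_1^{*_A}\|_A\big)$ optimized over $\theta$.

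I would then reduce the four-operator expression to this form with the $2\times2$ operator-matrix trick on $\mathbb{H}=\mathcal{H}\oplus\mathcal{H}$ with weight $\mathbb{A}$. Setting $\mathbb{T}=\begin{bmatrix}T_1&0\\0&T_1\end{bmatrix}$ and $\mathbb{X}=\begin{bmatrix}0&z\\ \pm Y&0\end{bmatrix}$ doesn't quite work, so I would instead use $\mathbb{X}=\begin{bmatrix}0&zT_2\\T_2Y&0\end{bmatrix}$-type block embeddings. The point is to check that $\omega_{\mathbb{A}}$, $\|\cdot\|_{\mathbb{A}}$ and $c_{\mathbb{A}}$ of diagonal matrices reduce to those of $T_1$.

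Once the Hirzallah--Kittaneh-type bound is in place, I substitute (\ref{(50)}) applied to $S=T_1$. Since $\|T_1^{*_A}T_1+T_1T_1^{*_A}\|_A=\|S^{*_A}(S^{*_A})^{*_A}+(S^{*_A})^{*_A}S^{*_A}\|_A$, (\ref{(50)}) gives
\[
\tfrac14\|T_1^{*_A}T_1+T_1T_1^{*_A}\|_A\le \omega_A^2(T_1)-\tfrac14\big(c_A^2(P-Q)+c_A^2(P+Q)+|\cdots|\big),
\]
with $P=\Re T_1^{*_A}$ and $Q=\Im T_1^{*_A}$. Hence
\[
\omega_A^2(T_1)-\tfrac14\|T_1^{*_A}T_1+T_1T_1^{*_A}\|_A\le \alpha^2,
\]
where the direction of the inequality comes out right because subtracting a smaller quantity leaves a larger remainder. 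Monotonicity of the square root then yields (\ref{(57)}).

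The main obstacle is the first step: establishing the $A$-weighted commutator bound with the constant $2\sqrt2$ and the factor $\sqrt{\omega_A^2-\frac14\|\cdot\|_A}$. The difficulty lies in handling the $\theta$-rotation. For this step I would first try to cite or adapt the known $A$-version from the literature, which extends Hirzallah--Kittaneh's inequality $\omega(TX\pm XT)\le 2\sqrt2\|X\|\sqrt{\omega^2(T)-\frac14\|T^*T+TT^*\|}$, and only afterwards attempt to build it from the block-matrix reduction.
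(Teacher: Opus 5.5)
\textbf{There is a genuine gap, and it sits in the step you flag as the main obstacle.} Writing $X$ for the operator called $z$ in the statement, your intermediate bound
\[
\omega_{A}(T_1XT_2\pm T_2YT_1)\le 2\sqrt{2}\,\|T_2\|_{A}\max\{\|X\|_{A},\|Y\|_{A}\}\sqrt{\omega_A^2(T_1)-\tfrac14\|T_1^{*_A}T_1+T_1T_1^{*_A}\|_A}
\]
is false. So there is no $A$-version in the literature to cite, and the inequality you attribute to Hirzallah--Kittaneh is not valid even for $A=I$.

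Take $A=I$ on $\mathbb{C}^2$, $T_1=\begin{bmatrix}0&1\\0&0\end{bmatrix}$, $T_2=I$ and $X=Y=T_1^{*}$. Then $\omega^2(T_1)=\frac14=\frac14\|T_1^*T_1+T_1T_1^*\|$, so your right-hand side is $0$. But $T_1T_1^*+T_1^*T_1=I$ has numerical radius $1$. Your auxiliary estimate $\|\Re_A(e^{i\theta}T_1)\|_A^2+\|\Im_A(e^{i\theta}T_1)\|_A^2\le 2\big(\omega_A^2(T_1)-\frac14\|T_1^{*_A}T_1+T_1T_1^{*_A}\|_A\big)$ fails for the same operator: the left side is $\frac12$ for every $\theta$ and the right side is $0$.

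Your final step is also a non sequitur. Inequality (50) says $\frac14\|T_1^{*_A}T_1+T_1T_1^{*_A}\|_A\le\alpha^2$. That bounds $\omega_A^2(T_1)-\frac14\|T_1^{*_A}T_1+T_1T_1^{*_A}\|_A$ from \emph{below} by $\omega_A^2(T_1)-\alpha^2$, not from above by $\alpha^2$. The inequality you assert there is equivalent to the bracketed $c_A$-expression in $\alpha$ being at most $\|T_1^{*_A}T_1+T_1T_1^{*_A}\|_A$. That happens to be true, but it needs its own argument, and (50) does not supply it.

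\textbf{What replaces your first step} is a much cruder and correct estimate, with no $\theta$-rotation and no block matrices. For $\|x\|_A=1$ and $\|X\|_A,\|Y\|_A\le1$,
\[
|\langle (T_1X\pm YT_1)x,x\rangle_A|\le|\langle Xx,T_1^{*_A}x\rangle_A|+|\langle T_1x,Y^{*_A}x\rangle_A|\le\|T_1^{*_A}x\|_A+\|T_1x\|_A\le\sqrt2\big(\|T_1^{*_A}x\|_A^2+\|T_1x\|_A^2\big)^{1/2}.
\]
The last sum equals $\langle (T_1T_1^{*_A}+T_1^{*_A}T_1)x,x\rangle_A$, so $\omega_A(T_1X\pm YT_1)\le\sqrt2\,\|T_1^{*_A}T_1+T_1T_1^{*_A}\|_A^{1/2}$. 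Now use (50) in the direction in which it actually holds, $\|T_1^{*_A}T_1+T_1T_1^{*_A}\|_A\le 4\alpha^2$, to get the bound $2\sqrt2\,\alpha$.

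To finish, normalize $X,Y$ and replace them by $XT_2$ and $T_2Y$, using $\|XT_2\|_A\le\|X\|_A\|T_2\|_A$. This is the paper's argument, and it supports your reading of $\|T_2\|$ as $\|T_2\|_A$. The point to take away is that $\alpha$ is an upper bound for $\frac12\|T_1^{*_A}T_1+T_1T_1^{*_A}\|_A^{1/2}$ itself. It is not a refinement of $\sqrt{\omega_A^2(T_1)-\frac14\|T_1^{*_A}T_1+T_1T_1^{*_A}\|_A}$.
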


\begin{proof}
Let us assume that $\|X\|_{A}\le1$ and  $\|Y\|_{A}\le1.$ Let $z\in{\mathcal{H}}$ where $\|z\|_{A}=1.$

Then we have
\begin{equation*}
\begin{aligned}
|\langle (T_1X\pm YT_1)z,z \rangle_{A}|&\le |\langle T_1X z,z \rangle_{A}|+ |\langle YT_1 z,z \rangle_{A}| \\&= |\langle X z,T_{1}^{*_{A}}z \rangle_{A}|+ |\langle T_1 z,Y^{*_{A}}z \rangle_{A}| \\&\le \|T_{1}^{*_{A}}z\|_{A} + \|T_1z\|_{A}\\&\le\sqrt{2}\sqrt{\|T_{1}^{*_{A}}z\|^2_{A} + \|T_1z\|^2_{A}}\\&=\sqrt{2}\sqrt{\langle (T_{1}^{*_{A}})^{*_{A}}T_{1}^{*_{A}}z,z \rangle_{A}+\langle T_{1}^{*_{A}} T_1 z,z \rangle_{A} }\\&\le\sqrt{2}\|(T_{1}^{*_{A}})^{*_{A}}T_{1}^{*_{A}}+T_{1}^{*_{A}} T_1\|_{A}^{\frac{1}{2}}\\&=\sqrt{2}\|(T_{1}^{*_{A}})^{*_{A}}T_{1}^{*_{A}}+T_{1}^{*_{A}}(T_{1}^{*_{A}})^{*_{A}}\|_{A}^{\frac{1}{2}}\\&\text{ (since $\|S^{*_{A}}\|_{A}=\|S\|_{A}$ and $((S^{*_{A}})^{*_{A}})^{*_{A}}=S^{*_{A}}$ for any $S\in {\mathbb{B}_{A}(\mathcal{H})} )$}\\&\le 2\sqrt{2}\alpha \text{\hspace{0.7cm}( by (\ref{(50)})  )}.
\end{aligned}   
\end{equation*}

By considering the supremum over $\|z\|_{A}=1,$ we get 
\begin{equation}\label{(58)}
\omega_{A}(T_1X\pm YT_{1})  \le 2\sqrt{2}\alpha.  
\tag{58}\end{equation}
For the general case, we choose $X ,Y$ as arbitrary operators. If $\max\{\|X \|_{A},\|Y\|_{A}\}=0$ then Theorem holds trivially. If $\max\{\|X\|_{A},\|Y\|_{A}\}\ne0$ then it is obivious that $\left\|\frac{X }
{\max\{\|X\|_{A},\|Y\|_{A}\}}\right\|_{A}\le 1$ and $\left\|\frac{Y}{\max\{\|X \|_{A},\|Y\|_{A}\}}\right\|_{A}\le 1.$

By setting $X=\frac{X}{\max\{\|X\|_{A},\|Y\|_{A}\}}$ and $Y=\frac{Y}{\max\{\|X\|_{A},\|Y\|_{A}\}}$ in (\ref{(58)}), respectively we arrive at the following inequality:
\begin{equation*}
 \omega_{A}(T_1X\pm YT_{1})  \le 2\sqrt{2} \max\{\|X\|_{A},\|Y\|_{A}\}\alpha.   
\end{equation*}
Now, substituting $X$ by $XT_{2}$ and $Y$ by $T_{2}Y$ in the above inequality, we finally obtain the required inequality (\ref{(57)}).
\end{proof}

The next corollary directly comes from  Theorem \ref{Theorem 4.5}.

\begin{corollary}\label{Corollary 4.6}
Let $T_1,T_2\in {\mathbb{B}_{A}(\mathcal{H})}.$ Then
\begin{equation}\label{(59)}
 \omega_{A}(T_1T_2\pm T_2T_1)\le 2\sqrt{2}\min\{h_{A}(T_1,T_2),h_{A}(T_2,T_1)\}   
\tag{59}\end{equation}
where
$h_{A}(T_1,T_2)=\|T_2\|_{A}\alpha$
and \begin{equation}
\begin{aligned}
\alpha=&\Bigg[\omega_{A}^2(T_1)-\frac{1}{4}\Bigg(c_{A}^2(\Re{T_{1}^{*_{A}}}-\Im{T_{1}^{*_{A}}})+c_{A}^2(\Re{T_{1}^{*_{A}}}+\Im{T_{1}^{*_{A}}})  +\Big|\|\Re{T_{1}^{*_{A}}}+\Im{T_{1}^{*_{A}}}\|_{A}^2-\\&\|\Re{T_{1}^{*_{A}}}-\Im{T_{1}^{*_{A}}}\|_{A}^2+c_{A}^2(\Re{T_{1}^{*_{A}}}-\Im{T_{1}^{*_{A}}})-c_{A}^2(\Re{T_{1}^{*_{A}}}+\Im{T_{1}^{*_{A}}})\Big|\Bigg)\Bigg]^{\frac{1}{2}}. \end{aligned}\notag
\end{equation}\end{corollary}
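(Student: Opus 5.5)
The plan is to specialize Theorem \ref{Theorem 4.5} directly. Take the middle operators there to be the identity, i.e. replace $z$ and $Y$ in (\ref{(57)}) by $I$. The identity lies in $\mathbb{B}_{A}(\mathcal{H})$ with $I^{*_A}=A^{\dag}A$.

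Before substituting, I would redo the last step of the proof of Theorem \ref{Theorem 4.5} with the $A$-seminorm. That proof starts from (\ref{(58)}), $\omega_{A}(T_1X\pm YT_1)\le 2\sqrt{2}\max\{\|X\|_{A},\|Y\|_{A}\}\alpha$. It then substitutes $X\mapsto XT_2$ and $Y\mapsto T_2Y$, and uses submultiplicativity $\|XT_2\|_{A}\le\|X\|_{A}\|T_2\|_{A}$ on $\mathbb{B}_{A}(\mathcal{H})$. This gives the factor $\|T_2\|_{A}$; the $\|T_2\|$ printed in (\ref{(57)}) should be read this way, matching $h_{A}(T_1,T_2)=\|T_2\|_{A}\alpha$. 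For our purpose it is cleanest to apply (\ref{(58)}) directly with $X=Y=T_2$. This gives
\[
\omega_{A}(T_1T_2\pm T_2T_1)\le 2\sqrt{2}\,\|T_2\|_{A}\,\alpha = 2\sqrt{2}\,h_{A}(T_1,T_2),
\]
with $\alpha=\alpha(T_1)$ exactly as in the statement. This route avoids needing $\|I\|_{A}\le 1$, although that also holds, since $\|I\|_{A}=1$ when $A\neq 0$.

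Next I would interchange the roles of $T_1$ and $T_2$. The operator $T_2T_1\pm T_1T_2$ equals $T_1T_2\pm T_2T_1$ in the $+$ case. In the $-$ case it equals $-(T_1T_2-T_2T_1)$, and $\omega_{A}(-S)=\omega_{A}(S)$. Hence the same bound holds with $h_{A}(T_2,T_1)=\|T_1\|_{A}\alpha(T_2)$, where $\alpha(T_2)$ is defined from $\Re T_2^{*_A}$ and $\Im T_2^{*_A}$. Taking the minimum of the two bounds yields (\ref{(59)}).

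The only delicate point is the norm bookkeeping in the first step. We must use $\|\cdot\|_{A}$, together with submultiplicativity for operators in $\mathbb{B}_{A}(\mathcal{H})$ as recalled in the introduction, and not the operator norm. We must also read $\alpha$ in the symmetric case as depending on the first factor. Everything else is substitution.
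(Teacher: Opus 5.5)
Your proposal is correct and follows the paper's proof. The paper sets the middle operators equal to $I$ in (\ref{(57)}) to get $\omega_{A}(T_1T_2\pm T_2T_1)\le 2\sqrt{2}\,h_{A}(T_1,T_2)$ and then interchanges $T_1,T_2$. Plugging $X=Y=T_2$ into the rescaled form of (\ref{(58)}) is the same computation with the substitution step collapsed, and your reading of $\|T_2\|$ in (\ref{(57)}) as $\|T_2\|_{A}$ and your handling of the minus sign via $\omega_{A}(-S)=\omega_{A}(S)$ are both right; the paper leaves these points implicit.
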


\begin{proof}
By considering $X=Y=I$ in Theorem  , we get
\begin{equation}\label{(60)}
  \omega_{A}(T_1T_2\pm T_2T_1)\le 2\sqrt{2} h_{A}(T_1,T_2).  
\tag{60}\end{equation}
Now by interchanging $T_1$ and $T_2$ in  , we get
\begin{equation}\label{(61)}
  \omega_{A}(T_1T_2\pm T_2T_1)\le 2\sqrt{2} h_{A}(T_2,T_1).  
\tag{61}\end{equation}
Combining the inequalities (\ref{(60)}) and  (\ref{(61)}) we arrive at the desired inequality (\ref{(59)}).
\end{proof}

\begin{remark}
(a) The Corollary \ref{Corollary 4.6}  refines the inequality which is proved in [\citenum{zamani2019numerical}, Theorem 4.2] and [\citenum{feki2021some}, Corollary 2.17], namely

\begin{equation*}
 \omega_{A}(T_1T_2\pm T_2T_1)\le 2\sqrt{2}\min\left\{\|T_1\|_{A}\omega_{A}(T_2), \|T_2\|_{A}\omega_{A}(T_1) \right\}.   
\end{equation*}
(b) The inequality (\ref{(50)}) improves the lower bound of (\ref{(4)}).
\end{remark}

\textbf{Funding} The first author expresses gratitude to the University Grants Commission (UGC), Government of India, for providing financial assistance in the form of senior research fellowship.

\textbf{Data availability} Not applicable.

\textbf{Declarations}

\textbf{Conflict of interest } There are no conflicting interests, according to the authors.

\bibliography{sn-bibliography}

\end{document}